\newcommand{\com}[1]{{\color{green} #1}}
\newcommand{\old}[1]{{\color{red} #1}}
\newtheorem{thm}{Theorem}[section]
\newtheorem{cor}[thm]{Corollary}
\newtheorem{lem}[thm]{Lemma}
\newtheorem{prop}[thm]{Proposition}
\theoremstyle{definition}
\newtheorem{dfn}[thm]{Definition}
\newtheorem{rem}[thm]{Remark}
\newtheorem{ques}[thm]{Question}
\newtheorem{prob}[thm]{Problem}
\newtheorem{conv}[thm]{Convention}
\newtheorem{ex}[thm]{Example}
\newtheorem{fact}[thm]{Fact}
\newtheorem{claim}{Claim}
\newtheorem*{claim*}{Claim}
\newtheorem{set}[thm]{Setup}
\theoremstyle{remark}
\numberwithin{equation}{thm}
\def\c{\mathfrak{c}}
\def\depth{\operatorname{depth}}
\def\Ext{\operatorname{Ext}}
\def\Hom{\operatorname{Hom}}
\def\I{\operatorname{\mathrm{I}}}
\def\T{\operatorname{\mathrm{T}}}
\def\m{\mathfrak{m}}
\def\n{\mathfrak{n}}
\def\tr{\operatorname{tr}}
\newcommand{\rme}{\mathrm{e}}
\newcommand{\fkc}{\mathfrak{c}}
\newcommand{\fkm}{\mathfrak{m}}
\newcommand{\fkn}{\mathfrak{n}}
\def\ol{\overline}
\def\rank{\operatorname{\mathrm{rank}}}
\def\Ref{\mathrm{Ref}}
\def\RT{\mathrm{RT}}
\def\Im{\operatorname{\mathrm{Im}}}
\def\Ker{\operatorname{\mathrm{Ker}}}
\begin{document}
%\allowdisplaybreaks
%\setlength{\baselineskip}{14.89pt}
\title[Set of trace ideals]{The set of trace ideals of curve singularities}
\author{Toshinori Kobayashi}
\address{Toshinori Kobayashi: School of Science and Technology, Meiji University, 1-1-1 Higashi-Mita, Tama-ku, Kawasaki-shi, Kanagawa 214-8571, Japan}
\email{toshinorikobayashi@icloud.com}

\author{Shinya Kumashiro}
\address{Shinya Kumashiro: National Institute of Technology, Oyama College
771 Nakakuki, Oyama, Tochigi, 323-0806, Japan}
\email{skumashiro@oyama-ct.ac.jp}

\thanks{2020 {\em Mathematics Subject Classification.} Primary: 13C05, Secondary: 13A15, 16S36}
\thanks{{\em Key words and phrases.} trace ideal, reflexive ideal, integrally closed ideal, curve singularity}

\thanks{Kobayashi was partly supported by JSPS Grant-in-Aid for JSPS Fellows 21J00567. Kumashiro was supported by JSPS KAKENHI Grant Number JP21K13766.}

%\address{}
%\email{}

\begin{abstract}
This paper mainly focuses on commutative local domains of dimension one. We then obtain a criterion for a ring to have a finite number of trace ideals in terms of integrally closed ideals. We also explore properties of such rings related to birational extensions, reflexive ideals, and reflexive Ulrich modules. Special attention is given in the case of numerical semigroup rings of non-gap four. We then obtain a criterion for a ring to have a finite number of reflexive ideals up to isomorphism. Non-domains arising from fiber products are also explored.
\end{abstract}

\maketitle
%\tableofcontents
%%%%%%%%%%%%%%%%%%%%%%%%%%%%%%%%%%%%%%%%%%%%%%%%%%

\section{Introduction}\label{section0}
Classification of ideals is one of the most classical problems in commutative ring theory.
It has been studied at least since the works
of Dedekind on rings of algebraic numbers.
If we consider a Dedekind domain, its ideal class group classifies the isomorphism classes of ideals.
If the considered ring is not integrally closed, the situation becomes more complicate.
For a one--dimensional local ring, classification of ideals relates to representations of the ring.
Actually, the result by Greuel and Kn\"orrer \cite{GK} shows that a one--dimensional Cohen--Macaulay local ring satisfying some mild assumptions has a finite number of isomorphism classes of ideals exactly when it is of finite representation type (see also \cite[Theorem 4.13]{lw}).
Here we say that a one--dimensional local ring is \textit{of finite representation type} if it has only finitely many torsion--free modules up to isomorphism (this definition is not the usual one, but equivalent to it under our assumption; see \cite{lw} for details).

In this paper, we study isomorphism classes of ideals in rings which are not necessary of finite representation type.
We then focus on a special class of ideals, namely, trace ideals.
Let us recall the definition of them.
Let $R$ be a commutative Noetherian local ring. The {\it trace ideal} of an $R$-module $M$ is defined to be the ideal
\begin{align*} 
\tr_R(M)=\sum_{f\in \Hom_R(M, R)} \Im f .
\end{align*}
%We note that $\tr_R(M)$ is equal to the image of the evaluation homomorphism $\mathrm{ev}\colon M\otimes_R \Hom_R(M,R) \to R$ given by the assignment $x\otimes f \mapsto f(x)$.
Then an ideal $I$ in $R$ is called a {\it trace ideal} if $I=\mathrm{tr}_R(M)$ for some $R$-module $M$.
While the notion of trace ideals has long been used as a technical tool in commutative algebra, it itself has gained new attention in recent years \cite{dl, f, hr, kt, li}.
We should also mention recent use of trace ideals to develop theory of rings which are close to Gorenstein \cite{dkt, HHS, HKS2}.
%The notion of trace ideals have gained attention in recent years to study several classes of rings, for examples, Arf rings, nearly Gorenstein rings, and far-flung Gorenstein rings (\cite{HKS2, HHS, Lipman}).

One of the advantages to study trace ideals can be explained by a simple fact: if $I$ and $J$ are distinct trace ideals of a ring $R$, then they are non-isomorphic (see \cite[Corollary 1.2(a)]{hr} for example).
By this fact, to see how many non-isomorphic trace ideals there are, we only need to know what is the set of trace ideals.
We should mention a previous study \cite{GIK} on the set of trace ideals.
As a particular question, the following is raised naturally and explored in several papers:
%The notion of trace ideals is studied in arbitrary rings, but it may work most powerfully in dimension one. For instance, we can define canonical maps among the sets of trace ideals and other collections (sets of birational extensions and stable ideals) which are well--considered in the study of rings of dimension one (\cite{GIK}).
%\old{It is also known that each reflexive module $M$ can be regarded as a $\Hom_R(\tr_R(M), \tr_R(M))$-module.}
%Also, trace ideals of reflexive modules are well--behaved; if $M$ is a reflexive $R$-module, then it can be regarded as a module over the endomorphism algebra of $\tr_R(M)$; see \cite[Proposition 2.4]{ik} for example (cf. \cite[(7.2) Proposition]{b}).
%The notion of trace ideals have gained attention in recent years to study the non-Gorenstein locus of $R$ and birational extensions. One can find ...
%Trace ideals are also useful to study reflexive modules, for instance, each reflexive $R$-module $M$ can be regarded as a $\Hom_R(\tr_R(M), \tr_R(M))$-module. In addition, several class of rings are characterized by the ubiquity of trace ideals (\cite{KobayashiTakahashi, GIK}) and are defined by using the (special) trace ideal (\cite{HHS, HKS2}).
%With these results, it has been asked how many trace ideals there are. In particular, the finiteness of trace ideals is explored in several papers:

\begin{ques} (\cite[Question 3.7]{f}, \cite[Question 7.16(1)]{dms},\cite{hr,k}) \label{q}
When does a Noetherian local ring have a finite number of trace ideals?
%When is $\T(R)$ finite? \com{(def of T(R)?)}
\end{ques}

In \cite{k}, the second author proved that if a local domain $R$ has a finite number of trace ideals, then $\dim R \le 1$ and $R$ is analytically unramified (\cite[Lemma 2.4 and Theorem 2.6]{k}). 
In the case of dimension one, it is also proven that analytically irreducible Arf local domains have a finite number of trace ideals (\cite[Corollary 5.5]{k}).
Here we refer to the paragraph before Corollary \ref{cor29} for the definition of Arf rings.
Note that the notion of Arf rings originates from a classification of certain singular points of plane curves (\cite{Lipman}).
We also remark that, under some suitable assumptions,
a Gorenstein local ring of dimension one has a finite number of trace ideals if and only if it is a ring of finite representation type \cite{dms,hr}.
However, other than Arf rings and rings of finite representation type, only few examples of rings having a finite number of trace ideals are known. 

Due to the previous results, we mainly deal with analytically irreducible local domains of dimension one.
Then our first aim is to give a complete answer to Question \ref{q} by assuming some mild conditions.
Let $(R,\m,k)$ be analytically irreducible local domains of dimension one.
Then the integral closure $\ol{R}$ of $R$ in the total ring of fraction $Q(R)$ of $R$ is finitely generated as an $R$-module and a local ring. Suppose that the canonical map $k \to \ol{R}/\n$, where $\n$ is the maximal ideal of $\ol{R}$, is an isomorphism (for instance, this fulfills if $k$ is algebraically closed). Let $\fkc=R:\ol{R}$ denote the {\it conductor} of $R$, where the colon is considered in $Q(R)$. Set $n=\ell_R(R/(R:\ol{R}))$, where $\ell_R(*)$ denotes the length. Let $v(x)$ denote the {\it value} of $x\in Q(R)$. 
For $0\le i\le n$, there exists a unique integrally closed ideal $I_i$ such that $\ell_R(R/I_i)=i$. 
%$I_i$ denotes the integrally closed ideal of $R$ with $\ell_R(R/I_i)=i$. 
%let $I_i:=\{r\in R\mid v(r)\ge a_i\}$ denote integrally closed ideals. 
Let 
\[ 
\T(R)=\{\text{nonzero trace ideals of a domain $R$}\}.
%\T(R)=\{\text{regular trace ideals of $R$}\}.
\]
With these notations and assumptions, we obtain a criterion for a ring to have finite number of trace ideals. 
%We denote by $v\colon Q(R)\to\mathbb{Z}\cup\{\infty\}$ the normalized valuation associated to $\ol{R}$. 
%
%we attack the following question posed in \cite{f}; see also \cite{dms,hr,k}.
%\com{(``ideal in $R$'' is better?), use "curve singularities"?}

%%%Arfといくつかの具体例以外ほとんど知られていない。
%domainについて焦点をあてる。すると[Kumashiro]より一次元解析的不分岐だとわかる。
%なので、この論文では、analyticall irreducibleという仮定の下で(regular)トレースイデアルの集合がいつ有限であるかについて考え、以下のような判定法を与えた。

\begin{thm} {\rm (Theorems \ref{th1} and \ref{t24})} \label{th02}
%Let $(R,\m,k)$ be an analytically irreducible local domain of dimension one, that is, $\ol{R}$ is finitely generated as an $R$-module and a local ring. 
Let $n=\ell_R(R/\fkc)\ge 3$. 
If $k$ is infinite, then the following conditions are equivalent.
	\begin{enumerate}[\rm(1)]
		%\item $R$ has a finite number of trace ideals.
		\item $\T(R)$ is a finite set. 
		\item All nonzero trace ideals are integrally closed ideals and contain the conductor $\fkc$, that is, $\T(R)=\{I_i \mid 0 \le i \le n\}$.
		%\item $\T(R)=\I(R)$.
		\item For each $1\le i \le n-2$, there exists $q_i\in R$ such that $v(q_i)=\min\{v(x)\mid x\in I_i\}$ and $I_iI_{i+2}=q_iI_{i+2}$. 
		%\item For each $1\le i \le n-2$ and each element $q_i\in R$ with $v(q_i)=a_i$, the equality $I_iI_{i+2}=q_iI_{i+2}$ holds.
	\end{enumerate}
If $R$ is a numerical semigroup ring $k[[H]]$ of a numerical semigroup 
\[
H=\{a_0=0<a_1<a_2<\cdots<a_n<a_{n+1}<a_{n+2}<\cdots\}\subseteq \mathbb{N},
\]
then the following is also equivalent to the above conditions.
\begin{enumerate}[\rm(4)] 
\item $a_j+a_{i+1}-a_i\in H$ for all $i\in\{1,\dots,n-2\}$ and $j\in\{i+2,\dots,n\}$.
\end{enumerate}
\end{thm}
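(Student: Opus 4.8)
The plan is to prove the equivalence of (1)--(3) (Theorem \ref{th1}) and then its equivalence with (4) in the semigroup case (Theorem \ref{t24}), via the cycle $(2)\Rightarrow(1)$, $(1)\Rightarrow(2)$, $(2)\Leftrightarrow(3)$, $(3)\Leftrightarrow(4)$. Throughout I use the valuation $v$ of the DVR $\ol R$, write $c$ for the conductor exponent (so $\fkc=\n^{c}$ as an $\ol R$-ideal), and put $a_i=v(I_i)$, so that $I_i=\{x\in R\mid v(x)\ge a_i\}$, $a_n=c$ and $I_n=\fkc$; recall also that for a nonzero fractional ideal $I$ one has $\Hom_R(I,R)=(R:I)$ in $Q(R)$, hence $\tr_R(I)=(R:I)I$, and $I$ is a trace ideal if and only if $(R:I)=(I:I)$. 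The first step is the reduction that pins down the ``expected'' trace ideals: an integrally closed ideal $J$ with $\fkc\subseteq J$ satisfies $J=\{x\in R\mid v(x)\ge v(J)\}$ with $v(J)\le c$ a value of $R$, so $J\in\{I_0,\dots,I_n\}$, while conversely each $I_i$ contains $I_n=\fkc$; moreover each such $J$ is a trace ideal, because for $\alpha\in(R:J)$ we get $\alpha\fkc\subseteq\alpha J\subseteq R$, and since $\alpha\fkc$ is an $\ol R$-ideal contained in $R$ this forces $v(\alpha)\ge 0$, whence $\alpha J\subseteq\{x\in R\mid v(x)\ge v(J)\}=J$ and $\tr_R(J)=J$. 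Thus $\{I_0,\dots,I_n\}\subseteq\T(R)$ unconditionally, condition (2) simply says that \emph{every} nonzero trace ideal is integrally closed and contains $\fkc$, and in particular $(2)\Rightarrow(1)$ is immediate. (This part is routine and presumably already available in the paper or in \cite{k}.)

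The crux is $(1)\Rightarrow(2)$, which I would prove contrapositively: if some nonzero trace ideal $I\ne R$ is not integrally closed, or does not contain $\fkc$, then $\T(R)$ is infinite. Set $B=(R:I)=(I:I)$, a birational extension $R\subseteq B\subseteq\ol R$ having $I$ as an ideal. If $\fkc\not\subseteq I$ one checks first that $B\subsetneq\ol R$ (an $\ol R$-ideal strictly inside $\fkc$ is never a trace ideal, by a short valuation computation) and then tries to exhibit an infinite strictly descending chain of trace ideals lying between $I$ and $\fkc$ --- for instance ideals of the form $(R:C)$ for intermediate rings $B\subseteq C\subseteq\ol R$, or suitable intersections $\alpha I\cap\n^{\ell}$ --- using that below the conductor the value group leaves unbounded room. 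If instead $I$ is not integrally closed, one uses $I\subsetneq\ol I$ together with the fact (from the previous paragraph) that $\ol I$ is a trace ideal, and, starting from the smallest value not attained on $I$, builds a one--parameter family of pairwise distinct trace ideals between $I$ and $\ol I$; this is where $k$ infinite and $n\ge 3$ genuinely enter. \textbf{I expect this to be the main obstacle of the whole theorem}: producing an infinite family of honest \emph{trace} ideals (not merely of ideals) from a single ``bad'' one requires a careful analysis of the rings $(J:J)$ for $R\subseteq J\subseteq\ol R$ and of which ideals $(R:J)$ occur.

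For $(2)\Leftrightarrow(3)$, fix $1\le i\le n-2$ and a minimal reduction $q_i\in I_i$ of $I_i$ (so $v(q_i)=a_i$; it exists since $k$ is infinite), and observe $q_i^{-1}I_iI_{i+2}\supseteq I_{i+2}$, with equality exactly when $q_i^{-1}I_i\subseteq(I_{i+2}:I_{i+2})$; thus (3) is precisely the assertion that $I_iI_{i+2}=q_iI_{i+2}$. Assuming (2), if this failed for some $i$ I would extract from $q_i^{-1}I_iI_{i+2}$ --- or from the ring $(I_{i+2}:I_{i+2})$ and its conductor $(R:(I_{i+2}:I_{i+2}))$ --- a trace ideal wedged strictly between $I_{i+2}$ and $I_{i+1}$, contradicting (2); hence $(2)\Rightarrow(3)$. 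Conversely, assuming (3), one shows any trace ideal $I$ lies in $\{I_0,\dots,I_n\}$: comparing $I$ with the $I_m$ of equal value and feeding in the identities $I_iI_{i+2}=q_iI_{i+2}$, an induction on $v(I)$ forces $\ol I$ to already be a module over $(I:I)$ equal to $I$ and to contain $\fkc$. The point that these ``second--neighbour'' products suffice to propagate to all trace ideals is the technical heart of this equivalence.

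Finally, $(3)\Leftrightarrow(4)$ in the numerical semigroup case is a monomial computation. Here $I_i$ is the monomial ideal with exponent set $H_{\ge a_i}:=H\cap[a_i,\infty)$, the value semigroup of $I_iI_{i+2}$ is $H_{\ge a_i}+H_{\ge a_{i+2}}$ and that of $q_iI_{i+2}$ is $a_i+H_{\ge a_{i+2}}$; since a monomial ideal is determined by its exponent set, (3) for the index $i$ is equivalent to $H_{\ge a_i}+H_{\ge a_{i+2}}=a_i+H_{\ge a_{i+2}}$, i.e.\ to $h_1+h_2-a_i\in H$ for all $h_1\in H_{\ge a_i}$ and $h_2\in H_{\ge a_{i+2}}$. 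As $h_1+h_2-a_i\ge a_{i+2}$ automatically, and the membership is trivial when $h_1=a_i$ or when $h_1+h_2-a_i\ge c$, one reduces to $h_1\in\{a_{i+1},\dots,a_{n-1}\}$ and $h_2\in\{a_{i+2},\dots,a_{n-1}\}$; the subcase $h_1=a_{i+1}$ is exactly (4), giving $(3)\Rightarrow(4)$. For $(4)\Rightarrow(3)$ I would run a downward induction on $i$ (the case $i=n-2$ being vacuous), using the elementary semigroup fact that $a_p+a_q\ge c$ whenever $a_p+a_q\notin\{a_0,\dots,a_{n-1}\}$: this yields $2a_{n-1}-a_i\ge c$ and, more generally, lets one rewrite an instance $h_1=a_m$ with $m\ge i+2$ by peeling off a copy of $a_{i+1}-a_i$, reducing either to a larger index already treated or to a $(4)$-instance. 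The bookkeeping in this last reduction is the only mildly delicate point of Theorem \ref{t24}.
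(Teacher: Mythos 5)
Your architecture (the cycle $(2)\Rightarrow(1)$, $(1)\Rightarrow(2)$, $(2)\Leftrightarrow(3)$, $(3)\Leftrightarrow(4)$, the identification of $\I(R)=\{I_0,\dots,I_n\}$ with a set of trace ideals, and the monomial computation in the semigroup case) matches the paper's in broad strokes, and your argument that each $I_i$ is a trace ideal is correct. But the two steps you yourself flag as ``the main obstacle'' and ``the technical heart'' are exactly where the paper's real content lies, and your proposal does not supply them.

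First, the implication ``$\T(R)$ finite $\Rightarrow$ everything else'' is not obtained in the paper by manufacturing an infinite family out of an arbitrary non--integrally--closed trace ideal, as you propose; that route is harder and you give no construction for it. (Note also that the subcase $\fkc\not\subseteq I$ you set up is vacuous: every regular trace ideal contains the conductor, by \cite[Proposition 2.2]{hr}, which the paper quotes as Fact \ref{fact2.3zz}(1).) Instead the paper proves the contrapositive of $(1)\Rightarrow(3)$: if $I_iI_{i+2}\ne q I_{i+2}$ for some $i$, one chooses $i$ \emph{maximal} with this failure (so that $I_{i+1}I_{i+3}=q'I_{i+3}$ holds at the next level) and shows that the ideals $J_\alpha^{(i)}=(q+\alpha q')+I_{i+2}$ are pairwise distinct trace ideals as $\alpha$ ranges over representatives of $k^\times$ (Proposition \ref{ppp}). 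Verifying that $J_\alpha^{(i)}$ is a trace ideal is a genuine valuation argument resting on Proposition \ref{pp}, which establishes that the identity $I_iI_{i+2}=rI_{i+2}$ is independent of the choice of $r$ with $v(r)=a_i$ and is implied by the single containment $sI_{i+2}\subseteq(q)$ once $I_{i+1}I_{i+3}=sI_{i+3}$ is known. None of this appears in your proposal, and without it neither $(1)\Rightarrow(3)$ nor your $(2)\Rightarrow(3)$ (which needs a non--integrally--closed trace ideal to exist when a product identity fails) is proved.

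Second, the implication $(3)\Rightarrow(2)$ is asserted via ``an induction on $v(I)$ forces $\ol I$ to be equal to $I$,'' but the actual mechanism is concrete and different: for a trace ideal $I$ with minimal value $a_i$, a descending induction shows $q_i^{-1}I_j I\subseteq R$, hence $q_i^{-1}I_j\subseteq R:I=I:I$ and $I_j\subseteq I$ for all $j\ge i+2$; then $\ell_R(I_i/I_{i+2})=2$ leaves only $I=I_i$ or $I=(q_i)+I_{i+2}$, and in the latter case one checks $q_i^{-1}q_{i+1}\in R:I$ so that $q_{i+1}\in I$ and $I=I_i$ after all. Your sketch does not contain this argument. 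Finally, for $(4)\Rightarrow(3)$ your ``peeling off a copy of $a_{i+1}-a_i$'' does not work as a direct semigroup decomposition ($a_m+h_2-a_i=(a_{i+1}+h_2-a_i)+(a_m-a_{i+1})$ fails because $a_m-a_{i+1}$ need not lie in $H$); the paper instead runs a descending induction on $i$ through Proposition \ref{pp}(3)$\Rightarrow$(1), where the hypothesis at level $i+1$ is used as a module identity rather than element by element. In summary: the skeleton is right, but the proposal is missing the two constructions that carry the proof.
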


%この定理からたくさんの具体例を与えることができる。また、さらにいろんな性質がわかる。

As applications, we observe that there are abundant examples of rings having a finite number of trace ideals other than Arf rings (Examples \ref{e43} and \ref{ex36}). 
It is also observed that the finiteness of $\T(R)$ is inherited by that of $\T(I_i:I_i)$.
This can be regarded as an analogue of a characterization of Arf rings by Lipman (\cite[Theorem 2.2]{Lipman}).

By using Theorem \ref{th02}, we also try to understand the set $\Ref(R)$ of isomorphism classes of reflexive modules over a ring $R$.
Here, an $R$-module $M$ is called \textit{reflexive} if the canonical homomorphism $M \to \Hom_R(\Hom_R(M,R),R)$ is an isomorphism.
We remark that reflexive modules play an important role in representation theory of Cohen--Macaulay rings.
We refer to \cite{dms} for brief history of the study of reflexive modules.
In this context, it is natural to ask when $\Ref(R)$ is a finite set.
In this paper, we mainly restrict our attention to reflexive modules of rank one, that is, reflexive ideals.
Such a restriction is inspired by the following theorem given by Dao, Maitra, and Sridhar.
\begin{thm}{\rm (\cite[Propositions 7.3 and 7.9]{dms})} \label{th02_1}
Let $(R,\m,k)$ be a Cohen--Macaulay local ring of dimension one.
Assume $R$ is almost Gorenstein, contains $\mathbb{Q}$, and $k$ is algebraically closed.
Then the following conditions are equivalent.
\begin{enumerate}[\rm(1)]
\item $\Ref(R)$ is a finite set.
\item $R$ has a finite number of reflexive ideals up to isomorphism.
\item $\T(R)$ is a finite set.
\end{enumerate}
\end{thm}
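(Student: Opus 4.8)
The plan is to prove the implications $(1)\Rightarrow(2)$, the equivalence $(2)\Leftrightarrow(3)$, and finally $(2)\Rightarrow(1)$; throughout we restrict to faithful ideals and modules, the non-faithful case being disposed of by an easy separate argument and being vacuous when $R$ is a domain. The implication $(1)\Rightarrow(2)$ is immediate, since the reflexive ideals of $R$ are exactly the rank one reflexive $R$-modules, realized as fractional ideals. For $(2)\Rightarrow(3)$ I would use two facts: a faithful trace ideal $T$ of $R$ is reflexive---for such a $T$ one has $\End_R(T)=R:T$, which is a module-finite birational ring extension, and $T$ is then isomorphic to it (compare \cite{li,dms})---and two distinct trace ideals of $R$ are never isomorphic as $R$-modules (\cite[Corollary~1.2(a)]{hr}, as recalled in Section~\ref{section0}). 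Hence $T\mapsto[T]$ embeds the set of faithful trace ideals into the set of isomorphism classes of reflexive ideals, so $(2)$ forces $\T(R)$ to be finite.

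For $(3)\Rightarrow(2)$ I would pass to the birational ring extensions of $R$. If $I$ is a faithful reflexive ideal and $B=\End_R(I)$, then $B$ is a module-finite birational extension of $R$, and the crucial point---this is where almost Gorensteiness enters---is that $I$ is then an invertible $B$-module; as $B$ is semilocal this forces $I\cong B$, so in particular $B$ is reflexive as an $R$-module. Conversely, any birational ring extension $B$ that is reflexive over $R$ is itself a faithful reflexive ideal with $\End_R(B)=B$. Thus $[I]\mapsto\End_R(I)$ is a bijection between the isomorphism classes of faithful reflexive ideals of $R$ and the birational ring extensions of $R$ that are reflexive as $R$-modules. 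Finally, for such a $B$ one has $\tr_R(B)=R:B$, a trace ideal, from which $B=R:(R:B)$ is recovered; so these extensions embed into $\T(R)$, and the finiteness of $\T(R)$ gives $(2)$. This is essentially \cite[Proposition~7.3]{dms}.

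The implication $(2)\Rightarrow(1)$ is the substantial one: one must pass from rank one reflexive modules to reflexive modules of arbitrary rank. The plan is to exploit the defining short exact sequence $0\to R\to\omega\to C\to 0$ of an almost Gorenstein ring, in which $\omega$ is a canonical module and $C$ is an Ulrich module, together with the hypotheses $\mathbb{Q}\subseteq R$---which is used to split suitable trace maps---and $k$ algebraically closed, which ensures the Krull--Schmidt property and the good behaviour of residue fields along the birational extensions appearing above. One then realizes reflexive $R$-modules as maximal Cohen--Macaulay modules over a single auxiliary ring built from $R$ and $\omega$, such as $\End_R(\omega\oplus R)$, and reduces the finiteness of $\Ref(R)$ to finite representation type of that ring; the latter in turn follows from the finiteness of the set of birational ring extensions of $R$, which by the previous paragraph (compare also Theorem~\ref{th02}) is a consequence of $(3)$, hence of $(2)$. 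I expect this rank reduction to be the main obstacle: it is exactly where the almost Gorenstein hypothesis is indispensable, since for a general one-dimensional Cohen--Macaulay ring $(1)$ and $(2)$ are not equivalent. This step corresponds to \cite[Proposition~7.9]{dms}, and together with the earlier implications it completes the proof.
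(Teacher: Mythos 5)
First, a remark on the comparison you asked for: the paper does not prove Theorem \ref{th02_1} at all --- it is imported verbatim from \cite[Propositions 7.3 and 7.9]{dms} and used only as motivation and as a benchmark for the paper's own results --- so there is no in-paper argument to measure your proposal against. Judged on its own terms, your sketch contains genuine gaps in each nontrivial implication. The most concrete is in (2)$\Rightarrow$(3): you claim every faithful trace ideal $T$ is reflexive because $T\cong\End_R(T)=R:T$. That isomorphism is false. For a trace ideal one has $R:T=T:T$, so $T\cong R:T$ would make $T$ a principal, hence free, module over the ring $S=T:T$; already for the Gorenstein (hence almost Gorenstein) ring $R=k[[t^4,t^5,t^6]]$ and $T=\m$ one computes $R:\m=R+kt^7$, so $\mu_R(R:\m)=2$ while $\mu_R(\m)=3$, and $\m\not\cong R:\m$. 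More tellingly, your argument for (2)$\Rightarrow$(3) never uses the almost Gorenstein hypothesis, yet the implication genuinely fails without it: by Example \ref{ex76}, $k[[t^5,t^6,t^7]]$ has finitely many reflexive ideals up to isomorphism but infinitely many pairwise non-isomorphic trace ideals, so infinitely many trace ideals there are \emph{not} reflexive. The correct statement --- over an almost Gorenstein ring the regular trace ideals are reflexive and, conversely, reflexive ideals are isomorphic to trace ideals --- is precisely the content of \cite[Proposition 7.3]{dms} and is obtained from the almost Gorenstein exact sequence $0\to R\to\omega\to C\to 0$, not from a self-duality of trace ideals.

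The other two directions have similar problems. In (3)$\Rightarrow$(2) the assertion that almost Gorensteinness forces a reflexive ideal $I$ to be invertible over $B=\End_R(I)$ is exactly the crux of that implication and is stated without any mechanism; as written it is a restatement of what must be proved. In (2)$\Rightarrow$(1) your chain ``finiteness of $\Ref(R)$ reduces to finite representation type of $\End_R(\omega\oplus R)$, which follows from the finiteness of the set of birational extensions of $R$'' cannot be right: the almost Gorenstein Arf ring $R=k[[t^4,t^5,t^6,t^7]]$ has finitely many trace ideals and finitely many reflexive modules up to isomorphism (see Corollary \ref{cor29} and the references \cite{ik,d} quoted after Theorem \ref{th02_1}), yet $\mu_R(\ol{R})=4$ forces $R$ to be of infinite representation type; so finiteness of trace ideals, or of birational extensions, does not yield finite representation type of $R$ or of any naturally associated order, and a reduction of (1) to finite representation type aims at a statement that is strictly stronger than what is true. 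The actual rank-reduction in \cite[Proposition 7.9]{dms} proceeds differently (via $I$-Ulrich and self-dual module techniques, where $\mathbb{Q}\subseteq R$ is used to split trace maps). In short: the skeleton (1)$\Rightarrow$(2)$\Leftrightarrow$(3)$ and (2)$\Rightarrow$(1) is reasonable, but each of the three substantive steps rests on an unproved or false intermediate claim.
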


We also note that if $R$ is Arf, then $\Ref(R)$ is a finite set (\cite[Theorem 3.5]{ik} and \cite[Corollary 3.5]{d}).

As a consequence of Theorem \ref{th02}, we deduce that under the same assumption as in Theorem \ref{th02}, $R$ has only finitely many reflexive ideals up to isomorphism (Theorem \ref{cor3.4}) provided that $\T(R)$ is a finite set.
In particular, we verify the implication (3)$\Rightarrow$(2) of Theorem \ref{th02_1} by assuming that $R$ is a domain instead of that $R$ is almost Gorenstein.

%Furthermore, for each ring $R$ such that $\T(R)$ is a finite set, we see that the finiteness, up to isomorphism, of reflexive ideals \old{and reflexive Ulrich modules} (Theorem \ref{} and Proposition \ref{}).

%In the last section, we study the case of $n=4$. 
Special attention is given in the case of $n=4$. By observing Theorem \ref{th02}, we see that all rings $R$ have a finite number of trace ideals if $n\le 3$. On the other hand, it is also known that all rings $R$ have a finite number of reflexive ideals if $n\le 3$ (\cite[Theorem 6.8]{dms}). Hence, the case of $n=4$ is the next step to study the relation between trace ideals and reflexive ideals. In conclusion, we determine conditions under which numerical semigroup ring has finite reflexive ideals for $n=4$ as follows.

\begin{thm}{\rm (Theorem \ref{thm4.1})} \label{thm03}
	Let $R=k[[H]]$ be a numerical semigroup ring of a numerical semigroup 
	\[
	H=\{a_0=0<a_1<a_2<\cdots<a_n<a_{n+1}<a_{n+2}<\cdots\}\subseteq \mathbb{N},
	\] 
	where $k$ is a field. Suppose that $n=4$ and $k$ is infinite. Then the following conditions are equivalent.
	\begin{enumerate}[\rm(1)] 
		\item $R$ has a finite number of reflexive ideals up to isomorphism.
		%\item $\Ref_1(R)$ is finite.
		\item $R$ has a finite number of reflexive trace ideals.
		%\item $\RT(R)$ is finite.
		\item All reflexive ideals are isomorphic to some monomial ideal containing the conductor $\fkc$.
		%\item For all $I\in \Ref_1(R)$, $I$ is isomorphic to some monomial ideal containing $\fkc$.
		\item Either one of the following holds true:
		\begin{enumerate}[\rm(i)] 
			\item $a_2-a_1+a_3\ge a_4$, that is, $\T(R)$ is finite.
			\item $2a_3 - a_1 < a_4$.
		\end{enumerate}
	\end{enumerate}
\end{thm}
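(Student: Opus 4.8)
The plan is to prove the cycle of implications $(3)\Rightarrow(1)\Rightarrow(2)\Rightarrow(4)\Rightarrow(3)$. Recall that $n=4$ means precisely that $H=\{0,a_1,a_2,a_3\}\cup\{a_4,a_4+1,a_4+2,\dots\}$ with $a_4$ the conductor of $H$, so $\fkc=(t^{a_4},t^{a_4+1},\dots)$. Two of the four implications are soft. For $(3)\Rightarrow(1)$: a monomial ideal of $R$ containing $\fkc$ is determined by which of the finitely many monomials $t^{h}$ ($h\in H$, $h<a_4$) it contains, so there are only finitely many such ideals, and (3) then bounds the number of isomorphism classes of reflexive ideals. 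For $(1)\Rightarrow(2)$: distinct trace ideals of $R$ are never isomorphic (see the introduction), so a bound on isomorphism classes of reflexive ideals bounds the number of reflexive trace ideals. Throughout I work in the combinatorial model in which a monomial fractional ideal of $R=k[[H]]$ is recorded by its value set $E\subseteq\mathbb{Z}$, isomorphism corresponds to translation, the colon is $H\ominus E=\{z\mid z+E\subseteq H\}$, a monomial fractional ideal is reflexive exactly when $E=H\ominus(H\ominus E)$, and $\tr_R(M)=M\cdot(R:M)$; I also use the structural remark that the reflexive trace ideals of $R$ are precisely the conductors $(R:B)$ of the birational extensions $R\subseteq B\subseteq\ol{R}$. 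Indeed, if $\fka$ is a reflexive trace ideal then $(R:\fka)=(\fka:\fka)=:B$ is such an extension and $\fka=(R:(R:\fka))=(R:B)$, while conversely $(R:B)$ is a colon, hence reflexive, and equals $\tr_R(B)$ because it is an ideal of $B$.

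For $(4)\Rightarrow(3)$ I treat the two alternatives separately; they are mutually exclusive, since (ii) forces $a_2+a_3-a_1<2a_3-a_1<a_4$, the negation of (i). In case (i), Theorem \ref{th02} applies (we have $n=4\ge 3$ and $k$ infinite) and gives $\T(R)=\{I_i\mid 0\le i\le 4\}$, all monomial and containing $\fkc$; feeding this into the argument behind Theorem \ref{cor3.4} shows that every reflexive ideal of $R$ is isomorphic to one of the $I_i$, which is (3). In case (ii), $2a_3-a_1<a_4$ — equivalently $a_4-a_3>a_3-a_1\ge\max\{a_2-a_1,a_3-a_2\}$ — so there is a long stretch of consecutive gaps just below the conductor; this rigidifies the operation $E\mapsto H\ominus(H\ominus E)$, and after writing down the canonical ideal $K$ of $H$ (whose type is small because $n=4$) one checks that, up to translation, the only reflexive monomial fractional ideals are monomial ideals containing $\fkc$, and moreover that an arbitrary reflexive ideal is forced to be isomorphic to such a monomial one (for instance by first pinning down its trace ideal and then its value filtration). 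This last step is a finite but somewhat delicate case analysis on the mutual positions of $a_1,a_2,a_3$.

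The crux is $(2)\Rightarrow(4)$, which I prove in the contrapositive: if $a_2+a_3-a_1<a_4\le 2a_3-a_1$ (the negation of (4)), then $R$ has infinitely many reflexive trace ideals, equivalently infinitely many distinct conductors $(R:B)$. The mechanism is that $a_2-a_1<a_4-a_3$ places the exponent $a_3+(a_2-a_1)$ at a gap strictly between $a_3$ and $a_4$, while $2a_3-a_1\ge a_4$ keeps $t^{2a_3-a_1}$ inside $\fkc$; this slack leaves a genuine one-parameter modulus, which I exploit using that $k$ is infinite. Concretely, for $\lambda\in k$ I start from the deformed monomial ideal $I_\lambda=(t^{a_1}+\lambda t^{a_2},\,t^{a_3})$, pass to the reflexive ideal $(R:I_\lambda)$ — which turns out to be genuinely non‑monomial for $\lambda\ne 0$ — and then to a reflexive trace ideal attached to $I_\lambda$ (such as a conductor $(R:B_\lambda)$ of a birational extension built from $I_\lambda$, or the trace of $(R:I_\lambda)$), and I show that these ideals are pairwise distinct because the parameter $\lambda$ is read off from the leading terms of the module generators and cannot be absorbed by a unit of $R$. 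Exhibiting a family whose members are simultaneously \emph{visibly} trace ideals, \emph{reflexive}, and pairwise distinct is the main obstacle; once it is in place, assembling $(3)\Rightarrow(1)\Rightarrow(2)\Rightarrow(4)\Rightarrow(3)$ completes the proof.
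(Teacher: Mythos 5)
Your cycle $(3)\Rightarrow(1)\Rightarrow(2)\Rightarrow(4)\Rightarrow(3)$ is exactly the paper's route (in its own numbering, $(1)\Rightarrow(2)\Rightarrow(3)\Rightarrow(4)\Rightarrow(1)$), the two soft implications are handled the same way, and you have even identified the correct critical family $(t^{a_1}+\lambda t^{a_2},t^{a_3})+\fkc$. But the two implications that carry all the content are left as announced intentions rather than proofs, and you say so yourself. For $(2)\Rightarrow(4)$ the missing step is not "assembling" anything: it is the verification that, when $a_2+a_3-a_1<a_4\le 2a_3-a_1$, the ideals $J_\alpha^{(1)}=(t^{a_1}+\alpha t^{a_2},t^{a_3})+\fkc$ are themselves simultaneously trace ideals (this comes from Proposition \ref{ppp}, since $a_2+a_3-a_1\notin H$ forces $I_1I_3\ne t^{a_1}I_3$ while $I_2I_4=t^{a_2}I_4$) \emph{and} reflexive. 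Reflexivity is the genuinely hard point and is where the hypothesis $2a_3-a_1\ge a_4$ enters: the paper produces the explicit element $g=t^{a_3-a_1}(1+x+\cdots+x^\ell)$ with $x=-\alpha t^{a_2-a_1}$, checks $g\in R:I$ precisely because $t^{a_3}g$ has order $2a_3-a_1\ge a_4$, and then reads off from $a_2+a_3-a_1\notin H$ that $I_1\cap(R:g)\subseteq I$, whence $R:(R:I)=I$. Your proposed detour through $(R:I_\lambda)$ or conductors $R:B_\lambda$ of birational extensions does not avoid this computation; to see that those conductors are pairwise distinct you would still have to compute $I_\lambda:I_\lambda$, which is the same work in disguise.

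The implication $(4)(\mathrm{ii})\Rightarrow(3)$ is likewise not reducible to a statement about monomial (value-set) ideals: the whole difficulty is to rule out \emph{non-monomial} reflexive ideals, and translation-invariance of the combinatorial model cannot see them. The paper does this by listing the possible shapes of an ideal containing $\fkc$ when $n=4$ (five cases, two of them two-generated with genuinely transcendental parameters $\alpha,\beta$) and proving, by coefficient computations using $2a_3-a_1<a_4$, that in the parametrized cases either $R:(R:I)=I_1\supsetneq I$ (so $I$ was not reflexive) or, in the single degenerate sub-case $a_1+a_3=2a_2$, $\alpha=-\beta^2$, that $I$ is carried to $(t^{a_1},t^{a_2})+\fkc$ by multiplication by the unit $1+\beta t^{a_2-a_1}$. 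Your phrase "first pinning down its trace ideal and then its value filtration" points in the right direction but does not substitute for this case analysis; in particular the exceptional unit-twist in the degenerate sub-case is easy to miss and would break a naive "read off $\lambda$ from leading terms" argument. As written, the proposal is a correct outline with the two essential verifications missing.
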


As a corollary, we obtain examples of a ring which has infinitely many trace ideals, but has a finite number of reflexive ideals (Example \ref{ex76}).
Note that such examples do not exist when the rings are assumed to be Arf or almost Gorenstein. %For example, our result proves that $k[[t^5, t^6, t^7]]$ is such an example.\com{$\leftarrow$do we need this sentence here?}
%Note that such examples have never known.

Let us explain how we organize this paper. In Section \ref{section1}, we note several lemmas, which we use throughout this paper. In particular, we study an equality $IJ=qJ$ for ideals $I$, $J$ and $q\in I$. Recall that this equality is used to characterize the finiteness of trace ideals  in Theorem \ref{th02}(3).
In Section \ref{newsection2}, we prove Theorem \ref{th02}. In Section \ref{section2}, we apply Theorem \ref{th02} to numerical semigroup rings, and give examples.

The subject of Section \ref{section4} is a little different from that of other sections. According to our results, the case of analytically irreducible domains is well-explored. However, the case of non-domains is left open. Thus, in Section \ref{section4}, we examine the set of trace ideals of fiber products as a trial run. We describe the set of trace ideals containing a non-zerodivisor of fiber product $R_1\times_k R_2$ by those of $R_1$ and $R_2$ (Theorem \ref{th51}). %The result will give a way to construct non-domains having a finite number of trace ideals. 
Section \ref{section5} comes back to the main focus of this paper. We prove that for each ring $R$ having finite trace ideals, $R$ has a finite number of reflexive ideals up to isomorphism. 
We also investigate reflexive Ulrich modules under similar assumptions. In Section \ref{section6} we prove Theorem \ref{thm03}.

\begin{conv}
In the rest of this paper, all rings are commutative Noetherian rings with identity.
Let $R$ be a ring. Then, $Q(R)$ and $\ol{R}$ denote the total ring of fraction of $R$ and the integral closure of $R$, respectively. We denote by $R^\times$ the set of units of $R$.

We say that $I$ is a {\it fractional ideal} if $I$ is a finitely generated $R$-submodule of $Q(R)$ containing a non-zerodivisor of $R$. For fractional ideals $I$ and $J$, we denote by $I:J$ the fractional ideal $\{x\in Q(R) \mid xJ\subseteq I\}$. It is known that an isomorphism $I:J\cong \Hom_R(J, I)$ is given by the correspondence $x \mapsto \hat{x}$, where $\hat{x}$ denotes the multiplication map of $x\in I:J$ (see \cite[Lemma 2.1]{HK}). We say that an ideal $I$ is {\it regular} if $I$ contains a non-zerodivisor of $R$. 
For a finitely generated $R$-module $M$, $\ell_R(M)$ (resp. $\mu_R(M)$, $\rme(M)$) denotes the length of $M$ (resp. the number of minimal generators of $M$, the multiplicity of $M$).
Set
\[ 
\T(R)=\{\text{regular trace ideals of $R$}\}.
\]
Note that $\T(R)$ is precisely the set of nonzero trace ideals if $R$ is a domain.
In addition, if $\ol{R}$ is finitely generated as an $R$-module, then $\fkc=R:\ol{R}$ denotes the {\it conductor} of $R$. %(see for example \cite[Definition 12.0.1]{SwansonHuneke}). %Note that $\fkc$ is the lergest common ideal of $R$ and $\ol{R}$; hence, $\fkc\cong \ol{R}$ if $\ol{R}$ is a discrete valuation ring. 
\end{conv}

\section{Preliminaries}\label{section1}

Let $(R, \fkm)$ be a Cohen-Macaulay local ring of dimension one. %Suppose that $\ol{R}$ is finitely generated as an $R$-module, and let $\fkc=R:\ol{R}$ denote the conductor of $R$. 
The aim of this section is to prepare several lemmas, which are used from the next section onward.

\begin{lem}{\rm (\cite[Corollary 2.2]{GIK})}\label{prelim1}
Let $I$ be a regular ideal of $R$. The following are equivalent.
\begin{enumerate}[\rm(1)] 
\item $I$ is a trace ideal.
\item $(R:I)I=I$.
\item $R:I=I:I$.
\end{enumerate}
\end{lem}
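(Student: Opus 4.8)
The plan is to reduce all three equivalences to the single identity $\tr_R(I)=(R:I)I$, valid for every regular ideal $I$, together with a short chase of the universal containments among $I$, $I:I$, $R:I$ and $(R:I)I$.

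First I would record the identity. Since $I$ is regular, multiplication induces an isomorphism $\Hom_R(I,R)\cong R:I$, $x\mapsto\hat x$ (cf.\ \cite[Lemma 2.1]{HK}): choosing a non-zerodivisor $a\in I$, any $\varphi\in\Hom_R(I,R)$ satisfies $a\varphi(b)=\varphi(ab)=b\varphi(a)$ for all $b\in I$, so $\varphi=\widehat{\varphi(a)/a}$ with $\varphi(a)/a\in R:I$. As $\Im\hat x=xI$, summing over $x\in R:I$ yields $\tr_R(I)=\sum_{x\in R:I}xI=(R:I)I$. Next I would observe that for any ideal $I$ of $R$ one has $I\subseteq (R:I)I$ (because $1\in R:I$), $(R:I)I\subseteq R$ (by definition of $R:I$), and $I:I\subseteq R:I$ (because $I\subseteq R$). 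Hence $(R:I)I$ is an $R$-submodule of $R$ lying between $I$ and $R$, and $(R:I)I=I$ is equivalent to $R:I\subseteq I:I$, which, using $I:I\subseteq R:I$, is equivalent to $R:I=I:I$. This already gives (2)$\Leftrightarrow$(3).

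It remains to treat (1). The implication (2)$\Rightarrow$(1) is immediate from the identity: $(R:I)I=I$ reads $\tr_R(I)=I$, so $I$ is the trace ideal of the module $M=I$. For (1)$\Rightarrow$(2) I would use that trace ideals are fixed by $\tr_R$: if $I=\tr_R(M)=\sum_{f\in\Hom_R(M,R)}\Im f$, corestrict each $f$ to $\tilde f\colon M\to I$ with $\Im\tilde f=\Im f$, so $\sum_f\Im\tilde f=I$; then for every $g\in\Hom_R(I,R)$ we get $\Im g=g\bigl(\sum_f\Im\tilde f\bigr)=\sum_f\Im(g\tilde f)\subseteq\tr_R(M)=I$, whence $\tr_R(I)\subseteq I$, while the inclusion $I\hookrightarrow R$ gives the reverse containment. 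Thus $\tr_R(I)=I$, i.e.\ $(R:I)I=I$ by the identity.

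The computational content is light. The one point deserving care is the isomorphism $\Hom_R(I,R)\cong R:I$ and the resulting description of $\tr_R(I)$, which is exactly where regularity of $I$ enters (the one-dimensional Cohen--Macaulay hypothesis guaranteeing an ample supply of regular ideals); everything else is a routine manipulation of fractional ideals, and I foresee no essential obstacle.
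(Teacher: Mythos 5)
Your proof is correct. The paper does not prove this lemma itself --- it is quoted from \cite[Corollary 2.2]{GIK} --- and your argument is the standard one behind that reference: the identification $\Hom_R(I,R)\cong R:I$ for a regular ideal giving $\tr_R(I)=(R:I)I$, the elementary containment chase showing $(R:I)I=I$ is equivalent to $R:I=I:I$, and the idempotency $\tr_R(\tr_R(M))=\tr_R(M)$ to handle (1)$\Rightarrow$(2). No gaps.
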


\begin{lem}\label{prelim2}
Let $I$ and $J$ be regular trace ideals of $R$ such that $I\subseteq J$. Then, $J:J\subseteq I:I$. In particular, $(J:J)I=I$.
\end{lem}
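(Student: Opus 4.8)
The plan is to reduce everything to Lemma \ref{prelim1} together with the elementary fact that forming colons reverses inclusions. Since $I$ and $J$ are regular trace ideals, Lemma \ref{prelim1} gives $I:I=R:I$ and $J:J=R:J$. So the first assertion $J:J\subseteq I:I$ is equivalent to $R:J\subseteq R:I$. But this last inclusion is immediate: if $x\in Q(R)$ satisfies $xJ\subseteq R$, then from $I\subseteq J$ we get $xI\subseteq xJ\subseteq R$, i.e. $x\in R:I$. Hence $J:J=R:J\subseteq R:I=I:I$.

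For the ``in particular'' part, I would sandwich $(J:J)I$ between $I$ and $I$. On one side, $1\in R\subseteq J:J$ forces $I=RI\subseteq(J:J)I$. On the other side, using the inclusion just proved, $(J:J)I\subseteq(I:I)I$, and by the very definition of the colon fractional ideal $I:I=\{x\in Q(R)\mid xI\subseteq I\}$ we have $(I:I)I\subseteq I$. Combining, $I\subseteq(J:J)I\subseteq(I:I)I\subseteq I$, so all these are equal and in particular $(J:J)I=I$.

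I do not expect a genuine obstacle here; the only points that require a little care are that the characterization $\mathrm{tr}_R(M)$-ideal $\Leftrightarrow$ $R:I=I:I$ in Lemma \ref{prelim1} is being applied to the \emph{regular} ideals $I$ and $J$ (which holds by hypothesis), and that the equality $(I:I)I=I$ genuinely uses both containments, namely $R\subseteq I:I$ for ``$\supseteq$'' and the defining property of $I:I$ for ``$\subseteq$''. Everything else is formal manipulation of fractional ideals inside $Q(R)$.
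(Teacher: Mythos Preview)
Your proof is correct and follows essentially the same route as the paper's: both use Lemma~\ref{prelim1} to rewrite $J:J=R:J$ and $I:I=R:I$, then the inclusion-reversing property of colons, and finally the sandwich $I\subseteq (J:J)I\subseteq (I:I)I=I$. The paper compresses this into two lines, but the logic is identical.
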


\begin{proof}
Since $I\subseteq J$, we have $J:J=R:J\subseteq R:I=I:I$ by Lemma \ref{prelim1}. By noting that $R\subseteq J:J$, we have $I\subseteq (J:J)I \subseteq (I:I)I=I$.
\end{proof}

%\old{In the next section we give a characterization of the finiteness of the set of regular trace ideals by using an equality such as $IJ=qJ$, where $I$ and $J$ are regular ideals of $R$ and $q\in I$ (see Theorem \ref{th1}). Here, we consider the equality $IJ=qJ$ in general statement.}
Next we consider an equality $IJ=qJ$, where $I$ and $J$ are regular ideals of $R$ and $q\in I$. Such an equality plays a key role in our characterization of the finiteness of the set of regular trace ideals given in the next section (see Theorem \ref{th1}).

\begin{lem}\label{lem2.9}
Let $I$ and $J$ be regular ideals of $R$ and $q\in I$ is a non-zerodivisor of $R$. Then $J:I\subseteq q^{-1} J$.
Furthermore, $J:I= q^{-1} J$ if and only if $IJ=qJ$. 
In particular, $I:I\subseteq q^{-1} I$, and $I:I= q^{-1} I$ if and only if $I^2=qI$. 
\end{lem}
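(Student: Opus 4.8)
The plan is to unwind the definition of the colon fractional ideal $J:I=\{x\in Q(R)\mid xI\subseteq J\}$ and to exploit that $q$, being a non-zerodivisor of $R$, is a unit of $Q(R)$, so that multiplication by $q$ is injective on $Q(R)$ and may be cancelled. Along the way I would note that $q^{-1}J$ is a genuine fractional ideal: it is a finitely generated $R$-submodule of $Q(R)$, and it contains a non-zerodivisor, namely $q^{-1}j$ for any non-zerodivisor $j\in J$; hence the asserted equalities are not vacuous.

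First I would prove the inclusion $J:I\subseteq q^{-1}J$. Let $x\in J:I$. By definition $xI\subseteq J$, and since $q\in I$ this gives $qx\in J$, i.e. $x\in q^{-1}J$. This step is immediate. I would then turn to the equivalence $J:I=q^{-1}J\iff IJ=qJ$ and argue both directions. Assume first $IJ=qJ$; it suffices, by the inclusion just proved, to show $q^{-1}J\subseteq J:I$. Given $x\in q^{-1}J$ we have $qx\in J$, hence $q(xI)=(qx)I\subseteq JI=IJ=qJ$, and cancelling the non-zerodivisor $q$ yields $xI\subseteq J$, that is, $x\in J:I$. Conversely, assume $J:I=q^{-1}J$. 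Then $q^{-1}J\subseteq J:I$, so by definition of the colon $(q^{-1}J)I\subseteq J$, and multiplying by $q$ gives $IJ\subseteq qJ$; the reverse inclusion $qJ\subseteq IJ$ holds because $q\in I$. Hence $IJ=qJ$.

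Finally, the three ``in particular'' assertions are obtained by specializing $J=I$ in the three statements above: $I:I\subseteq q^{-1}I$, and $I:I=q^{-1}I$ if and only if $I^2=qI$.

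I do not expect a serious obstacle here; the argument is elementary manipulation of fractional ideals. The only points requiring a modicum of care are that $q^{-1}J$ really is a fractional ideal (so the statement has content), and that the cancellation of $q$ in $q(xI)\subseteq qJ$ is legitimate precisely because $q$ is a non-zerodivisor of $R$, equivalently a unit of $Q(R)$.
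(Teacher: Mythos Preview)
Your proof is correct and follows essentially the same approach as the paper's: both obtain the inclusion $J:I\subseteq q^{-1}J$ by evaluating at $q\in I$, and both reduce the equivalence to the observation that $q^{-1}J\subseteq J:I$ is equivalent to $q^{-1}IJ\subseteq J$, hence to $IJ\subseteq qJ$, hence to $IJ=qJ$. The paper simply strings these together as a chain of biconditionals while you write out the two directions separately, but the content is identical.
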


\begin{proof}
Let $x\in J:I$. Then $qx\in Ix\subseteq J$. It follows that $x\in q^{-1} J$.
Furthermore, 
\begin{align*}
q^{-1} J=J:I \iff  q^{-1} J \subseteq J:I\iff q^{-1} IJ \subseteq I \iff IJ\subseteq qI \iff IJ=qI.
\end{align*}
\end{proof}

\begin{lem}\label{prelim3}
Let $I$ and $J$ be regular ideals of $R$.
Suppose that there exists an element $q\in I$ of $R$ such that $IJ=qJ$.
Then for each regular trace ideal $L$ with $L\subseteq \tr_R(J)$, an equality $IL=qL$ holds.
\end{lem}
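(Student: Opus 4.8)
The plan is to reduce to the case in which $J$ is already a trace ideal, and then to combine the hypothesis $IJ=qJ$ with the containment $L\subseteq\tr_R(J)$ via Lemma~\ref{prelim2}.

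First I would replace $J$ by $J':=\tr_R(J)$. Using the identification $\Hom_R(J,R)\cong R:J$ recalled in the Convention, one has $\tr_R(J)=(R:J)J$, and then the equation carries over:
\[
IJ'=I(R:J)J=(R:J)(IJ)=(R:J)(qJ)=qJ'.
\]
Moreover $J'$ is a regular trace ideal --- a trace ideal by definition, and regular because the inclusion $J\hookrightarrow R$ shows $J\subseteq\tr_R(J)=J'$ with $J$ regular. Since $L\subseteq J'$ already holds by hypothesis, we may assume $J=J'$ henceforth. I expect this absorption step --- that passing to $\tr_R(J)$ preserves the relation $IJ=qJ$ --- to be the one structural point of the argument; the remainder is formal.

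Next I would observe that $q$ is a non-zerodivisor: $IJ$ contains the product of a non-zerodivisor of $I$ with one of $J$, so $qJ=IJ$, and hence $qR$, contains a non-zerodivisor. Thus $q^{-1}$ makes sense in $Q(R)$, and from $IJ=qJ$ we get $q^{-1}IJ=J$, so $q^{-1}aJ\subseteq J$ for every $a\in I$, i.e. $q^{-1}I\subseteq J:J$. Finally I would apply Lemma~\ref{prelim2}: since $L$ and $J$ are regular trace ideals with $L\subseteq J$, it gives $(J:J)L=L$, whence $q^{-1}IL\subseteq(J:J)L=L$, i.e. $IL\subseteq qL$; the reverse inclusion $qL\subseteq IL$ is clear since $q\in I$, so $IL=qL$. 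The only delicate point is the remark that $q$ is a non-zerodivisor, which is what legitimizes inverting $q$; it follows at once from the regularity of $I$ and $J$, so I do not foresee a genuine obstacle.
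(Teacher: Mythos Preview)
Your proof is correct and follows essentially the same approach as the paper's: both first establish $I\,\tr_R(J)=q\,\tr_R(J)$ and then invoke Lemma~\ref{prelim2} to descend to $L$. The only difference is cosmetic --- the paper obtains the first equality via a surjection $J^{\oplus n}\twoheadrightarrow \tr_R(J)$ tensored with $R/(q)$, whereas you compute it directly from the formula $\tr_R(J)=(R:J)J$.
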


\begin{proof}
Note that $q$ is a non-zerodivisor of $R$ since $IJ$ is a regular ideal and $IJ=qJ\subseteq (q)$.
Consider the evaluation map $\mathrm{ev}\colon(R:J)\otimes_R J\to \tr_R(J)$; $x\otimes y \mapsto xy$, where $x\in R:J$ and $y\in J$.
It induces a surjection $J^{\oplus n} \to \tr_R(J)$ for some $n$.
Tensoring $R/(q)$, we have a surjection $(J/qJ)^{\oplus n} \to \tr_R(J)/q\tr_R(J)$.
Since $IJ=qJ$, $J/qJ$ is annihilated by $I$. Hence, $I\tr_R(J)/q \tr_R(J)=0$, that is, $I\tr_R(J)=q \tr_R(J)$. 

Let $L$ be a regular trace ideal $L$ with $L\subseteq \tr_R(J)$. Note that $I\tr_R(J)=q \tr_R(J)$ implies $q^{-1}I\subseteq \tr_R(J):\tr_R(J)$. Then, we obtain that $q^{-1}I\subseteq \tr_R(J):\tr_R(J)\subseteq L:L$ by Lemma \ref{prelim2}. It follows that $q^{-1}IL\subseteq L$; hence, $IL=qL$.
\end{proof}

Next we give a correspondence between certain subsets of $\T(R)$ and $\T(I:I)$ for a pair of ideals $I$ and $J$ with $IJ=qJ$.

\begin{prop} \label{pp1}
Let $I$ be a regular trace ideal of $R$, and let $J$ be a regular ideal of $R$ with $J\subseteq I$.
Suppose that there exists an element $q\in I$ such that $IJ=qJ$. Then 
\[
\{X\in\T(I:I)\mid X\subseteq J:I\}=\{q^{-1}Y\mid Y\in \T(R)\text{ such that }Y\subseteq J\}.
\]
\end{prop}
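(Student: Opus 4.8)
The plan is the following. Set $S=I:I$, a birational extension of $R$; by Lemma \ref{prelim1} we have $R:I=I:I=S$, and I write $\fkd=R:S$, which is the largest $S$-submodule of $R$. I would first collect a few routine facts: $q$ is a non-zerodivisor (since $IJ=qJ$ is a regular ideal contained in $(q)$); from $q\in I$ one gets $qS=q(I:I)\subseteq I(I:I)\subseteq I\subseteq R$, and also $I\subseteq R:S=\fkd$ because $I(I:I)\subseteq I$; by Lemma \ref{lem2.9} the hypothesis $IJ=qJ$ gives $J:I=q^{-1}J$, and multiplying by $I$ shows $q^{-1}J\subseteq I:I=S$, so $J:I$ is a regular ideal of $S$; and $S=I:I\subseteq q^{-1}I\subseteq J:J$ (using Lemma \ref{lem2.9} and $q^{-1}IJ=J$), whence $SJ=J$, while for $Y\in\T(R)$ with $Y\subseteq J\subseteq I$ we get $SY=Y$ from Lemma \ref{prelim2}, so such a $Y$ is an $S$-submodule of $R$. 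The crucial elementary observation is that for every $S$-submodule $N$ of $Q(R)$ one has $R:N=\fkd:N$: the inclusion $\supseteq$ is clear since $\fkd\subseteq R$, and if $zN\subseteq R$ then $zN$ is an $S$-submodule of $R$, hence contained in the largest one, which is $\fkd$. In particular $\fkd:I=R:I=S$.

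For the inclusion $\supseteq$, given $Y\in\T(R)$ with $Y\subseteq J$ I would put $X=q^{-1}Y$; then $X\subseteq q^{-1}J=J:I$ and, by the facts above, $X$ is a regular ideal of $S$, so automatically $\tr_S(X)\supseteq X$. For the reverse inclusion I would compute $\tr_S(X)=(S:X)X=(S:Y)Y$, an ideal of $S$; since $q\in I\subseteq\fkd=R:S$, the product $q\cdot\tr_S(X)$ is an $S$-submodule of $R$ and therefore lies in $\fkd$, so $q(S:Y)\subseteq\fkd:Y=R:Y=Y:Y$ (the last equality by Lemma \ref{prelim1}, as $Y$ is a trace ideal of $R$); hence $q\cdot\tr_S(X)=q(S:Y)Y\subseteq Y$, i.e.\ $\tr_S(X)\subseteq q^{-1}Y=X$. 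Thus $\tr_S(X)=X$ and $X\in\T(S)$.

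For the inclusion $\subseteq$, given $X\in\T(S)$ with $X\subseteq J:I=q^{-1}J$ I would set $Y=qX\subseteq q\cdot q^{-1}J=J$, a regular ideal of $R$ that is an $S$-submodule of $R$. The first step is to check $IX=qX$: from $(q^{-1}I)(q^{-1}J)=q^{-2}IJ=q^{-1}J$ one gets $q^{-1}I\cdot\tr_S(q^{-1}J)=(S:q^{-1}J)(q^{-1}I)(q^{-1}J)=(S:q^{-1}J)(q^{-1}J)=\tr_S(q^{-1}J)$, so $q^{-1}I\subseteq\tr_S(q^{-1}J):\tr_S(q^{-1}J)$; since $X$ and $\tr_S(q^{-1}J)$ are regular trace ideals of $S$ with $X\subseteq q^{-1}J\subseteq\tr_S(q^{-1}J)$, Lemma \ref{prelim2} applied over $S$ gives $\tr_S(q^{-1}J):\tr_S(q^{-1}J)\subseteq X:X$, hence $q^{-1}I\subseteq X:X$, i.e.\ $IX\subseteq qX$; the reverse inclusion is clear since $q\in I$. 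Now, because $Y=qX=IX$ is an $S$-submodule of $R$, the key observation yields $R:Y=\fkd:Y=\fkd:(IX)=(\fkd:I):X=S:X$, using $\fkd:I=S$. Therefore $\tr_R(Y)=(R:Y)Y=(S:X)(qX)=q(S:X)X=q\,\tr_S(X)=qX=Y$, since $X$ is a trace ideal of $S$; so $Y=qX$ is a regular trace ideal of $R$ with $Y\subseteq J$, and $X=q^{-1}Y$ belongs to the right-hand set.

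The step I expect to be the real obstacle is not any single computation but finding this bookkeeping: a head-on attempt to verify, say, $(R:(qX))(qX)=qX$ gets stuck, whereas once one systematically replaces $R:(-)$ by $\fkd:(-)$ on $S$-modules and feeds in the cheap inclusions $qS\subseteq I\subseteq\fkd=R:S$, both containments reduce to one-line colon manipulations. A minor point to watch is that the preliminary lemmas are invoked over $S$; this is harmless because Lemma \ref{prelim1} (which is \cite[Corollary 2.2]{GIK}) and Lemma \ref{prelim2} are purely formal, and the one place where the ``$IJ=qJ$'' mechanism would be used over $S$ degenerates (the relevant element being $1$), so that step is cleanest done by hand as above.
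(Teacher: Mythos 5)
Your proof is correct, and it is organized around a device the paper does not use. The paper's argument for the inclusion $\supseteq$ hinges on Lemma \ref{prelim3}: since $Y\subseteq J\subseteq\tr_R(J)$ is a regular trace ideal, $IY=qY$, which turns $S:q^{-1}Y=(R:I):q^{-1}Y=R:q^{-1}IY$ into $R:Y$ on the nose and gives $(S:q^{-1}Y)q^{-1}Y=(R:Y)q^{-1}Y=q^{-1}Y$ in one line. You replace this by the conductor $\fkd=R:S$ together with the observation that $R:N=\fkd:N$ for every $S$-submodule $N$ of $Q(R)$; this yields only the inclusion $q(S:Y)\subseteq R:Y$ rather than an equality, but that is all you need, so Lemma \ref{prelim3} never enters your $\supseteq$ direction. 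For $\subseteq$ the paper is more economical: from $X=(S:X)X$ it gets $IX=I[(R:I):X]X=(R:IX)IX$ directly, so $IX\in\T(R)$, and $X\subseteq J:I$ gives $q^{-1}IX\subseteq q^{-1}J\subseteq S$, hence $q^{-1}I\subseteq S:X=X:X$ and $IX=qX$ without your detour through $\tr_S(q^{-1}J)$ and Lemma \ref{prelim2} over $S$ (your detour is sound, just longer). Your worry about invoking the preliminary lemmas over $S$ is legitimate but harmless, and the paper incurs the same debt implicitly when it writes $X=(S:X)X$ and $S:X=X:X$ for $X\in\T(S)$: these are formal colon identities for regular fractional ideals and need no locality. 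In sum, the conductor bookkeeping buys you independence from Lemma \ref{prelim3} at the price of a longer verification; the paper's proof is shorter because it lets Lemma \ref{prelim3} do exactly the work your identity $R:N=\fkd:N$ is doing.
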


\begin{proof}
Set $S:=I:I$. Note that $q^{-1}J= J:I\subseteq S$ by Lemma \ref{lem2.9}.

$(\supseteq)$: Let $Y\in \T(R)$ such that $Y\subseteq J$. Then $q^{-1}Y \subseteq q^{-1}J \subseteq S$. On the other hand, since $Y\subseteq I$, $q^{-1}YS=q^{-1}Y$ by Lemma \ref{prelim2}. Hence, $q^{-1}Y$ is an ideal of $S$.
%Then $Y\subseteq I$, and so $q^{-1}Y$ is an ideal of $S$ by Lemma \ref{prelim2}.
Check the equalities
\[
(S:q^{-1}Y)q^{-1}Y=[(R:I):q^{-1}Y]q^{-1}Y=(R:q^{-1}IY)q^{-1}Y=(R:Y)q^{-1}Y=q^{-1}Y,
\]
where the third equality follows from $IY=qY$ by Lemma \ref{prelim3}.
It follows that $q^{-1}Y\in \T(S)$.
By Lemma \ref{prelim3} again, $I(q^{-1}Y)=Y\subseteq J$. Thus, $q^{-1}Y\subseteq J:I$.

$(\subseteq)$: Let $X\in \T(S)$ such that $X\subseteq J:I$.
Then 
\[
IX=I(S:X)X=I[(R:I):X]X=(R:IX)IX.
\]
This means that $IX\in \T(R)$.
Since $X\subseteq J:I$, $q^{-1}IX\subseteq q^{-1}J \subseteq S$.
Hence, $q^{-1}I\subseteq S:X=X:X$.
It follows that $q^{-1}IX\subseteq X$.
Therefore, we obtain that $X=q^{-1}IX$.
\end{proof}

We have some applications of Proposition \ref{pp1}.
First we deal with the case of $I=J$.
Then we obtain the following description of $\T(I:I)$.

\begin{cor} \label{c111}
Let $I$ be a regular trace ideal of $R$.
Assume that there exists an element $q\in I$ such that $I^2=qI$.
Then
\[
\T(I:I)=\{q^{-1}Y\mid Y\in \T(R)\text{ such that }Y\subseteq I\}.
\]
\end{cor}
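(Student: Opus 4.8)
The plan is to obtain this as an immediate specialization of Proposition \ref{pp1} to the case $J=I$. With that choice the standing hypothesis "there exists $q\in I$ with $IJ=qJ$" of Proposition \ref{pp1} becomes precisely the assumption $I^2=qI$ of the corollary, the inclusion $J\subseteq I$ is automatic, and $I$ is a regular trace ideal as required. Hence Proposition \ref{pp1} applies verbatim and produces
\[
\{X\in\T(I:I)\mid X\subseteq I:I\}=\{q^{-1}Y\mid Y\in\T(R)\text{ such that }Y\subseteq I\}.
\]

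The one observation to record is that, for $J=I$, the subset constraint on the left-hand side is vacuous. Indeed $J:I=\{x\in Q(R)\mid xI\subseteq I\}=I:I$, and $I:I$ is exactly the ring over which the trace ideals on the left are being formed; every regular trace ideal of the ring $S:=I:I$ is by definition a subset of $S$. Therefore the left-hand side of the displayed equality is all of $\T(I:I)$, and the equality becomes the claimed description.

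There is no genuine obstacle here: all the work lies in Proposition \ref{pp1}. The only point worth spelling out in the write-up is the identification $J:I=I:I$ when $J=I$, which is what makes the membership condition "$X\subseteq J:I$" degenerate, so that the restricted set of trace ideals of $I:I$ appearing in Proposition \ref{pp1} is the whole of $\T(I:I)$.
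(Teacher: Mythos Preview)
Your proof is correct and matches the paper's approach exactly: the paper simply writes ``We may apply Proposition \ref{pp1} by letting $J=I$.'' Your additional remark that $J:I=I:I$ renders the constraint $X\subseteq J:I$ vacuous is a helpful elaboration of the only point that needs checking.
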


\begin{proof}
We may apply Proposition \ref{pp1} by letting $J=I$.
\end{proof}

%By letting $I=\m$ and assuming that $R$ has minimal multiplicity, we may describe the set $\T(\m:\m)$ in terms of $\T(R)$; see also \cite[Theorem 5.1]{k}.

%\begin{cor}[\cite{k}]
%\end{cor}

Next we consider the case of $\ell_R(I/J)=2$.
If $I:I$ is local, then we get a description of $\T(I:I)$ similar to Corollary \ref{c111}.
Before stating it, we prepare a lemma.

\begin{lem}\label{lem2.10}
Let $I,J$ be regular trace ideals of $R$ such that $J\subseteq I$ and $\ell_R(I/J)=2$.
Assume that $I:I$ is a local ring, and there exists an element $q\in I\setminus J$ such that $IJ=qJ$ and $I^2\not=qI$.
 Then the maximal ideal of $I:I$ is $q^{-1}J$.
\end{lem}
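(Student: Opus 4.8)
The plan is to set $S := I:I$ (which equals $R:I$ by Lemma~\ref{prelim1}) and to locate both $S$ and the candidate maximal ideal $q^{-1}J$ between $q^{-1}J$ and the ambient fractional ideal $q^{-1}I$, then compare $R$-lengths. First I would establish the chain
\[
q^{-1}J \;\subseteq\; S \;\subsetneq\; q^{-1}I .
\]
For the left inclusion: since $IJ=qJ$, Lemma~\ref{lem2.9} gives $J:I=q^{-1}J$, and $J\subseteq I$ forces $J:I\subseteq I:I=S$; moreover $q^{-1}J$ is genuinely an \emph{ideal} of $S$, because $(I:I)J=J$ by Lemma~\ref{prelim2}, so $S\cdot q^{-1}J=q^{-1}J$ (this is where the hypothesis that $J$ is a trace ideal is used). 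For the right inclusion: $q\in I$ is a non-zerodivisor (as $IJ=qJ$ is regular, exactly as in the proof of Lemma~\ref{prelim3}), Lemma~\ref{lem2.9} gives $S\subseteq q^{-1}I$, and this inclusion is \emph{strict} precisely because $I^2\ne qI$.

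Next comes the length count. Multiplication by $q^{-1}$ is an $R$-module isomorphism $I/J\xrightarrow{\sim}q^{-1}I/q^{-1}J$, so $\ell_R(q^{-1}I/q^{-1}J)=\ell_R(I/J)=2$. In the chain above, the step $S\subsetneq q^{-1}I$ already contributes length at least $1$, hence $\ell_R(S/q^{-1}J)\le 1$. For the reverse estimate I would rule out $q^{-1}J=S$: if $q^{-1}J=S=I:I$ then $J=q(I:I)$, whence $IJ=q\,[I(I:I)]=qI$ (since $I(I:I)=I$); comparing with $IJ=qJ$ and cancelling the non-zerodivisor $q$ gives $I=J$, contradicting $\ell_R(I/J)=2$. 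Therefore $\ell_R(S/q^{-1}J)=1$. A length-one $R$-module has no proper nonzero $R$-submodule, hence no proper nonzero $S$-submodule, so $S/q^{-1}J$ is a simple $S$-module and $q^{-1}J$ is a maximal ideal of $S$; since $S$ is local by hypothesis, $q^{-1}J$ is its maximal ideal.

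I do not expect a genuine obstacle: the whole content is the two inclusions $q^{-1}J\subseteq S\subsetneq q^{-1}I$ together with additivity of length along the chain they form. The two points needing a little care are (i) checking that $q^{-1}J$ is an ideal of $S$ and not merely an $R$-submodule, before one may speak of ``the maximal ideal'' — this is where Lemma~\ref{prelim2}, and hence the trace-ideal hypothesis on $J$, enters; and (ii) tracking which hypothesis forces which strict inclusion, namely $I^2\ne qI$ for $S\subsetneq q^{-1}I$ and the nonvanishing of $\ell_R(I/J)$ for $q^{-1}J\subsetneq S$, while the exact value $\ell_R(I/J)=2$ is what pins $\ell_R(S/q^{-1}J)$ down to $1$.
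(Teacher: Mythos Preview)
Your proof is correct and follows essentially the same route as the paper: establish the chain $q^{-1}J\subseteq S\subsetneq q^{-1}I$ via Lemma~\ref{lem2.9}, use $\ell_R(I/J)=2$ to pin down $\ell_R(S/q^{-1}J)=1$, and invoke Lemma~\ref{prelim2} to see $q^{-1}J$ is an $S$-ideal. The only difference is in how you rule out $q^{-1}J=S$: the paper simply observes $1\in S\setminus q^{-1}J$ (since $q\notin J$ by hypothesis), whereas your argument deducing $I=J$ from $qI=qJ$ is correct but slightly more roundabout.
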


\begin{proof}
Set $S:=I:I$.
By our assumption, we have $q^{-1}IJ=J \subseteq I$.
It follows by Lemma \ref{lem2.9} that 
\[
q^{-1}J \subsetneq S \subsetneq q^{-1}I.
\]
Here the first inequality follows by observing $1\in S \setminus q^{-1}J$.
By noting that $\ell_R(q^{-1}I/q^{-1}J)=\ell_R(I/J)=2$, we obtain that $\ell_R(S/q^{-1}J) = 1$.
On the other hand, $q^{-1}J$ is an ideal of $S$ by Lemma \ref{prelim2}.
Hence, $0<\ell_S(S/q^{-1}J)\le \ell_R(S/q^{-1}J)=1$.
This shows that $q^{-1}J$ is the maximal ideal of $S$.
\end{proof}

\begin{cor} \label{c112}
Let $I,J$ be nonzero trace ideals of $R$ such that $J\subseteq I$ and $\ell_R(I/J)=2$.
Assume that $I:I$ is a local ring, and there exists an element $q\in I\setminus J$ such that $IJ=qJ$ and $I^2\not=qI$.
Then
\[
\T(I:I)=\{q^{-1}Y\mid Y\in \T(R) \text{ such that }Y\subseteq J\}\cup\{I:I\}.
\]
\end{cor}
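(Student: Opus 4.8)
The plan is to reduce to Proposition \ref{pp1} and then account for the one extra ideal $I:I$ that appears. Set $S:=I:I$ and let $\n_S$ denote its maximal ideal. By Lemma \ref{lem2.10}, our hypotheses give $\n_S = q^{-1}J$; in particular $S$ is a local ring with a regular maximal ideal, so every regular trace ideal $X$ of $S$ is either equal to $S$ (the unit trace ideal, which is always a trace ideal) or is contained in $\n_S = q^{-1}J = J:I$, using Lemma \ref{lem2.9} for the last equality. Thus
\[
\T(S) = \{S\} \cup \{X \in \T(S) \mid X \subseteq J:I\}.
\]
The second set is described by Proposition \ref{pp1} applied to the pair $J \subseteq I$ with the element $q$ (the hypotheses of that proposition, namely that $I$ is a regular trace ideal, $J$ a regular ideal with $J \subseteq I$, and $IJ=qJ$, are all in force here): it equals $\{q^{-1}Y \mid Y \in \T(R),\ Y \subseteq J\}$. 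Combining the two displays yields the claimed formula.

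The only point that needs a moment of care is the observation that every proper regular trace ideal of the local ring $S$ is contained in its maximal ideal $\n_S$. A regular trace ideal $X$ of $S$ is in particular an ideal of $S$; if $X \ne S$ then $X \subseteq \n_S$ because $S$ is local, and $X$ remains regular. Conversely $S = \tr_S(S)$ is a regular trace ideal, and it does not appear in the Proposition \ref{pp1} list since $S \not\subseteq J:I = \n_S \subsetneq S$; this is why the union in the statement is disjoint in the sense that $I:I$ is genuinely an extra element, though we need not emphasize disjointness. I would also note, for bookkeeping, that each $q^{-1}Y$ with $Y \subseteq J$ is automatically an ideal of $S$ and lies in $\n_S$, which is part of what Proposition \ref{pp1} already delivers.

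The main (and essentially only) obstacle is purely organizational: making sure the hypothesis $I^2 \ne qI$ is used exactly where it is needed, namely inside Lemma \ref{lem2.10} to guarantee $\n_S = q^{-1}J$ rather than $\n_S$ being strictly smaller — if $I^2 = qI$ then $S = q^{-1}I$ and the structure degenerates to Corollary \ref{c111} instead. Since Lemma \ref{lem2.10} is already proved in the excerpt, there is no genuine difficulty; the proof is a two-line assembly: invoke Lemma \ref{lem2.10} for $\n_S = q^{-1}J$, split $\T(S)$ into $\{S\}$ and the ideals inside $\n_S$, and quote Proposition \ref{pp1} for the latter.
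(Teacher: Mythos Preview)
Your proof is correct and follows essentially the same approach as the paper: invoke Lemma \ref{lem2.10} to identify the maximal ideal of $S=I:I$ as $q^{-1}J$, split $\T(S)$ into $\{S\}$ and the proper trace ideals, and apply Proposition \ref{pp1} to describe the latter. The only cosmetic difference is that you use Lemma \ref{lem2.9} to note the equality $q^{-1}J = J:I$, whereas the paper records only the inclusion $q^{-1}J \subseteq J:I \subsetneq I:I$; both routes land on the same application of Proposition \ref{pp1}.
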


\begin{proof}
Since $I:I$ is a local ring with the maximal ideal $q^{-1}J$, it follows that $\T(I:I)=\{X\in \T(I:I)\mid X\subseteq q^{-1}J\}\cup\{I:I\}$.
Note that $q^{-1}J\subseteq J:I\subsetneq I:I$.
So, applying Proposition \ref{pp1}, we see the equality $\{X\in \T(I:I)\mid X\subseteq q^{-1}J\}=\{q^{-1}Y\mid Y\in \T(R) \text{ such that }Y\subseteq J\}$.
\end{proof}

\section{Trace ideals of curve singularities}\label{newsection2}

In this section, let $(R,\m,k)$ be an analytically irreducible local domain of dimension one, that is, $\ol{R}$ is finitely generated as an $R$-module and $\ol{R}$ is a local ring (hence, $\ol{R}$ is a discrete valuation ring). 
We assume that the canonical map $k \to \ol{R}/\n$, where $\n$ is the maximal ideal of $\ol{R}$, is an isomorphism (e.g. $k$ is an algebraically closed field or $R$ is a numerical semigroup ring). With this assumption, we investigate the structure of $\T(R)$.
We use the following notations:

\begin{set} \label{setup}
\begin{enumerate}
%\item $(R,\m,k)$ is an analytically irreducible local domain of dimension one, that is, $\ol{R}$ is finitely generated as an $R$-module and $\ol{R}$ is a discrete valuation ring.
%\item We assume that the canonical map $k \to \ol{R}/\n$, where $\n$ is the maximal ideal of $\ol{R}$, is an isomorphism (e.g. $R/\m$ is an algebraically closed field).
\item $v\colon Q(R)\to\mathbb{Z}\cup\{\infty\}$ denotes the {\it normalized valuation} associated to $\ol{R}$. 
%\item $v\colon Q(R) \to \mathbb{N}\cup\{\infty\}$ denotes the {\it normalized valuation} associated to $\ol{R}$.
\item $v(R)=\{v(r) \mid 0\ne r\in R\}$ denotes the \textit{value semigroup} of $R$. Set $H=v(R)$.
%\item We put $H:=v(R)\subseteq \mathbb{N}$, and call $H$ the \textit{value semigroup} of $R$.
%Note that $H$ is a numerical semigroup.
\item We write $H=\{a_0=0<a_1<a_2<\cdots<a_n<a_{n+1}<a_{n+2}<\cdots\}$. Note that there exists an integer $n$ such that $a_{n+i}=a_{n} + i$ for all $i\ge 0$. We choose such $n$ as small as possible.
%\item We write $H=\{a_0=0,a_1,a_2,\dots,a_n,a_{n+1}=a_n+1,a_{n+2}=a_n+2,\dots\}$, where $n,a_1,\dots,a_n$ are positive integers such that $a_1<a_2<\dots<a_n$.
\end{enumerate}
\end{set}

In addition, let 
\begin{align*}
\T(R)&=\{\text{regular trace ideals of $R$}\} \text{ as previous section, and }\\
\I(R)&=\{\text{integrally closed ideals of $R$ containing $\fkc$}\}.
\end{align*}
By letting $I_i:=\{r\in R\mid v(r)\ge a_i\}$ for all $i\in\{0,1,\dots,n\}$, we obtain that 
\[
\I(R)=\{I_i\mid i=0,\dots,n\}.
\]
Note that $I_0=R$, $I_1=\m$, and $I_n=\c$. 
%For each $i\in\{0,1,\dots,n\}$, set $I_i:=\{r\in R\mid v(r)\ge a_i\}$.
%Then $I_0=R$, $I_1=\m$, $I_n=\c$, and $\mathcal{I}(R)=\{I_i\mid i=0,\dots,n\}$.

\begin{rem}\label{remrem2.2}
\begin{enumerate}[\rm(1)] 
\item Let $r\in R$ be an element such that $v(r)=a_i$, where $0 \le i \le n$. Then the equality $I_i=(r)+I_{i+1}$ holds. In particular, $\ell_R(I_i/I_{i+1})=1$.
\item The integer $n$ appearing in Setup \ref{setup} (3) is equal to $\ell_R(R/\c)$.
Indeed, we have equalities $n=\ell_R(R/I_1)+\ell_R(I_1/I_2)+\cdots+\ell_R(I_{n-1}/I_n)=\ell_R(R/\c)$.
\end{enumerate}
\end{rem}

%%%%%%%%%%%%%%%%%%%%%%%%%

\begin{fact}\label{fact2.3zz}
\begin{enumerate}[\rm(1)] 
\item {\rm (\cite[Proposition 2.2]{hr})}: Let $I$ be a regular trace ideal of $R$. Then,  $I$ contains $\c$.
\item {\rm (\cite[Theorem 1]{c})}: $\I(R)$ is a subset of $\T(R)$.
\item {\rm (\cite[Theorem 6.8]{dms})}: If $n=2$, then $\T(R)=\{R,\m,\c\}=\I(R)$.
\end{enumerate}
\end{fact}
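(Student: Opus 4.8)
The overall plan is to deduce all three parts from the colon criterion of Lemma \ref{prelim1}---a regular ideal $I$ is a trace ideal precisely when $R:I=I:I$---combined with the valuation data of Setup \ref{setup}. Parts (1) and (2) carry the substance, and part (3) is then a short length count. I expect the genuinely delicate point to be controlling $R:I$ for a possibly non-monomial trace ideal $I$ in part (1); the resolution is to pass to $\ol{R}$, where $I\ol{R}$ is principal.

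For part (1), let $I$ be a regular trace ideal and set $S:=R:I=I:I$ by Lemma \ref{prelim1}. Since $\ol{R}$ is a DVR, I would choose $w\in I$ with $v(w)=\min v(I)$, so that $w\ol{R}=I\ol{R}$. Then for any $y\in\fkc$ one computes
\[
(y/w)\,I\subseteq (y/w)\,(I\ol{R})=(y/w)\,(w\ol{R})=y\ol{R}\subseteq\fkc\subseteq R,
\]
where the inclusions $y\ol{R}\subseteq\fkc\subseteq R$ hold because $\fkc$ is an ideal of $\ol{R}$ contained in $R$ and $y\in\fkc$. Hence $y/w\in R:I=I:I$, and therefore $y=(y/w)\,w\in (I:I)\,I=I$. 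As $y$ was arbitrary, $\fkc\subseteq I$. This trick of replacing $I$ by the principal ideal $I\ol{R}$ to pin down $R:I$ is what I regard as the crux of the whole Fact; once it is available the other two parts go quickly.

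For part (2), recall that $\I(R)=\{I_i\mid 0\le i\le n\}$ with $I_i=\{r\in R\mid v(r)\ge a_i\}$ and $\fkc=I_n\subseteq I_i$ for $i\le n$. I would first record the identity $R:\fkc=\ol{R}$: the inclusion $\ol{R}\subseteq R:\fkc$ is clear since $\fkc$ is an $\ol{R}$-ideal inside $R$, while for $x\in R:\fkc$ the set $x\fkc=x\fkc\ol{R}$ is an $\ol{R}$-submodule of $R$, hence lies in the largest such submodule $\fkc$, giving $x\in\fkc:\fkc=\ol{R}$. Now fix $i\le n$ and check Lemma \ref{prelim1} for $I_i$. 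The inclusion $I_i:I_i\subseteq R:I_i$ is automatic from $I_i\subseteq R$. For the reverse, $\fkc\subseteq I_i$ yields $R:I_i\subseteq R:\fkc=\ol{R}$, so every $x\in R:I_i$ has $v(x)\ge 0$; then for $r\in I_i$ we have $xr\in R$ and $v(xr)=v(x)+v(r)\ge a_i$, so $xr\in I_i$ by the very definition of $I_i$ as a valuation ideal. Thus $R:I_i=I_i:I_i$, and each $I_i$ is a trace ideal.

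For part (3), assume $n=2$, so $\I(R)=\{R,\m,\fkc\}$, and by part (2) these three ideals belong to $\T(R)$. For the reverse inclusion, part (1) shows that any regular trace ideal $I$ satisfies $\fkc\subseteq I\subseteq R$. Since $\ell_R(R/\fkc)=n=2$ and $R$ is local, a length count in $R/\fkc$ settles the classification: if $I\supsetneq\fkc$ then $\ell_R(I/\fkc)\ge 1$ forces $\ell_R(R/I)\le 1$, so $I=R$ or $I=\m$ (the unique ideal of colength one), while otherwise $I=\fkc$. Hence $\T(R)\subseteq\{R,\m,\fkc\}$, and combining with part (2) gives $\T(R)=\{R,\m,\fkc\}=\I(R)$. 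The only step requiring any care here is this elementary classification of ideals between $\fkc$ and $R$, which the length identity $\ell_R(R/I)+\ell_R(I/\fkc)=2$ handles directly.
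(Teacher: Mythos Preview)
The paper does not supply its own proof of this Fact; it simply cites \cite[Proposition 2.2]{hr}, \cite[Theorem 1]{c}, and \cite[Theorem 6.8]{dms} for the three parts respectively. Your self-contained argument is correct and provides what the paper omits.

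A few remarks on the individual parts. For (1), your reduction to the principal $\ol{R}$-ideal $I\ol{R}=w\ol{R}$ is clean and works exactly because $\ol{R}$ is a DVR under Setup~\ref{setup}; the chain $(y/w)I\subseteq y\ol{R}\subseteq\fkc\subseteq R$ is valid since $\fkc$ is a common ideal of $R$ and $\ol{R}$. For (2), your verification that $R:I_i=I_i:I_i$ via the containment $R:I_i\subseteq R:\fkc=\ol{R}$ and the valuation description $I_i=\{r\in R\mid v(r)\ge a_i\}$ is precisely the mechanism behind Constapel's result in this setting. For (3), the length-two filtration $\fkc\subsetneq\fkm\subsetneq R$ together with the locality of $R/\fkc$ indeed forces $\fkm$ to be the unique intermediate ideal, so the classification is complete.

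In short: there is nothing to compare against in the paper, and your proof stands on its own.
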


On the basis of the above facts, we aim to explore the finiteness of $\T(R)$. Let us start the following technical proposition.

\begin{prop} \label{pp}
Let $1\le i \le n-2$.
The following conditions are equivalent.
\begin{enumerate}[\rm(1)]
\item For any element $r\in R$ with $v(r)=a_i$, the equality $I_iI_{i+2}=rI_{i+2}$ holds.
\item There exists an element $q\in R$ such that $v(q)=a_i$ and the equality $I_iI_{i+2}=qI_{i+2}$ holds. 
\end{enumerate}
Assume $i\le n-3$ and $s\in R$ is an element such that $v(s)=a_{i+1}$ and $I_{i+1}I_{i+3}=sI_{i+3}$.
Then the following is also equivalent to both of the above conditions.
\begin{enumerate}[\rm(3)]
\item There exists an element $q\in R$ such that $v(q)=a_i$ and the inclusion $sI_{i+2}\subseteq (q)$ holds. 
\end{enumerate}
\end{prop}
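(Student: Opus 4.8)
The plan is to prove the three equivalences in stages, exploiting the fact that the $I_i$ form a chain $R=I_0\supseteq I_1\supseteq\cdots\supseteq I_n=\fkc$ in which each successive quotient has length one (Remark \ref{remrem2.2}(1)), together with Lemma \ref{lem2.9}, which translates the equality $I_iI_{i+2}=qI_{i+2}$ into the containment $I_{i+2}:I_i\subseteq q^{-1}I_{i+2}$, equivalently $q^{-1}I_i\subseteq I_{i+2}:I_{i+2}$.

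First I would dispose of (1)$\Rightarrow$(2), which is trivial since any $r\in R$ with $v(r)=a_i$ exists (such elements exist because $a_i\in H=v(R)$). For (2)$\Rightarrow$(1): given a specific $q$ with $v(q)=a_i$ and $I_iI_{i+2}=qI_{i+2}$, and an arbitrary $r$ with $v(r)=a_i$, I would write $u=r/q\in Q(R)$, which has $v(u)=0$, hence $u\in\ol{R}^{\times}$. Because the map $k\to\ol{R}/\n$ is an isomorphism, $u$ agrees with a unit of $R$ modulo $\n$; more usefully, $u\cdot\fkc\subseteq\fkc$ and in fact $u$ acts as an automorphism on each $I_j$ whenever $v(u)=0$ — but I need to be careful here. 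The cleanest route: since $v(r)=v(q)=a_i$, Remark \ref{remrem2.2}(1) gives $I_i=(q)+I_{i+1}=(r)+I_{i+1}$, so $r=cq+t$ with $c\in R$, $v(c)=0$ (so $c\in R^{\times}$ using $k\cong\ol{R}/\n$), and $t\in I_{i+1}$. Then $rI_{i+2}=cqI_{i+2}+tI_{i+2}$. Now $qI_{i+2}=I_iI_{i+2}$ and $tI_{i+2}\subseteq I_{i+1}I_{i+2}\subseteq I_iI_{i+2}=qI_{i+2}$, so $rI_{i+2}\subseteq qI_{i+2}$; conversely $qI_{i+2}=c^{-1}(rI_{i+2}-tI_{i+2})\subseteq c^{-1}rI_{i+2}+c^{-1}qI_{i+2}$, and absorbing the last term (it has the same value-minimum and $c^{-1}\in R^{\times}$) yields $qI_{i+2}\subseteq rI_{i+2}$. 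I would tighten this absorption argument using that $rI_{i+2}$ and $qI_{i+2}$ are regular ideals with the same minimal value $a_i+a_{i+2}$, so one contains the other iff they are equal; hence $rI_{i+2}=qI_{i+2}=I_iI_{i+2}$, giving (1).

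For the second equivalence (assuming $i\le n-3$ and the hypothesis on $s$): the direction (1)$\Rightarrow$(3) is the easy one. Pick $q\in R$ with $v(q)=a_i$; by (1), $I_iI_{i+2}=qI_{i+2}$. Since $s\in I_{i+1}\subseteq I_i$ (because $v(s)=a_{i+1}\ge a_i$), we get $sI_{i+2}\subseteq I_iI_{i+2}=qI_{i+2}\subseteq(q)$, which is (3). The reverse (3)$\Rightarrow$(2) is the main obstacle and the only place the hypothesis $I_{i+1}I_{i+3}=sI_{i+3}$ genuinely enters. The idea: from $sI_{i+2}\subseteq(q)$ with $v(s)=a_{i+1}$, $v(q)=a_i$, deduce $q^{-1}sI_{i+2}\subseteq R$, i.e. $q^{-1}s\in R:I_{i+2}=I_{i+2}:I_{i+2}$ (the last equality because $I_{i+2}$ is a trace ideal, by Fact \ref{fact2.3zz}(2) and Lemma \ref{prelim1}). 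Now $v(q^{-1}s)=a_{i+1}-a_i$, which is exactly the minimal value of $q^{-1}I_{i+1}$ (since $v(I_{i+1})=\{a_{i+1},a_{i+2},\dots\}$). I want to upgrade "$q^{-1}s$ is one element of the right value lying in $I_{i+2}:I_{i+2}$" to "$q^{-1}I_{i}\subseteq I_{i+2}:I_{i+2}$", which by Lemma \ref{lem2.9} is precisely $I_iI_{i+2}=qI_{i+2}$. Since $I_i=(q)+I_{i+1}$ and $q^{-1}q=1\in I_{i+2}:I_{i+2}$ trivially, it remains to show $q^{-1}I_{i+1}\subseteq I_{i+2}:I_{i+2}$; and since $I_{i+1}=(s')+I_{i+2}$ for any $s'$ with $v(s')=a_{i+1}$ — in particular for $s'=s$ — and $q^{-1}I_{i+2}\supseteq q^{-1}\fkc$ needs handling too, I reduce to: $q^{-1}s\in I_{i+2}:I_{i+2}$ (done) and $q^{-1}I_{i+2}\subseteq I_{i+2}:I_{i+2}$, i.e. $I_{i+2}^2\subseteq qI_{i+2}$. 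This last containment is where I expect to use the hypothesis on $s$ together with $v(q)=a_i\le a_{i+1}$: the hypothesis $I_{i+1}I_{i+3}=sI_{i+3}$ controls the product structure one level up, and combined with Remark \ref{remrem2.2}(1) it should force the minimal value of $I_{i+2}^2$ to be at least $a_i+a_{i+2}$. I would verify this by a direct value computation, checking $v(xy)\ge a_i+a_{i+2}$ for $x,y\in I_{i+2}$, using that any value in $v(I_{i+2})$ above $a_{i+2}$ is at least $a_{i+3}$ and that $a_{i+3}+a_{i+2}\ge a_i+a_{i+2}+(a_{i+3}-a_i)$, with $a_{i+3}-a_i\ge a_{i+1}-a_i=v(q^{-1}s)$ feeding back into the $s$-hypothesis — this chain of inequalities is the delicate part and the one I would write out carefully rather than wave at. Once $I_{i+2}^2\subseteq qI_{i+2}$ is in hand, equality follows since $v(qI_{i+2})=a_i+a_{i+2}$ is attained inside $I_{i+2}^2$ as well (e.g. by $q\cdot r$ for $r\in I_{i+2}$ with $v(r)=a_{i+2}$, noting $q\in I_{i+2}$ is false in general — so instead I use $s''\cdot r$ type products), and then assembling $q^{-1}I_i\subseteq I_{i+2}:I_{i+2}$ gives (2) via Lemma \ref{lem2.9}.

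Summarizing the expected difficulty: (1)$\Leftrightarrow$(2) is a routine "scale by a unit" argument once one phrases containment of regular ideals via minimal values; (1)$\Rightarrow$(3) is immediate; the substance is (3)$\Rightarrow$(2), where the extra hypothesis $I_{i+1}I_{i+3}=sI_{i+3}$ must be leveraged to bound the minimal value of the square $I_{i+2}^2$ from below by $a_i+a_{i+2}$, after which Lemma \ref{lem2.9} closes the loop. I would organize the final write-up as: (a) reduce all three statements to statements about $q^{-1}I_{i+2}$, $q^{-1}I_{i+1}$ lying in $I_{i+2}:I_{i+2}$; (b) handle the unit-rescaling; (c) prove the key semigroup inequality using the $s$-hypothesis and Remark \ref{remrem2.2}(1).
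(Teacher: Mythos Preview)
Your route through (2)$\Rightarrow$(1) is shorter than the paper's but needs a repair. The paper proves, by descending induction on $j$, that $I_{i+1}I_j\subseteq(r)$ for all $i+2\le j\le n$, and only then concludes $I_iI_{i+2}=rI_{i+2}$. You instead write $r=cq+t$ with $c\in R^\times$, $t\in I_{i+1}$, and obtain $rI_{i+2}\subseteq qI_{i+2}$ directly; this inclusion is fine. What fails is your justification for equality: ``regular ideals with the same minimal value, so one contains the other iff they are equal'' is false (e.g.\ $(q)\subsetneq I_i$ have the same minimal value $a_i$). The correct finish is a length count: $\ell_R(R/rI_{i+2})=\ell_R(R/(r))+\ell_R(R/I_{i+2})$ and $\ell_R(R/(r))=\ell_R(\ol R/r\ol R)=v(r)=v(q)=\ell_R(R/(q))$, so $rI_{i+2}$ and $qI_{i+2}$ have equal colength and the inclusion is an equality.

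The substantive gap is in (3)$\Rightarrow$(2). Your reduction via $I_i=(q)+(s)+I_{i+2}$ to the two containments $sI_{i+2}\subseteq qI_{i+2}$ and $I_{i+2}^{\,2}\subseteq qI_{i+2}$ matches the paper exactly, and the first is immediate from (3). But your proposed ``direct value computation'' for the second cannot work: the bound $v(xy)\ge 2a_{i+2}>a_i+a_{i+2}$ is automatic and only places $q^{-1}xy$ in $\ol R$, never in $R$; no chain of semigroup inequalities will bridge that gap. (Also, you do not need the equality $I_{i+2}^{\,2}=qI_{i+2}$, which is in fact always false since the minimal values differ.) The missing idea is an algebraic factorization:
\[
q^{-1}I_{i+2}^{\,2}=(s^{-1}I_{i+2})\cdot(q^{-1}sI_{i+2}).
\]
Condition (3) gives $q^{-1}sI_{i+2}\subseteq R$, and every nonzero element there has value strictly greater than $a_{i+2}$, hence lies in $I_{i+3}$. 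Then the hypothesis $I_{i+1}I_{i+3}=sI_{i+3}$ together with $I_{i+2}\subseteq I_{i+1}$ yields $(s^{-1}I_{i+2})I_{i+3}\subseteq s^{-1}I_{i+1}I_{i+3}=I_{i+3}$. Chaining, $q^{-1}I_{i+2}^{\,2}\subseteq I_{i+3}\subseteq I_{i+2}$. This factorization is exactly where the $s$-hypothesis enters, and it is what your sketch is missing.
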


\begin{proof}
(1)$\Rightarrow$(2): This is obvious.

(2)$\Rightarrow$(1): Let $r\in R$ with $v(r)=a_i$. We first prove the following claim.

\begin{claim}\label{claim1insec2}
$I_{i+1}I_j\subseteq (r)$ for all $i+2 \le j\le n$.
%For any integer $i+2 \le j\le n$, the inclusion $I_{i+1}I_j\subseteq (r)$ holds.
\end{claim}
\begin{proof}[Proof of Claim \ref{claim1insec2}]
We prove Claim 1 by descending induction on $j$.
If $j=n$, then $r^{-1}I_{i+1}I_n \subseteq \{x\in Q(R)\mid v(x)\ge a_n\}=\c$; hence, $I_{i+1}I_n\subseteq r\c\subseteq (r)$.
Suppose that $j<n$ and $I_{i+1}I_{j+1}\subseteq (r)$. 
By noting that $q^{-1}I_{i+1}I_j\subseteq q^{-1}I_{i}I_{i+2} = I_{i+2}\subseteq R$, where the equality follows from the assumption (2), we obtain that
%By the assumption (2), we have 
\[
q^{-1}I_{i+1}I_j\subseteq R\cap \{x\in Q(R)\mid v(x)\ge a_j+(a_{i+1}-a_i)\}\subseteq I_{j+1}.
\]
This means $I_{i+1}I_j\subseteq qI_{j+1}$.
Choose a unit $u\in R^\times$ such that $q-ur\in I_{i+1}$.
Then $I_{i+1}I_j\subseteq qI_{j+1}\subseteq urI_{j+1}+I_{i+1}I_{j+1}$.
By the induction hypothesis, $I_{i+1}I_{j+1}\subseteq (r)$.
Hence we get $I_{i+1}I_j\subseteq (r)$.
Thus we may proceed the induction.
\end{proof}

By Claim \ref{claim1insec2}, we obtain that $I_{i+1}I_{i+2}\subseteq (r)$.
Remembering that $I_i=(r)+I_{i+1}$,
it follows that $r^{-1}I_iI_{i+2}\subseteq R\cap \{x\in Q(R)\mid v(x) \ge a_{i+2}\}=I_{i+2}$, that is, $I_iI_{i+2}=rI_{i+2}$.

Now assume $i\le n-3$ and $s\in R$ is an element such that $v(s)=a_{i+1}$ and $I_{i+1}I_{i+3}=sI_{i+3}$.
The implication (2)$\Rightarrow$(3) is clear.
We consider the converse direction (3)$\Rightarrow$(2).
Our assumption (3) says $q^{-1}sI_{i+2}\subseteq R\cap \{x\in Q(R)\mid v(x)\ge a_{i+3}\}=I_{i+3}$.
On the other hand, we see inclusions
$
q^{-1}I_{i+2}I_{i+2}=s^{-1}I_{i+2}(q^{-1}sI_{i+2})\subseteq s^{-1}I_{i+2}I_{i+3}\subseteq I_{i+3}
$.
Here the last inclusion follows by the assumption on $s$. %Hence $I_{i+2}I_{i+2}$, $sI_{i+2}\subseteq qI_{i+3}$.
Hence, we have inclusions $I_{i+2}I_{i+2}, sI_{i+2}\subseteq qI_{i+3}$.
Remembering $I_i=(q)+(s)+I_{i+2}$, we get $I_iI_{i+2}=qI_{i+2}+sI_{i+2}+I_{i+2}I_{i+2}=qI_{i+2}$.
\end{proof}

\begin{lem}\label{lem2.5}
When $i=n-1$ or $i=n$, the equality $I_{i}^2=q_{i}I_{i}$ holds. 
\end{lem}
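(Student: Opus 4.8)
The plan is to reduce both equalities to elementary computations with the conductor, exploiting that $\fkc=I_n$ is, as a fractional ideal, nothing but $\{x\in Q(R)\mid v(x)\ge a_n\}=\n^{a_n}$, i.e. an ideal of the discrete valuation ring $\ol R$ and hence principal, generated by any element of value $a_n$. First I record that for $i\in\{n-1,n\}$ one has $\min\{v(x)\mid x\in I_i\}=a_i$ (because $a_i\in H=v(R)$ and $I_i=\{r\in R\mid v(r)\ge a_i\}$), so an element $q_i\in R$ with $v(q_i)=a_i$ exists; in fact I will prove $I_i^2=q_iI_i$ for \emph{every} such choice of $q_i$.

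For the case $i=n$: since $v(q_n)=a_n$, the element $q_n$ generates the $\ol R$-ideal $\fkc$, so $\fkc=q_n\ol R$, and using $\ol R\cdot\ol R=\ol R$ one gets $I_n^2=\fkc^2=(q_n\ol R)^2=q_n^2\ol R=q_n(q_n\ol R)=q_n\fkc=q_nI_n$. (Alternatively, one may argue purely from values: $\fkc^2\subseteq\{x\mid v(x)\ge 2a_n\}$, while every $x$ with $v(x)\ge 2a_n$ satisfies $x/q_n\in\{y\mid v(y)\ge a_n\}=\fkc$, so $x\in q_n\fkc\subseteq\fkc^2$; hence $\fkc^2=q_n\fkc$.)

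For the case $i=n-1$: by Remark \ref{remrem2.2}(1) applied with $r=q_{n-1}$ we have $I_{n-1}=(q_{n-1})+I_n=(q_{n-1})+\fkc$, so, expanding the product of fractional ideals, $I_{n-1}^2=(q_{n-1}^2)+q_{n-1}\fkc+\fkc^2$. By the previous case $\fkc^2=q_n\fkc$; and since $v(q_n/q_{n-1})=a_n-a_{n-1}>0$ the element $q_n/q_{n-1}$ lies in $\ol R$, whence $q_n\fkc\subseteq q_{n-1}\fkc$ because $\fkc$ is an $\ol R$-module. Therefore $I_{n-1}^2=(q_{n-1}^2)+q_{n-1}\fkc=q_{n-1}\bigl((q_{n-1})+\fkc\bigr)=q_{n-1}I_{n-1}$.

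I do not anticipate a genuine obstacle here: the only points to keep straight are that products such as $\fkc^2$ and $I_{n-1}^2$ are formed as $R$-submodules of $Q(R)$, that $\fkc$ (being an ideal of the discrete valuation ring $\ol R$) absorbs multiplication by every element of $\ol R$, and that the inclusion $q_n\fkc\subseteq q_{n-1}\fkc$ rests only on the automatic inequality $a_{n-1}<a_n$. In particular no hypothesis beyond $v(q_i)=a_i$ is used, so the conclusion is independent of the chosen representatives $q_i$.
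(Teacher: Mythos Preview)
Your proof is correct and follows essentially the same approach as the paper: both use the decomposition $I_{n-1}=(q_{n-1})+\fkc$ and reduce the $i=n-1$ case to the containment $\fkc^2\subseteq q_{n-1}\fkc$, established by a valuation argument. The only cosmetic difference is that you route this containment through the $i=n$ case (writing $\fkc^2=q_n\fkc\subseteq q_{n-1}\fkc$ via $q_n/q_{n-1}\in\ol R$), whereas the paper verifies $q_{n-1}^{-1}\fkc^2\subseteq\{x\mid v(x)\ge 2a_n-a_{n-1}\}\subseteq\fkc$ directly; you also spell out the ``clear'' case $i=n$ in more detail than the paper does.
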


\begin{proof}
%If $i=n$, then this is clear. 
This is clear if $i=n$. 
Assume that $i=n-1$. Since $I_{n-1}=(q_{n-1})+I_n$, we only need to check $I_n^2\subseteq q_{n-1}I_{n-1}$. 
Since $q_{n-1}^{-1}I_n^2\subseteq \{x\in Q(R)\mid v(x)\ge 2a_n-a_{n-1}(\ge a_n)\}\subseteq I_n\subseteq I_{n-1}$, we see the inclusion $I_n^2\subseteq q_{n-1}I_{n-1}$.
\end{proof}

Let $n\ge 4$, and fix an integer $i$ such that $1\le i\le n-3$.
Take elements $q,q'\in R$ such that $v(q)=a_i$ and $v(q')=a_{i+1}$.
For each $\alpha\in R$, we set 
\[
J_\alpha^{(i)}:=(q+\alpha q')+I_{i+2}.
\]
Although the ideal above depends on the choice of $q$ and $q'$ (not only on $i$ and $\alpha$), we use this notation to avoid complications.
The following proposition shows that $J_\alpha^{(i)}$ are trace ideals.

%With this notation $J_\alpha^{(i)}$ are typical trace ideals, which are not monomial:

\begin{prop} \label{ppp}
Assume that $I_iI_{i+2}\not=qI_{i+2}$ and $I_{i+1}I_{i+3}=q'I_{i+3}$.
Then the following hold true.
\begin{enumerate}[\rm(1)]
\item For each $\alpha \in R$, $J_\alpha^{(i)}\in \T(R)$.
\item If $\alpha-\beta\not\in \fkm$, then $J_\alpha^{(i)}\not\supseteq J_\beta^{(i)}$.
\item If $k$ is infinite, then $\T(R)$ is an infinite set.
\end{enumerate}
\end{prop}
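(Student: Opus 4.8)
The plan is to prove the three parts in order, with part (1) being the technical heart. For part (1), I would show that each $J_\alpha^{(i)}$ satisfies condition (2) of Lemma \ref{prelim1}, i.e., $(R:J_\alpha^{(i)})J_\alpha^{(i)}=J_\alpha^{(i)}$, or equivalently $R:J_\alpha^{(i)}=J_\alpha^{(i)}:J_\alpha^{(i)}$. Write $p:=q+\alpha q'$, so $v(p)=a_i$ and $J_\alpha^{(i)}=(p)+I_{i+2}$. The strategy is to locate $J_\alpha^{(i)}:J_\alpha^{(i)}$ explicitly. Since the hypothesis $I_{i+1}I_{i+3}=q'I_{i+3}$ holds, Proposition \ref{pp} (with $s=q'$) together with the hypothesis $I_iI_{i+2}\ne qI_{i+2}$ gives us structural control: in particular $q'I_{i+2}\not\subseteq(q)$, and by Proposition \ref{pp} applied to the index $i+1$ (if $i+1\le n-3$) or directly, we get that $I_{i+1}^2=q'I_{i+1}$ and $I_{i+2}I_{i+1}=q'I_{i+2}$ etc. The key computation is to show $p^{-1}I_{i+2}I_{i+2}\subseteq I_{i+2}$ so that $p^{-1}I_{i+2}$ multiplies $J_\alpha^{(i)}$ into itself, and conversely to pin down that $J_\alpha^{(i)}:J_\alpha^{(i)}$ equals exactly $R:J_\alpha^{(i)}$ by a length count against $\ol{R}$ (using that $\ell$ of $\ol{R}/\fkc$ and the positions of the $a_j$ are known). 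I expect this length/colon bookkeeping, using $v$ and the explicit description of integrally closed ideals, to be the main obstacle.

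For part (2), suppose $J_\alpha^{(i)}\supseteq J_\beta^{(i)}$, so in particular $q+\beta q'\in J_\alpha^{(i)}=(q+\alpha q')+I_{i+2}$. Then $(\beta-\alpha)q'=(q+\beta q')-(q+\alpha q')\in (q+\alpha q')R+I_{i+2}$. Since $v(q')=a_{i+1}<a_{i+2}$ and $v(\text{any nonzero element of }I_{i+2})\ge a_{i+2}$, while $v((q+\alpha q')r)=a_i+v(r)$ for $0\ne r\in R$, the only way to produce something of value $a_{i+1}$ from $(q+\alpha q')R$ is via $r$ with $v(r)=a_{i+1}-a_i$; but $a_{i+1}-a_i$ need not lie in $H$, and more to the point, $(\beta-\alpha)q'\in J_\alpha^{(i)}$ forces $v((\beta-\alpha)q')\ge a_i$, which is automatic, so I must argue more carefully: I would show that $(\beta-\alpha)q'\notin J_\alpha^{(i)}$ whenever $\beta-\alpha\notin\fkm$, using that otherwise $q'\in (\beta-\alpha)^{-1}J_\alpha^{(i)}=J_\alpha^{(i)}$ (as $\beta-\alpha$ is a unit), and then $(q+\alpha q')-q'\cdot(\text{unit})$ would show $q\in J_\alpha^{(i)}$, hence $(q)+I_{i+2}=J_\alpha^{(i)}$, forcing $I_iI_{i+2}=qI_{i+2}$ by part (1)'s computation or Lemma \ref{lem2.9} applied to the trace ideal $J_\alpha^{(i)}$; this contradicts the standing hypothesis $I_iI_{i+2}\ne qI_{i+2}$. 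So the obstruction resolves into a clean contradiction.

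For part (3), assume $k$ is infinite. By part (2), the family $\{J_\alpha^{(i)}\mid \alpha\in R\}$, or rather the subfamily indexed by coset representatives of $\fkm$ in $R$ (equivalently by elements of $k$), consists of ideals no two of which contain one another; since $k$ is infinite there are infinitely many such $\alpha$ with pairwise distinct images in $k$, hence infinitely many pairwise incomparable ideals $J_\alpha^{(i)}$. Each is a regular trace ideal by part (1), so $\T(R)$ is infinite. I should check that $i$ with $1\le i\le n-3$ and the hypotheses of Proposition \ref{ppp} can actually be realized; but that is the caller's responsibility, as the proposition is conditional. The only subtlety is making sure the map $\alpha\mapsto J_\alpha^{(i)}$ genuinely factors through $k=R/\fkm$ on the level of the incomparability statement, which is exactly what part (2) provides: if $\alpha\equiv\beta\pmod{\fkm}$ the ideals may coincide, but if $\alpha\not\equiv\beta$ they are incomparable, hence distinct. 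Therefore the assignment from $k$ to $\T(R)$ is injective, and infiniteness follows.
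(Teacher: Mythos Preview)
Your outline for part (3) is fine and matches the paper. There are genuine gaps in parts (1) and (2).

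In part (2), you correctly reach $q'\in J_\alpha^{(i)}$ once $\alpha-\beta$ is a unit, and then $q=p-\alpha q'\in J_\alpha^{(i)}$. But from this you conclude ``hence $(q)+I_{i+2}=J_\alpha^{(i)}$'', which is false: what you have actually shown is $(q,q')+I_{i+2}\subseteq J_\alpha^{(i)}$, so $J_\alpha^{(i)}=I_i$. That is not a contradiction, since $I_i$ is always a trace ideal, and neither Lemma \ref{lem2.9} nor anything in your part (1) yields $I_iI_{i+2}=qI_{i+2}$ from $J_\alpha^{(i)}=I_i$. The paper instead exploits the \emph{expression} of $q'$ in $J_\alpha^{(i)}$: write $q'=xf+y$ with $x\in R$, $y\in I_{i+2}$, and set $s:=q'-y=xf$. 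Then $v(s)=a_{i+1}$, so by Proposition \ref{pp} (equivalence (1)$\Leftrightarrow$(2) at index $i{+}1$) one has $I_{i+1}I_{i+3}=sI_{i+3}$, while $sI_{i+2}=xfI_{i+2}\subseteq(f)$. Now Proposition \ref{pp}(3)$\Rightarrow$(1) at index $i$ (with $f$ playing the role of $q$ and this $s$) gives $I_iI_{i+2}=fI_{i+2}$, hence $I_iI_{i+2}=qI_{i+2}$, the desired contradiction. The key move you are missing is feeding the membership $q'\in (f)+I_{i+2}$ back into condition (3) of Proposition \ref{pp}, rather than trying to identify $J_\alpha^{(i)}$ as an ideal.

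In part (1), your plan to ``locate $J_\alpha^{(i)}:J_\alpha^{(i)}$ explicitly'' via a length count is too vague, and several auxiliary equalities you invoke (e.g.\ $I_{i+1}^2=q'I_{i+1}$, $I_{i+1}I_{i+2}=q'I_{i+2}$) are not consequences of the hypotheses. The paper's argument is direct and uses the same engine as part (2): for $g\in R:J_\alpha^{(i)}$, write $g=u+h$ with $u\in R^\times\cup\{0\}$ and $v(h)\ge1$; since $hI_{i+2}\subseteq I_{i+2}$ (as $I_{i+2}$ is a trace ideal) and $v(h)\ge1$, one gets $hI_{i+2}\subseteq I_{i+3}$. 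Because $hf\in R$, $v(h)+a_i\in H$, so either $v(h)+a_i=a_{i+1}$ or $v(h)+a_i\ge a_{i+2}$. In the first case, $s:=fh$ has $v(s)=a_{i+1}$ and $sI_{i+2}\subseteq fI_{i+3}\subseteq(f)$, so Proposition \ref{pp}(3)$\Rightarrow$(1) again forces the forbidden $I_iI_{i+2}=qI_{i+2}$. Hence $v(h)+a_i\ge a_{i+2}$, so $hf\in I_{i+2}$ and $hJ_\alpha^{(i)}\subseteq J_\alpha^{(i)}$. In both (1) and (2) the implication (3)$\Rightarrow$(1) of Proposition \ref{pp} is what converts the hypothesis $I_iI_{i+2}\ne qI_{i+2}$ into usable information; your proposal never invokes it.
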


\begin{proof}
Set $f:=q+\alpha q'$. 

(1): Let $g\in R:J_\alpha^{(i)}$. It is enough to  prove that $g\in J_\alpha^{(i)}:J_\alpha^{(i)}$ (see Lemma \ref{prelim1}).
Since $\c=I_n\subseteq J_\alpha^{(i)}$, $g\in R:\c=\overline{R}$.
Hence, we can write $g=u+h$, where $h\in \overline{R}$ with $v(h)\ge 1$ and either $u=0$ or $u\in R^\times$. Indeed, if $v(g)>0$, then we can choose $u$ as $0$ and $h$ as $g$. If $v(g)=0$, then there exists $u\in R$ with $v(u)=0$ such that $v(g-u)>0$. Thus, we can define $h$ as $g-u$.
By noting that $u\in R$ and $g\in R:J_\alpha^{(i)}$, we have $h\in R:J_\alpha^{(i)}$. 
Moreover, to show $g\in J_\alpha^{(i)}:J_\alpha^{(i)}$, it is enough to check $h\in J_\alpha^{(i)}:J_\alpha^{(i)}$.
We may assume $h\not=0$.

Observe that $h\in R:J_\alpha^{(i)}\subseteq R:I_{i+2}$, and so $hI_{i+2}\subseteq (R:I_{i+2})I_{i+2}=I_{i+2}$.
Since $v(h)\ge 1$, we can obtain a more strict inclusion $hI_{i+2}\subseteq I_{i+3}$.
As $f\in J_\alpha^{(i)}$, we have $hf\in R$.
Thus $v(h)+a_i=v(fh)\in H$.
Since $v(h)\ge 1$, this implies that either $v(h)+a_i=a_{i+1}$ or $v(h)+a_i\ge a_{i+2}$.
Suppose $v(h)+a_i=a_{i+1}$.
%Set $r:=fh$.
Then $v(fh)=v(f)+v(h)=a_{i+1}$; hence, $I_{i+1}I_{i+3}=fhI_{i+3}$ by Proposition \ref{pp}.
On the other hand, $fhI_{i+2}=f(hI_{i+2})\subseteq fI_{i+3}$.
By Proposition \ref{pp} (3)$\Rightarrow$(1), we reach an equality $I_iI_{i+2}=qI_{i+2}$. This contradicts our assumption. It follows that $v(h)+a_i\ge a_{i+2}$. 

Hence, $hf\in R \cap\{x\in Q(R)\mid v(x)\ge a_{i+2}\}=I_{i+2}\subseteq J_\alpha^{(i)}$.
By combining this inclusion with the inclusion $hI_{i+2}\subseteq I_{i+3}\subseteq J_\alpha^{(i)}$, we obtain the desired inclusion $hJ_\alpha^{(i)}\subseteq J_\alpha^{(i)}$.
Therefore, we conclude that $J_\alpha^{(i)}$ is a trace ideal of $R$.

(2): 
Suppose that $\alpha-\beta\not\in \fkm$ and $J_\alpha^{(i)}\supseteq J_\beta^{(i)}$.
Then $(\alpha-\beta)q'=(q+\alpha q') - (q+\beta q')\in J_\alpha^{(i)}$. 
By noting that $\alpha-\beta$ is a unit of $R$, $q'\in J_\alpha^{(i)}$.
%Since $\alpha-\beta\not\in \fkm$, $\alpha-\beta$ is a unit of $R$. Hence, $q'\in J_\alpha^{(i)}$.
This means that there exists $x\in R$ and $y\in I_{i+2}$ such that $q'=xf + y$ (note that $f=q+\alpha q'$). 
Since $v(q')=a_{i+1}$, we have $v(q'-y)=a_{i+1}$. It follows that $I_{i+1}I_{i+3}=(q'-y)I_{i+3}$ by Proposition \ref{pp}. On the other hand, since $q'-y=xf$, we can also observe that $(q'-y)I_{i+2}=xfI_{i+2}\subseteq (f)$.
Thus, by Proposition \ref{pp} (3)$\Rightarrow$(1), we get $I_iI_{i+2}=qI_{i+2}$. This contradicts our assumption.

(3): By (1) and (2), any pair of nonzero distinct representatives $\alpha$ and $\beta$ of the residue field $k=R/\fkm$ provides distinct trace ideals $J_\alpha^{(i)}$ and $J_\beta^{(i)}$. Hence, there are trace ideals more than the cardinality of $k$.
\end{proof}

\begin{cor} \label{cc}
Let $n\ge 4$.
Assume there exists $1 \le i \le n-3$ such that $I_iI_{i+2}\not=qI_{i+2}$ for some (any) $q\in R$ with $v(q)=a_i$.
Then the following hold true.
\begin{enumerate}[\rm(1)]
\item $\I(R)\subsetneq \T(R)$.
\item If $k$ is infinite, then $\T(R)$ is an infinite set.
\end{enumerate}
\end{cor}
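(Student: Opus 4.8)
The plan is to deduce Corollary \ref{cc} directly from the machinery established in Propositions \ref{pp} and \ref{ppp}, treating it as a packaging of those results. The hypothesis gives us an index $1\le i\le n-3$ with $I_iI_{i+2}\ne qI_{i+2}$ for $q\in R$ of value $a_i$; by the equivalence (1)$\Leftrightarrow$(2) of Proposition \ref{pp}, the phrase ``for some'' and ``for any'' coincide here, so the parenthetical ``(any)'' is automatic and needs no separate argument.

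First I would arrange, by increasing $i$ if necessary, to be in the situation where Proposition \ref{ppp} applies. Among all indices $j$ in the range $1\le j\le n-3$ for which $I_jI_{j+2}\ne q_jI_{j+2}$, pick the largest such $j$; call it $i$ again. For this choice, $I_{i+1}I_{i+3}=q'I_{i+3}$ must hold for $q'\in R$ with $v(q')=a_{i+1}$: indeed, if $i+1\le n-3$ this follows from maximality of $i$, and if $i+1\in\{n-2,n-1\}$ it follows from Lemma \ref{lem2.5} together with Proposition \ref{pp}. Thus the hypotheses of Proposition \ref{ppp} are met for this $i$ and these choices of $q,q'$.

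Now part (2) of the Corollary is immediate: Proposition \ref{ppp}(3) says that if $k$ is infinite then $\T(R)$ is infinite. For part (1), Proposition \ref{ppp}(1) produces the ideal $J_0^{(i)}=(q)+I_{i+2}\in\T(R)$ (taking $\alpha=0$), and I must check $J_0^{(i)}\notin\I(R)=\{I_0,\dots,I_n\}$. Since $q+\alpha q'$ has value $a_i$ for every $\alpha\in\fkm$, one has $v(J_0^{(i)})$ generated in value by $a_i$, so if $J_0^{(i)}$ were integrally closed it would have to equal $I_i$; but $I_i\supsetneq J_0^{(i)}$ strictly because $\ell_R(I_i/I_{i+2})=2$ while $\ell_R(I_i/J_0^{(i)})=1$ (the element of value $a_{i+1}$ in $I_i$ is not in $J_0^{(i)}$, else $q'\in J_0^{(i)}$, contradicting Proposition \ref{ppp}(2) with $\alpha=0,\beta$ a unit — more directly, $J_0^{(i)}=I_i$ would force $I_iI_{i+2}=(q)I_{i+2}$ by degree-counting in $\ol R$, contradicting the hypothesis). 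Either way $J_0^{(i)}\ne I_j$ for all $j$, so $\I(R)\subsetneq\T(R)$, using Fact \ref{fact2.3zz}(2) for the inclusion $\I(R)\subseteq\T(R)$.

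I do not expect a serious obstacle here; the only delicate point is the reduction step selecting the correct index $i$ (making sure the ``next'' index satisfies the $q'$-hypothesis of Proposition \ref{ppp}), which is handled cleanly by the maximality choice combined with Lemma \ref{lem2.5}. The verification that $J_0^{(i)}$ is not integrally closed is the kind of short length/value computation the paper performs routinely, so I would state it briefly rather than belabor it.
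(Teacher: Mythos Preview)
Your argument is correct and follows essentially the same route as the paper's proof: reduce to the maximal index $i$ so that Proposition~\ref{ppp} applies, then read off both conclusions. Two small points: (i) when $i+1=n-2$ you need $I_{n-2}I_n=q'I_n$, but Lemma~\ref{lem2.5} does not cover this case; the paper simply observes it directly (since $q'^{-1}I_{n-2}I_n\subseteq\{z:v(z)\ge a_n\}=\fkc=I_n$), so just say that instead of citing Lemma~\ref{lem2.5}. (ii) For part~(1) the paper exhibits $J_1^{(i)}$ rather than $J_0^{(i)}$: then $J_1^{(i)}\not\supseteq J_0^{(i)}$ by Proposition~\ref{ppp}(2), while $I_i\supseteq J_0^{(i)}$, so $J_1^{(i)}\ne I_i$ in one line. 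Your choice of $J_0^{(i)}$ also works (your Proposition~\ref{ppp}(2) argument showing $q'\notin J_0^{(i)}$ is valid), but the ``degree-counting'' aside is vague and should be dropped.
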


\begin{proof}
We first note that for any element $p\in I_{n-2}$ with $v(q)=n-2$, the equality $I_{n-2}I_{n}=qI_n$ always holds.
Thus we may pick $i$ as the biggest integer such that the inequality $I_iI_{i+2}\not=qI_{i+2}$ holds for any $q\in R$ with $v(q)=a_i$.
In particular, for such $i$, we have $I_{i+1}I_{i+3}=q'I_{i+3}$ for any $q'\in R$ with $v(q')=a_{i+1}$.

(1): By Proposition \ref{ppp}(1), we have $J_1^{(i)}=(q+q')+I_{i+2}\in \T(R)$. On the other hand, by Proposition \ref{ppp}(2), $J_1^{(i)}$ cannot contain $J_0^{(i)}$. In particular, $J_1^{(i)}\ne I_{i}$ since $J_0^{(i)}\subseteq I_{i}$. This shows that $J_1^{(1)}\not\in \I(R)$.
Hence we obtain that $\I(R)\not=\T(R)$. By recalling Fact \ref{fact2.3zz}, this proves $\I(R)\subsetneq \T(R)$.

(2): Now we assume $k$ is infinite.
Then, by Proposition \ref{ppp}(3), $\T(R)$ contains an infinite subset $\{J_\alpha^{(i)} \mid \text{$\alpha$ is a nonzero representative of $k$}\}$.
\end{proof}

Here we achieve the main theorem of this section.

\begin{thm} \label{th1}
Let $n\ge 3$. %\com{Do we need $n\ge 4$? See Corollary 2.8.} 
The following conditions are equivalent.
\begin{enumerate}[\rm(1)]
\item $\T(R)=\I(R)$.
\item For each $1\le i \le n-2$, there exists an element $q_i$ such that $v(q_i)=a_i$ and $I_iI_{i+2}=q_iI_{i+2}$.
\item For each $1\le i \le n-2$ and each element $q_i\in R$ with $v(q_i)=a_i$, the equality $I_iI_{i+2}=q_iI_{i+2}$ holds.
\end{enumerate}
If the residue field $k$ is infinite, then the following is also equivalent to the above conditions.
\begin{enumerate}[\rm(4)] 
\item $\T(R)$ is a finite set. 
\end{enumerate}
\end{thm}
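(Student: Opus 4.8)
The plan is to establish the cycle $(3)\Rightarrow(2)\Rightarrow(1)\Rightarrow(3)$, together with $(1)\Leftrightarrow(4)$ when $k$ is infinite. The implication $(3)\Rightarrow(2)$ is immediate. For $(2)\Rightarrow(1)$ I would argue that, under (2), every regular trace ideal is integrally closed and contains $\fkc$: by Fact \ref{fact2.3zz}(1) any regular trace ideal $L$ contains $\fkc=I_n$, so $I_n\subseteq L\subseteq R$ and hence $L=I_j$ for some $j$ as soon as we know $L$ is integrally closed. To see that $L$ must equal one of the $I_j$, I would take the smallest index $i$ with $I_i\subseteq L$ and suppose for contradiction that $L\subsetneq I_i$; then $1\le i\le n$ (using $n\ge 3$) and $\ell_R(I_{i-1}/I_i)=1$ forces $I_i\subseteq L\subsetneq I_{i-1}$ with $L$ not containing an element of value $a_{i-1}$. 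The point is then to use (2), via Lemma \ref{lem2.9} and Proposition \ref{pp}, to show that the trace property $R:L=L:L$ together with $L$ missing the value $a_{i-1}$ is contradictory — essentially the same mechanism as in the proof of Proposition \ref{ppp}(1), where the presence of an element of "intermediate" value in the dual forces the bad equality $I_jI_{j+2}=q_jI_{j+2}$ to fail. (An alternative, cleaner route for $(2)\Rightarrow(1)$: prove the contrapositive $\neg(1)\Rightarrow\neg(2)$ directly — if some trace ideal is not of the form $I_j$, locate the relevant index and run the argument of Corollary \ref{cc}.)

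For $(1)\Rightarrow(3)$ I would argue by contraposition: suppose (3) fails, so there is some $1\le i\le n-2$ and some $q\in R$ with $v(q)=a_i$ such that $I_iI_{i+2}\ne qI_{i+2}$. By Proposition \ref{pp}, equivalently (1) of that proposition fails, so this holds for \emph{every} such $q$, and moreover $i\le n-3$ by Lemma \ref{lem2.5} (for $i=n-1,n$ the equality $I_i^2=q_iI_i$ always holds; and the case $i=n-2$ is the $j=n$, i.e.\ conductor, case, which always works — this is the first line of the proof of Corollary \ref{cc}). Now I would invoke Corollary \ref{cc}(1) to conclude $\I(R)\subsetneq\T(R)$, i.e.\ (1) fails. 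This closes the cycle.

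Finally, for the equivalence with (4) when $k$ is infinite: $(1)\Rightarrow(4)$ is immediate since $\I(R)=\{I_0,\dots,I_n\}$ is finite. For $(4)\Rightarrow(1)$ I argue the contrapositive: if (1) fails then (3) fails, so by the reduction above there is $1\le i\le n-3$ with $I_iI_{i+2}\ne qI_{i+2}$ for all $q$ of value $a_i$, and then Corollary \ref{cc}(2) (i.e.\ Proposition \ref{ppp}(3)) yields that $\T(R)$ is infinite, contradicting (4).

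I expect the main obstacle to be the implication $(2)\Rightarrow(1)$, where one must rule out the existence of a trace ideal strictly between two consecutive integrally closed ideals; the natural way is to show that every regular trace ideal is integrally closed when (2) holds, which amounts to a careful valuation bookkeeping argument on $R:L$ exactly parallel to the computation in the proof of Proposition \ref{ppp}(1) — deducing from $g\in R:L$ that $g\in L:L$ by splitting $g$ into a unit part and a part of positive value and excluding the "intermediate value" case via Proposition \ref{pp}(3)$\Rightarrow$(1). The other implications are essentially bookkeeping on top of Corollary \ref{cc} and Lemma \ref{lem2.5}.
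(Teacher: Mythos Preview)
Your handling of $(3)\Rightarrow(2)$, $(1)\Rightarrow(3)$ (via the contrapositive and Corollary~\ref{cc}(1)), and $(1)\Rightarrow(4)$ is correct. The genuine gap is $(2)\Rightarrow(1)$, which is the only substantive direction, and your sketch does not work.

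First, the setup is inconsistent: you take the smallest $i$ with $I_i\subseteq L$ and then suppose $L\subsetneq I_i$, which is impossible. Even reading charitably (say $a_i=\min\{v(x):x\in L\}$, so $L\subseteq I_i$), nothing a priori bounds $L$ from below by $I_{i+1}$ or $I_{i+2}$; a trace ideal containing $\fkc$ could in principle sit far below its integral closure $I_i$, so the problem is not merely to ``rule out a trace ideal strictly between two consecutive $I_j$''. Second, your alternative route misreads Corollary~\ref{cc}: that corollary deduces $\I(R)\subsetneq\T(R)$ \emph{from} the failure of (2), which is the content of $(1)\Rightarrow(2)$, not its converse. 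The same circularity infects your $(4)\Rightarrow(1)$: you invoke ``$\neg(1)\Rightarrow\neg(3)$'', but that is precisely the missing $(3)\Rightarrow(1)$. (The fix for the latter is easy --- argue $(4)\Rightarrow(2)$ directly via Corollary~\ref{cc}(2) --- but $(2)\Rightarrow(1)$ remains.)

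What the paper actually does for $(3)\Rightarrow(1)$ is different in kind from the computation in Proposition~\ref{ppp}(1). Given $I\in\T(R)$ with $a_i=\min v(I)$ and $q_i\in I$ of value $a_i$, write $I=(q_i,f_2,\dots,f_l)+\fkc$ with the $f_j\in I_{i+1}$. For each $j\ge i+2$ one uses $I_{i+1}I_j\subseteq I_iI_{i+2}=q_iI_{i+2}$ to check $q_i^{-1}I_jI\subseteq R$, whence $q_i^{-1}I_j\subseteq R:I=I:I$ and so $I_j\subseteq I$. This yields $I_{i+2}\subseteq I\subseteq I_i$; since $\ell_R(I_i/I_{i+2})=2$, either $I=I_i$ or $I=(q_i)+I_{i+2}$, and in the latter case one verifies directly that $q_i^{-1}q_{i+1}\in R:I$, so $q_{i+1}\in(R:I)I=I$ and again $I=I_i$. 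The crucial move --- manufacturing elements of $R:I$ and then using the trace property $R:I=I:I$ to pull each $I_j$ back into $I$ --- is the missing idea in your plan; the analogy with Proposition~\ref{ppp}(1) is superficial, since there one is showing a specific ideal \emph{is} a trace ideal rather than exploiting the trace property to force integral closedness.
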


\begin{proof}
(1)$\Rightarrow$(4): This is trivial. 

(4)$\Rightarrow$(2): Assume that $k$ is infinite. If $n=3$, then the condition (2) is automatically satisfied by Lemma \ref{lem2.5}. Hence, the assertion holds true. If $n\ge 4$, then the assertion holds by Corollary \ref{cc}. 

Hence, it is enough to prove that (1), (2), (3) are equivalent. (2)$\Leftrightarrow$(3) follows from Proposition \ref{pp}. Note that for the case of $n=3$, the condition (2) is automatically satisfied by Lemma \ref{lem2.5}. Hence, it is enough to prove the implications (1)$\Rightarrow$(2) for $n\ge 4$ and (3) $\Rightarrow$ (1) for $n\ge 3$.

(1)$\Rightarrow$(2): This implication follows from Corollary \ref{cc}.

\begin{comment}
\old{
(2)$\Rightarrow$(1): Let $I\in \T(R)$.
We aim to show $I\in \I(R)$.
Take $i$ such that $a_i=\min\{v(x)\mid x\in I\}$.
Then, since $I$ contains $\c$, we can write $I=(f_1,f_2,\dots,f_l)+\c$ for some $l$, where $f_1=uq_i+g$, $u\in R^\times$, and $g,f_2,\dots,f_l\in I_{i+1}$.
}
\com{It seems that there is a gap to see $uq_i+g\in I$.}
\end{comment}

%\new{
(3)$\Rightarrow$(1): Let $I\in \T(R)$.
We aim to prove $I\in \I(R)$.
Let $i$ be an integer such that $a_i=\min\{v(x)\mid x\in I\}$, and we choose $q_i\in I$ such that $v(q_i)=a_i$. Note that $I$ contains $\fkc$ by Fact \ref{fact2.3zz}(1). Hence, $I=\fkc$ if $i=n$. If $i=n-1$, we obtain that $\fkc \subsetneq I \subseteq I_{n-1}$. It follows that $I=I_{n-1}$. Hence, we may assume that $1 \le i \le n-2$.

Since $I$ contains $\c$, we can write $I=(q_i,f_2,\dots,f_l)+\c$ for some $l\ge 2$, where  $f_2,\dots,f_l\in I_{i+1}$. 
%}

\begin{claim}\label{claim2insec2}
$I$ contains $I_j$ for each $j\in\{i+2,\dots,n\}$.
\end{claim}

\begin{proof}[Proof of Claim \ref{claim2insec2}]
We proceed by descending induction on $j$.
The case of $j=n$ is trivial.
Suppose that $j<n$ and $I\supseteq I_{j+1}$. 
%Suppose $j<n$.
%By the induction hypothesis, $I\supseteq I_{j+1}$.
Since $j\ge i+2$ and $i+1\ge i$, we have $I_{i+1}I_j\subseteq I_iI_{i+2}=q_iI_{i+2}$.
In other words, $q_i^{-1}I_jI_{i+1}\subseteq I_{i+2}\subseteq R$. %, that is, $q_i^{-1}I_{i+1}\subseteq R:I_j$.
%This shows that $q_i^{-1}I_{i+1}\subseteq R:I_j=I_j:I_j$, where the last equality follows from Lemma \ref{prelim1}.
%, and so $(q_i^{-1}I_{i+1})I_j\subseteq (R:I_j)I_j\subseteq I_j$.
%In particular, $q_i^{-1}I_j[(g,f_2,\dots,f_l)+\c]\subseteq q_i^{-1}I_jI_{i+1}\subseteq I_j$.
Hence, by noting that $f_2,\dots,f_l\in I_{i+1}$, we obtain that $q_i^{-1}I_j [(f_2,\dots,f_l)+\c]\subseteq q_i^{-1}I_j I_{i+1}\subseteq R$.
It follows that 
\[
q_i^{-1}I_j I= q_i^{-1}I_j [(q_i, f_2,\dots,f_l)+\c]\subseteq I_j + q_i^{-1}I_j I_{i+1}\subseteq R.
\] 
%Also, since $q_i^{-1}f_1=u+q_i^{-1}g$, one gets $q_i^{-1}I_jf_1\subseteq uI_j+q_i^{-1}gI_j\subseteq R$.
In other words, we have $q_i^{-1}I_j\subseteq R:I=I:I$, where the last equality follows from Lemma \ref{prelim1}. Therefore, we obtain that $I_j=q_i^{-1}I_j q_i \subseteq q_i^{-1}I_j I \subseteq (I:I)I=I$.
%Take an element $r\in I_j$ with $v(r)=a_j$.
%Then, $r=(q_i^{-1}r)q_i\in (R:I)I=I$.
%Note that $v((q_i^{-1}r)f_1)=a_j$.
%Therefore $I\supseteq (q_i^{-1}rf_1)+I_{j+1}=I_j$.
\end{proof}

%\com{please check the following argument. since we set $I=(q_i,f_2,\dots,f_l)+\c$, I think that we need not to consider $q_i+\alpha q_{i+1}$. }
By Claim \ref{claim2insec2}, we have $I_{i+2} \subsetneq I \subseteq I_i$. 
By noting that $\ell_R(I_i/I_{i+2})=2$ (see Remark \ref{remrem2.2}) and there is nothing to prove if $I=I_i$, we may write $I=(q_i)+I_{i+2}$. Let $q_{i+1}\in R$ such that $v(q_{i+1})=a_{i+1}$. 
By noting that $I_iI_{i+2}=q_iI_{i+2}$, we obtain that 
\begin{center}
$(q_i^{-1}q_{i+1})I_{i+2}\subseteq (q_i^{-1}) I_{i+1} I_{i+2}\subseteq (q_i^{-1}) I_i I_{i+2} = (q_i^{-1}) q_i I_{i+2}=I_{i+2}\subseteq R$ \ \ and \ \ $(q_i^{-1}q_{i+1})q_i=q_{i+1}\in R$.
\end{center}
From the above inclusions, we deduce $q_i^{-1}q_{i+1}\in R:I$. Hence, $q_{i+1}\in (R:I)I=I$.
Thus, we conclude $I=(q_i,q_{i+1})+I_{i+2}=I_i$.
\end{proof}

\begin{cor}\label{cor23}
Let $n\le 3$.
Then the equality $\T(R)=\I(R)$ holds.
In particular, $\T(R)$ is a finite set.
\end{cor}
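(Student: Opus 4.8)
The plan is to dispose of the small cases $n\le 2$ by hand and to reduce the case $n=3$ to Theorem \ref{th1}. One inclusion is free in all cases: $\I(R)\subseteq\T(R)$ by Fact \ref{fact2.3zz}(2), so it suffices to prove $\T(R)\subseteq\I(R)$; the finiteness assertion then follows at once since $\I(R)=\{I_i\mid 0\le i\le n\}$ has at most $n+1\le 4$ elements.

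For $n\le 2$ I would argue as follows. Any regular trace ideal $I$ of $R$ contains the conductor $\fkc$ by Fact \ref{fact2.3zz}(1), so $\fkc\subseteq I\subseteq R$. Since $R=I_0\supsetneq I_1\supsetneq\cdots\supsetneq I_n=\fkc$ is a composition series of $R/\fkc$ of length $n=\ell_R(R/\fkc)$ (Remark \ref{remrem2.2}), and $n\le 2$, the only ideals squeezed between $\fkc$ and $R$ are $R$, $\m$ and $\fkc$, each a member of $\I(R)$. Hence $I\in\I(R)$. (This also covers the degenerate cases $n=0$, where $\fkc=R$, and $n=1$, where $\fkc=\m$; alternatively, for $n=2$ one may simply quote Fact \ref{fact2.3zz}(3).)

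For $n=3$ I would invoke Theorem \ref{th1}, which applies precisely because $n\ge 3$, and it is enough to verify condition (2) there. The range $1\le i\le n-2$ reduces to the single value $i=1$, so I must exhibit $q_1\in R$ with $v(q_1)=a_1$ and $I_1I_{3}=q_1I_{3}$. Fix any such $q_1$. For every nonzero $x\in I_1$ we have $v(x)\ge a_1=v(q_1)$ by the definition of $I_1$, hence $v(q_1^{-1}x)\ge 0$ and $q_1^{-1}x\in\ol R$; thus $q_1^{-1}I_1\subseteq\ol R$. Since $I_3=I_n=\fkc$ is an ideal of $\ol R$, this gives $q_1^{-1}I_1I_3\subseteq\ol R\,\fkc=\fkc=I_3$, i.e. $I_1I_3\subseteq q_1I_3$; the reverse inclusion is immediate from $q_1\in I_1$. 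So condition (2) of Theorem \ref{th1} holds, and Theorem \ref{th1} yields $\T(R)=\I(R)$.

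I do not anticipate a real obstacle: the substance lies entirely in Theorem \ref{th1} and the cited facts. The only point requiring a little care is recognising that for $n=3$ hypothesis (2) of Theorem \ref{th1} is automatic — this is exactly the ``boundary'' equality $I_{n-2}I_n=q I_n$ already used in the proof of Corollary \ref{cc} — together with the small bookkeeping needed when $\fkc$ degenerates to $R$ or $\m$.
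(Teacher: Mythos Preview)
Your proof is correct and follows essentially the same route as the paper: the paper leaves Corollary~\ref{cor23} without a separate proof, relying on the observation (recorded inside the proof of Theorem~\ref{th1} and again at the start of the proof of Corollary~\ref{cc}) that condition~(2) of Theorem~\ref{th1} is automatic when $n=3$, together with Fact~\ref{fact2.3zz} for $n\le 2$. Your explicit verification that $q_1^{-1}I_1\subseteq\ol R$ and $\ol R\,\fkc=\fkc$ force $I_1I_3=q_1I_3$ is exactly the intended ``boundary'' argument.
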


We aim to apply Theorem \ref{th1} to Arf rings.
Here we say that a local ring $(R,\m)$ is \textit{Arf} if every regular integrally closed ideal $I$ satisfies $I^2=xI$ for some $x\in I$ (cf. \cite[Theorem 2.2]{Lipman}).

\begin{cor}\label{cor29} {\rm (\cite[Proposition 3.1]{ik})}
If $R$ is an Arf ring, then $\T(R)=\I(R)$.
\end{cor}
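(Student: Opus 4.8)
The goal is to deduce Corollary \ref{cor29} from Theorem \ref{th1}, so the plan is to verify that an Arf ring satisfies condition (2) of Theorem \ref{th1}, namely that for each $1\le i\le n-2$ there is an element $q_i\in R$ with $v(q_i)=a_i$ such that $I_iI_{i+2}=q_iI_{i+2}$. Recall that each $I_i=\{r\in R\mid v(r)\ge a_i\}$ is by definition a regular integrally closed ideal of $R$, so the Arf hypothesis applies to it: there exists $x_i\in I_i$ with $I_i^2=x_iI_i$. First I would observe that such an $x_i$ must satisfy $v(x_i)=a_i$; indeed $v(x_i)\ge a_i$ since $x_i\in I_i$, and if $v(x_i)>a_i$ then picking any $r\in I_i$ with $v(r)=a_i$ (which exists by Remark \ref{remrem2.2}(1)) gives $v(r^2)=2a_i<v(x_i)+a_i=v(x_ir)$, so $r^2\in I_i^2=x_iI_i$ would force $v(r^2)\ge v(x_i)+a_i$, a contradiction. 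So we may take $q_i:=x_i$.

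\textbf{Key step: from $I_i^2=q_iI_i$ to $I_iI_{i+2}=q_iI_{i+2}$.} The containment $q_iI_{i+2}\subseteq I_iI_{i+2}$ is clear since $q_i\in I_i$. For the reverse, I would use $I_i^2=q_iI_i$ to get $q_i^{-1}I_i\subseteq I_i:I_i$, hence $q_i^{-1}I_iI_{i+2}\subseteq (I_i:I_i)I_{i+2}$. Since $I_{i+2}\subseteq I_i$ and both are trace ideals (Fact \ref{fact2.3zz}(2), as $I_i,I_{i+2}\in\I(R)$), Lemma \ref{prelim2} gives $(I_i:I_i)I_{i+2}=I_{i+2}$, whence $q_i^{-1}I_iI_{i+2}\subseteq I_{i+2}$, i.e. $I_iI_{i+2}\subseteq q_iI_{i+2}$. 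This establishes condition (2) for every $i$, so Theorem \ref{th1} yields $\T(R)=\I(R)$. (Note $n\le 3$ is already covered by Corollary \ref{cor23}, but the argument above works uniformly for all $n\ge 1$, since for $n\le 3$ the range $1\le i\le n-2$ of indices requiring verification is empty or handled by Lemma \ref{lem2.5}.)

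\textbf{Main obstacle.} The only genuinely delicate point is the valuation argument showing $v(q_i)=a_i$, i.e. that the $x$ witnessing the Arf condition $I_i^2=xI_i$ can be taken to have minimal value in $I_i$; everything else is a short manipulation of the colon-ideal identities and Lemma \ref{prelim2}. One should double-check that $I_i^2=x_iI_i$ does force $x_i$ to be among the minimal-value elements — the argument above via comparing $v(r^2)$ with $v(x_ir)$ for an $r$ of minimal value does this cleanly. After that, the reduction to Theorem \ref{th1}(2) is immediate and the corollary follows. Alternatively, one can simply invoke Lemma \ref{lem2.9}: $I_i^2=x_iI_i$ is equivalent to $I_i:I_i=x_i^{-1}I_i$, and then the needed equality $I_iI_{i+2}=x_iI_{i+2}$ is exactly the statement that $I_{i+2}:I_i=x_i^{-1}I_{i+2}$, which by Lemma \ref{lem2.9} is equivalent to $I_{i+2}:I_i\subseteq x_i^{-1}I_{i+2}$; but $I_{i+2}:I_i\subseteq I_{i+2}:I_{i+2}=R:I_{i+2}\subseteq R:I_i=I_i:I_i=x_i^{-1}I_i$... this routes through the same facts, so I would present the direct argument above.
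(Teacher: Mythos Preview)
Your proof is correct and follows essentially the same route as the paper: verify condition (2) of Theorem \ref{th1} by passing from the Arf identity $I_i^2=q_iI_i$ to $I_iI_{i+2}=q_iI_{i+2}$, then conclude. The only differences are that the paper invokes Lemma \ref{prelim3} (with $I=J=I_i$ and $L=I_{i+2}$) for the key step, whereas you unwind that lemma's content directly via Lemma \ref{prelim2}; and you explicitly justify $v(q_i)=a_i$, a point the paper's one-line proof leaves implicit.
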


\begin{proof}
Since $R$ is an Arf ring, $I_i^2=q_i I_i$ for all $1 \le i \le n-2$. By Lemma \ref{prelim3}, $I_iI_{i+2}=q_iI_{i+2}$ for all $1 \le i \le n-2$. Hence, the assertion follows from Theorem \ref{th1}.
\end{proof}

The following theorem shows that the finiteness of $\T(R)$ is inherited by that of $\T(I_i:~I_i)$. %It should be mentioned that the Arf property (\cite{}) and the far-flung Gorenstein property (\cite{HKS2}) are also inherited by those of $\T(I_i:I_i)$  (\cite{???}). 

\begin{thm} \label{tt}
Assume the equality $\I(R)=\T(R)$ holds.
Let $1 \le i \le n$ and $q_i\in R$ be an element such that $v(q_i)=a_i$.
Then
\[
\T(I_i:I_i)=
\begin{cases}
\{ q_i^{-1}I_{j} \mid i \le j \le n \} & \text{if $I_i^2=q_iI_i$} \\
\{ q_i^{-1}I_{j} \mid i+2 \le j \le n \} \cup \{ I_i:I_i\} & \text{if $I_i^2\ne q_iI_i$.}
\end{cases}
\]
In particular, $\T(I_i:I_i)$ is a finite set.
\end{thm}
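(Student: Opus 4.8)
The plan is to obtain both branches of the formula as direct consequences of the structural Corollaries~\ref{c111} and~\ref{c112}, splitting according to whether $I_i^2=q_iI_i$. First I would record the ambient data implied by $\I(R)=\T(R)$: we then have $\T(R)=\{I_j\mid 0\le j\le n\}$, forming a chain $R=I_0\supseteq I_1\supseteq\cdots\supseteq I_n=\c$ with $\ell_R(I_j/I_{j+1})=1$ (Remark~\ref{remrem2.2}), so for $0\le j\le n$ one has $I_j\subseteq I_i$ precisely when $j\ge i$. Also $q_i\in I_i$ since $v(q_i)=a_i$, while $q_i\notin I_{i+2}$ when $i\le n-2$ because $a_i<a_{i+2}$. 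Finally $S:=I_i:I_i$ is a ring with $R\subseteq S\subseteq\ol{R}$, hence module-finite and integral over $R$; since $\ol{R}$ is integral over $S$ and local, the usual lying-over argument shows $S$ is local.

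For \emph{Case 1}, where $I_i^2=q_iI_i$, I would apply Corollary~\ref{c111} to the regular trace ideal $I_i$ with $q=q_i$, which gives $\T(I_i:I_i)=\{q_i^{-1}Y\mid Y\in\T(R),\ Y\subseteq I_i\}$; by the inclusion observation this is exactly $\{q_i^{-1}I_j\mid i\le j\le n\}$. For \emph{Case 2}, where $I_i^2\ne q_iI_i$, Lemma~\ref{lem2.5} forces $1\le i\le n-2$, so $I_{i+2}$ is a regular trace ideal with $I_{i+2}\subseteq I_i$ and $\ell_R(I_i/I_{i+2})=2$ (Remark~\ref{remrem2.2}); moreover $q_i\in I_i\setminus I_{i+2}$, and, crucially, $I_iI_{i+2}=q_iI_{i+2}$ holds by Theorem~\ref{th1}(3) since $\I(R)=\T(R)$. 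Together with the locality of $I_i:I_i$ and the standing assumption $I_i^2\ne q_iI_i$, these check every hypothesis of Corollary~\ref{c112} with $I=I_i$, $J=I_{i+2}$, $q=q_i$, yielding $\T(I_i:I_i)=\{q_i^{-1}Y\mid Y\in\T(R),\ Y\subseteq I_{i+2}\}\cup\{I_i:I_i\}=\{q_i^{-1}I_j\mid i+2\le j\le n\}\cup\{I_i:I_i\}$. In either case the resulting set is manifestly finite, giving the last assertion.

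I expect the only genuine content, as opposed to bookkeeping, to be the recognition that the equality $I_iI_{i+2}=q_iI_{i+2}$ required to invoke Corollary~\ref{c112} is exactly condition~(3) of Theorem~\ref{th1}; thus it is precisely the hypothesis $\T(R)=\I(R)$ that powers the argument. The small point one must not skip is the locality of $I_i:I_i$ (needed both for Corollary~\ref{c112} and, underneath it, for Lemma~\ref{lem2.10}). Everything else is an unwinding of the trace-ideal correspondences set up in Section~\ref{section1}.
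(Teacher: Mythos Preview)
Your proof is correct and follows essentially the same approach as the paper: both arguments reduce to Corollaries~\ref{c111} and~\ref{c112}, using the locality of $I_i:I_i$ and the equality $I_iI_{i+2}=q_iI_{i+2}$ supplied by Theorem~\ref{th1}. The only difference is organizational---the paper splits on $i\in\{n-1,n\}$ versus $i\le n-2$, while you split directly on whether $I_i^2=q_iI_i$, which matches the statement of the theorem more cleanly and in particular explicitly covers the case $i\le n-2$ with $I_i^2=q_iI_i$ (where the paper's wording is a bit terse).
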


\begin{proof}
Note that all intermediate rings between $R$ and $\ol{R}$ is a local ring because $\ol{R}$ is a local ring and finitely generated as an $R$-module. In particular, $I_i:I_i$ is a local ring for all $1 \le i \le n$.

Suppose that either $i=n$ or $n-1$. Then, the equality $I_i^2=q_iI_i$ holds by Lemma \ref{lem2.5}. 

%In addition, if $i=n$ or $n-1$, then the equality $I_i^2=q_iI_i$ holds.
%Actually, in the case of $i=n$, it is clear.
%In the case of $i=n-1$, since $I_i=(q_i)+I_n$, we only need to check $I_n^2\subseteq q_iI_i$.
%Since $q_i^{-1}I_n^2\subseteq \{x\in Q(R)\mid v(x)\ge 2a_n-a_{n-1}(\ge a_n)\}\subseteq I_n\subseteq I_{n-1}$, we see the inclusion $I_n^2\subseteq q_iI_i$.
Thus, the assertion follows by Corollary \ref{c111}.
Now let $1\le i\le n-2$.
By Theorem \ref{th1}, we have an equality $I_iI_{i+2}=q_iI_{i+2}$.
Note that $q_i\in I_i\setminus I_{i+2}$ and $\ell(I_i/I_{i+2})=2$.
Therefore, the assertion can be derived from Corollary \ref{c112}.
\end{proof}

Note that the converse of Theorem \ref{tt} does not hold in general:

\begin{ex}
Let $R=k[[t^5,t^6,t^7]]$ be a numerical semigroup ring over an infinite field $k$.
Then $\T(R)$ is infinite (see Example \ref{ex76}), but as $\m:\m$ is equal to $k[[t^5,t^6,t^7,t^8,t^9]]$, which is an Arf ring, we see that $\T(\m:\m)$ is finite.
\end{ex}

\section{Trace ideals of numerical semigroup rings}\label{section2}

In this section we focus on numerical semigroup rings. Throughout this section, let $H\subseteq \mathbb{N}$ be a numerical semigroup. Then, $H$ defines a local $k$-subalgebra
\[
R=k[[H]]=k[[t^h \mid h\in H]]\subseteq k[[t]],
\]
where $k[[t]]$ is the formal power series ring over a field $k$. Then $R$ satisfies the assumption written in the beginning of Section \ref{newsection2}; hence, we reuse the notation of Setup \ref{setup}.
Note that $H$ is equal to the value semigroup $v(R)$ of $R$.

\begin{thm} \label{t24}
Let $n\ge 3$.
The following conditions are equivalent.
\begin{enumerate}[\rm(1)]
\item $\mathrm{T}(R)=\I(R)$.
\item $I_iI_{i+2}=(t^{a_i})I_{i+2}$ for all $i\in\{1,\dots,n-2\}$.
\item $a_j+a_{i+1}-a_i\in H$ for all $i\in\{1,\dots,n-2\}$ and $j\in\{i+2,\dots,n\}$.
%\item $a_l+a_j-a_i\in H$ for all $i\in\{1,\dots,n-3\}$, $j\in\{i+1,\dots,n-1\}$ and $l\in\{i+2,\dots,n-1\}$.
\end{enumerate}
If the residue field $k$ is infinite, then the following is also equivalent to the above conditions.
\begin{enumerate}[\rm(4)] 
\item $\T(R)$ is a finite set. 
\end{enumerate}

\end{thm}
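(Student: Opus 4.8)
The plan is to reduce Theorem \ref{t24} to Theorem \ref{th1} by unwinding the conditions into the monomial language of numerical semigroup rings. The equivalences (1)$\Leftrightarrow$(2) (and (4) when $k$ is infinite) follow almost verbatim from Theorem \ref{th1}: for a numerical semigroup ring, $t^{a_i}$ is a distinguished element of value $a_i$, so condition (2) here is just an instance of condition (3) of Theorem \ref{th1}, and we invoke Proposition \ref{pp} to upgrade ``for $q_i=t^{a_i}$'' to ``for all $q_i$''. Thus the only genuine work is the new equivalence (2)$\Leftrightarrow$(3), which translates the ideal equality $I_iI_{i+2}=(t^{a_i})I_{i+2}$ into the purely numerical statement $a_j+a_{i+1}-a_i\in H$ for $i+2\le j\le n$.

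First I would record the monomial description: since everything is $\mathbb N$-graded via $t$, the ideal $I_m=\{r\in R\mid v(r)\ge a_m\}$ has $k$-basis $\{t^h\mid h\in H,\ h\ge a_m\}$, and a product $I_mI_{m'}$ has $k$-basis $\{t^{h+h'}\mid h,h'\in H,\ h\ge a_m,\ h'\ge a_{m'}\}$. The inclusion $I_iI_{i+2}\subseteq (t^{a_i})I_{i+2}$ is automatic since $t^{a_i}\in I_i$; so (2) amounts to the reverse inclusion $(t^{a_i})I_{i+2}\subseteq I_iI_{i+2}$, i.e.\ $t^{a_i}\cdot t^{a_j}\in I_iI_{i+2}$ for every $j$ with $j\ge i+2$ (these monomials, together with the conductor part, generate $(t^{a_i})I_{i+2}$, since the $a_j$ for $i+2\le j$ together with $\{a_n+\ell\}$ exhaust $H_{\ge a_{i+2}}$). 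So I must determine exactly when $t^{a_i+a_j}\in I_iI_{i+2}$.

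Next, the decisive observation: $t^{a_i+a_j}\in I_iI_{i+2}$ iff $a_i+a_j=h+h'$ with $h,h'\in H$, $h\ge a_i$, $h'\ge a_{i+2}$. One direction of the claimed criterion is easy --- if $a_j+a_{i+1}-a_i=:b\in H$, then $b\ge a_{i+1}+1>a_i$ forces $b\ge a_{i+1}$... but I actually need $h'\ge a_{i+2}$, so I would instead write $a_i+a_j=a_{i+1}+(a_j+a_{i+1}-a_i)$ wait, that is wrong; rather $a_i+a_j=h+h'$ with $h=a_i$, $h'=a_j$ works only if $a_j\ge a_{i+2}$, which holds, so actually $t^{a_i+a_j}\in I_iI_{i+2}$ is automatic for $j\ge i+2$! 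That cannot be right, so the subtlety is that $(t^{a_i})I_{i+2}$ is generated over $I_iI_{i+2}$ not by those monomials but the condition to check is for generators of $I_{i+2}$ of the form $t^{a_{i+1}}\cdot(\text{something})$ --- more precisely, the honest reformulation of $(t^{a_i})I_{i+2}=I_iI_{i+2}$ is that every $t^{a_i}\cdot t^{c}$ with $c\in H$, $c\ge a_{i+2}$ lies in $I_iI_{i+2}$, AND this is the content; but since $I_i=(t^{a_i})+I_{i+1}$, we get $I_iI_{i+2}=(t^{a_i})I_{i+2}+I_{i+1}I_{i+2}$, so (2) is equivalent to $I_{i+1}I_{i+2}\subseteq (t^{a_i})$, i.e.\ $t^{a_i}$ divides every $t^{h+h'}$ with $h\ge a_{i+1}$, $h'\ge a_{i+2}$ --- equivalently $h+h'-a_i\in H$ for all such, and the extremal case $h=a_{i+1}$, $h'=a_j$ ($i+2\le j\le n$, the cases $h'>a_n$ being automatic since then $h+h'-a_i>a_n$) gives exactly condition (3). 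I would present this carefully, checking that the minimal-value generators $a_{i+1},a_j$ suffice by monotonicity of $H$ above $a_n$.

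The main obstacle, and the step I would be most careful about, is this bookkeeping: showing that the single inclusion $I_{i+1}I_{i+2}\subseteq (t^{a_i})$ (a) is equivalent to (2), using $I_i=(t^{a_i})+I_{i+1}$ and $t^{a_i}I_{i+2}\subseteq(t^{a_i})$ trivially, and (b) reduces to checking finitely many monomial memberships $a_{i+1}+a_j-a_i\in H$ for $i+2\le j\le n$, with the tail $a_j>a_n$ handled because then the difference already exceeds $a_n$ hence lies in $H$. Once this translation is in place, combining with Theorem \ref{th1} closes all the equivalences, and the ``in particular'' finiteness statement in (4) is immediate from (1) since $\I(R)=\{I_0,\dots,I_n\}$ is finite.
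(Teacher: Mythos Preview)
Your reduction of (1)$\Leftrightarrow$(2) (and (4)) to Theorem \ref{th1} is fine, and the direction (2)$\Rightarrow$(3) is exactly as you describe. The gap is in (3)$\Rightarrow$(2).

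You correctly rewrite ``$I_iI_{i+2}=(t^{a_i})I_{i+2}$'' as ``$I_{i+1}I_{i+2}\subseteq (t^{a_i})$'' using $I_i=(t^{a_i})+I_{i+1}$. But then you claim this further reduces to the single generator $h=a_{i+1}$, i.e.\ to ``$t^{a_{i+1}}I_{i+2}\subseteq(t^{a_i})$'', which is indeed condition (3) for that $i$. That reduction is not justified: since $I_{i+1}=(t^{a_{i+1}})+I_{i+2}$, you still have to show $I_{i+2}^2\subseteq(t^{a_i})$, and in particular that $2a_{i+2}-a_i\in H$. Nothing in condition (3) for the single index $i$ says this, and ``monotonicity of $H$ above $a_n$'' only disposes of the tail $h'>a_n$, not of the products with $h\in\{a_{i+2},\dots,a_n\}$.

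What the paper does (and what you are missing) is a descending induction on $i$, feeding (3) into Proposition \ref{pp}(3)$\Rightarrow$(1). The paper first observes that (3) is literally the statement $t^{a_{i+1}}I_{i+2}\subseteq(t^{a_i})$ for each $i$. Then: the base case $I_{n-2}I_n=t^{a_{n-2}}I_n$ is automatic, and assuming $I_{i+1}I_{i+3}=t^{a_{i+1}}I_{i+3}$, Proposition \ref{pp} (with $s=t^{a_{i+1}}$, $q=t^{a_i}$) upgrades the hypothesis $t^{a_{i+1}}I_{i+2}\subseteq(t^{a_i})$ to the desired $I_iI_{i+2}=t^{a_i}I_{i+2}$. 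The mechanism inside Proposition \ref{pp} is exactly the piece you skipped: it shows $t^{-a_i}I_{i+2}^2=t^{-a_{i+1}}I_{i+2}\cdot(t^{a_{i+1}-a_i}I_{i+2})\subseteq t^{-a_{i+1}}I_{i+2}I_{i+3}\subseteq I_{i+3}$, where the last inclusion uses the inductive hypothesis. So your plan is salvageable, but the ``extremal $h=a_{i+1}$ suffices'' step needs this induction; it is not a bookkeeping detail.
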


\begin{proof}

The equivalence of (1), (2), and (4) follows by Theorem \ref{th1}.

(2) $\Rightarrow$ (3): Assume (2).
This means that $t^{-a_i}I_iI_{i+2}=I_{i+2}$ for each $i=1,\dots,n-2$.
Then, the elements $t^{a_{i+1}}\in I_{i+1}$ and $t^{a_j}\in I_{i+2}$, where $j\in\{i+2,\dots,n\}$, satisfy $t^{-a_i}t^{a_{i+1}}t^{a_j}\in I_{i+2}\subseteq R$.
It shows that $a_j+a_{i+1}-a_i\in H$.

(3)$\Rightarrow$(2): Note that the assumption (3) is equivalent to saying that $t^{a_{i+1}} I_{i+2} \subseteq (t^{a_i})$ for all $i\in\{1,\dots,n-2\}$.
We then show that for each $i\in\{1,\dots,n-2\}$, the equality $I_iI_{i+2}=t^{a_i}I_{i+2}$ holds by descending induction on $i$.
We know that the equality $I_{n-2}I_n=t^{a_{n-2}}I_n$ always holds.
Let $i<n-2$.
By the induction hypothesis, we have $I_{i+1}I_{i+3}=(t^{a_{i+1}})I_{i+3}$.
Thanks to Proposition \ref{pp} (3)$\Rightarrow$(1), we deduce the equality $I_iI_{i+2}=t^{a_i}I_{i+2}$.
\end{proof}

We also note a characterization of numerical semigroups with $\mathrm{T}(R)=\I(R)$ and $n=4$ as a special case of Theorem \ref{t24}.
In Section \ref{section6}, we consider such a situation again with paying attention to reflexive ideals.

%\old{
%In Section \ref{section6}, we focus on the case of $n=4$. Hence, we note the case where $n=4$ of Theorem \ref{t24}.
%}

\begin{cor}\label{cor32}
Assume $k$ is infinite and $n=4$.
Then the following conditions are equivalent.
\begin{enumerate}[\rm(1)]
\item $\mathrm{T}(R)$ is finite.
\item $\mathrm{T}(R)=\I(R)$.
\item $a_2-a_1\ge a_4-a_3$.
\end{enumerate}
\end{cor}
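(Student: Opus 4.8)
The plan is to derive Corollary \ref{cor32} directly as the specialization of Theorem \ref{t24} to the case $n=4$, so essentially nothing new is needed beyond unwinding the range of indices in condition (3) of that theorem. First I would invoke Theorem \ref{t24}: since $n=4\ge 3$ and $k$ is infinite, the conditions ``$\mathrm{T}(R)$ is finite'', ``$\mathrm{T}(R)=\I(R)$'', and ``$a_j+a_{i+1}-a_i\in H$ for all $i\in\{1,\dots,n-2\}$ and $j\in\{i+2,\dots,n\}$'' are all equivalent. This immediately gives (1)$\Leftrightarrow$(2), and reduces the proof to showing that the semigroup condition in Theorem \ref{t24}(3) is equivalent to the single inequality $a_2-a_1\ge a_4-a_3$ when $n=4$.

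Next I would enumerate the relevant pairs $(i,j)$. With $n=4$, the index $i$ ranges over $\{1,2\}$, and $j$ over $\{i+2,\dots,4\}$. For $i=2$ the only value is $j=4$, which forces the condition $a_4+a_3-a_2\in H$; but this holds automatically, since $a_3+a_4-a_2 \ge a_4$ (as $a_3\ge a_2$), and every integer $\ge a_4 = a_n$ lies in $H$ by the definition of $n$ in Setup \ref{setup}(3) — equivalently $I_{n-2}I_n=t^{a_{n-2}}I_n$ always holds, as noted in the proof of Theorem \ref{t24}. For $i=1$ the values are $j=3$ and $j=4$. The case $j=4$ gives $a_4+a_2-a_1\in H$, which again is automatic because $a_2\ge a_1$ forces $a_4+a_2-a_1\ge a_4 = a_n\in H$. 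Thus the only nontrivial requirement is the case $i=1$, $j=3$, namely $a_3+a_2-a_1\in H$.

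Finally I would show that $a_3+a_2-a_1\in H$ is equivalent to $a_2-a_1\ge a_4-a_3$. Since $a_2 > a_1$ we have $a_3 < a_3+a_2-a_1$, so $a_3+a_2-a_1$ is an element of $H$ strictly larger than $a_3=a_{n-1}$; the elements of $H$ exceeding $a_3$ are exactly $a_4, a_5, a_6, \dots$, i.e. $a_4$ and every integer $\ge a_4$. Hence $a_3+a_2-a_1\in H$ if and only if $a_3+a_2-a_1 = a_4$ or $a_3+a_2-a_1\ge a_4$ (noting $a_5 = a_4+1$, $a_6 = a_4 + 2$, etc. fill in all integers from $a_4$ onward), which is just $a_3+a_2-a_1\ge a_4$, i.e. $a_2-a_1\ge a_4-a_3$. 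This establishes (3)$\Leftrightarrow$(2), completing the chain. I do not anticipate any real obstacle here; the only point requiring a little care is correctly identifying which of the finitely many semigroup membership conditions in Theorem \ref{t24}(3) are automatically satisfied and which one is the genuine constraint, so I would state explicitly that all pairs with $j=4$ (and more generally $j=n$) are free because $a_j+a_{i+1}-a_i\ge a_n$ whenever $j=n$.
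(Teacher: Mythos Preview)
Your proof is correct and follows essentially the same approach as the paper: specialize Theorem \ref{t24} to $n=4$, observe that the conditions $a_4+a_2-a_1\in H$ and $a_4+a_3-a_2\in H$ are automatic since these quantities exceed $a_4$, and then note that $a_3+a_2-a_1\in H$ is equivalent to $a_3+a_2-a_1\ge a_4$ because $a_3+a_2-a_1>a_3$.
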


\begin{proof}
Since $n=4$, the condition (3) of Theorem \ref{t24} is stated as follows:
\[
a_3+a_{2}-a_1, \ \  a_4+a_2-a_1, \ \  a_4+a_{3}-a_2\in H.
\]
Since the last two of the above is larger than $a_4$, $a_4+a_2-a_1$ and $a_4+a_{3}-a_2$ are automatically in $H$. Furthermore, we have $a_3<a_3+a_{2}-a_1$. Hence, $a_3+a_{2}-a_1\in H$ if and only if $a_3+a_{2}-a_1\ge a_4$. Therefore, the assertion follows from Theorem \ref{t24}.
\end{proof}

By using Theorem \ref{t24} and Corollary \ref{cor32}, we obtain infinitely many  rings $R$ satisfying $\mathrm{T}(R)=\I(R)$ other than Arf rings (see Corollary \ref{cor29}). Since Arf rings have minimal multiplicity, we explored rings that are not of minimal multiplicity. 
Although, at least our knowledge, we are not able to describe every numerical semigroups satisfying the conditions above by giving their systems of minimal generators, we note some of them.

\begin{ex} \label{e43}
	The following numerical semigroup rings $R$ satisfy $\T(R)=\I(R)$ and $n=4$, but are not of minimal multiplicity. Let $k$ be a field.
	\begin{enumerate}[\rm(1)] 
		\item $R=k[[t^{11}, t^{14}, t^{18}, t^{20}, t^{21}, t^{23}, t^{24}, t^{26}, t^{27}, t^{30}]]$.
		\item $R=k[[t^{9}, t^{12}, t^{16}, t^{19}, t^{20}, t^{22}, t^{23}, t^{26}]]$.
		\item $R=k[[t^{5}, t^{8}, t^{12}, t^{14}]]$.
	\end{enumerate}
\end{ex}

\if0
\begin{prob}
Give a classification of $H$ with $a_j+a_{i+1}-a_i\in H$ for all $i\in\{1,\dots,n-3\}$ and $j\ge i+2$.
\end{prob}

\begin{rem}
Arf numerical semigroup $H$ satisfies $a_j+a_{i+1}-a_i\in H$ for all $i\in\{1,\dots,n-3\}$ and $j\ge i+2$ by \cite{}.
\end{rem}
\fi

\begin{ex}\label{ex36}
Let $n \ge 3$ be an integer, and let 
\[
H= \{ 0 \} \cup \{ 3n + 3i \in \mathbb{N} \mid \text{$0 \le i \le n -1$, but $i\ne 2$ }\} \cup \{ j\in \mathbb{N} \mid j \ge 6n \}
\]
be a numerical semigroup. Set $R=k[[H]]$. Then $\T(R)=\I(R)$ and $\ell_R(R/(R:\ol{R}))=n$. Furthermore, $R$ does not have minimal multiplicity. In particular, $R$ is not an Arf ring.
\end{ex}

\begin{proof}
We have 
\begin{align*}
a_1=3n, a_2=3n+3, a_3&=3n+9, a_4=3n+12, \dots, a_{n-1}=6n-3, \quad \text{and} \\
a_{n+k}&=6n+k \ \ \text{ for all } k\ge 0.
\end{align*}
Hence, $\ell_R(R/(R:\ol{R}))=n$. By noting that $a_{i+1}-a_i$ is either $3$ or $6$, where $i\in\{1, 2, \dots, n-3\}$, we obtain that for all $j\in\{i+2,\dots,n-1\}$, $a_j+a_{i+1}-a_i$ is either $a_j+ 3$ or $a_j+6$. In both cases, we have $a_j+a_{i+1}-a_i\in H$. It follows that $H$ satisfies Theorem \ref{t24} (3), thus $\T(R)=\I(R)$. 

Since $a_{n+6}=6n+6=2(3n+3)=2a_2$, it is straightforward to check that $R$ does not have minimal multiplicity.
\end{proof}

%To end the section, we put a remark on relation between $\T(R)$ and $\T(k[[v(R)]])$, where $(R, \fkm, k)$ is an analytically irreducible local domain of dimension one, not necessary a numerical semigroup ring as Section \ref{newsection2}.
Let $(R, \fkm, k)$ be an analytically irreducible local domain of dimension one. In what follows, we note the relation between the conditions $\T(R)=\I (R)$ and $\T(k[[v(R)]]) = \I(k[[v(R)]])$. %Until the end of this section, we reuse the notation of Setup \ref{setup}.

\begin{rem}\label{rem37}
Let $(R,\m,k)$ be an analytically irreducible local domain of dimension one as Section \ref{newsection2}. We reuse the notation of Setup \ref{setup}. Suppose that $\T(R)=\I(R)$. Then, $\T(k[[H]])=\I(k[[H]])$. 
\end{rem}

\begin{proof}
Since $\T(R)=\I(R)$, we have $I_iI_{i+2}=q_iI_{i+2}$ for all $i\in\{1,\dots,n-2\}$. It follows that for all $j\in\{i+2, \dots, n\}$, 
\[
q_{i+1}q_j\in I_iI_{i+2}=q_iI_{i+2} \subseteq (q_i).
\]
Hence, $q_i^{-1}q_{i+1}q_j\in R$. Thus, $-a_i+a_{i+1}+a_j\in H$. This concludes the assertion by Theorem \ref{t24}.
\end{proof}

On the other hand, the converse of the assertion in Remark \ref{rem37} does not hold in general.

\begin{ex}
Let $R=k[[t^{15}+t^{16}, t^{18}, t^{24}, t^{27}, t^{n} \mid n\ge 30]]$. Then $v(R)=\{0, 15, 18, 24, 27\}\cup \{n \mid n\ge 30\}$. Set $H=v(R)$. Note that $k[[H]]$ is the ring of Example \ref{ex36}, where $n=5$. Hence, $\T(k[[H]])=\I(k[[H]])$. On the other hand, one can obtain that $\T(R)\supsetneq \I(R)$.

Indeed, assume that $\T(R)= \I(R)$. Then, we have $I_1 I_3 = (t^{15}+t^{16})I_3$ by Theorem \ref{th1}. It follows that 
\[
t^{42}=t^{18}t^{24}\in I_1 I_3 = (t^{15}+t^{16})I_3\subseteq (t^{15}+t^{16})I_3 + t^{44}\ol{R}=(t^{39}+t^{40}, t^{42}+t^{43}) + t^{44}\ol{R}.
\]
Hence, we can write $t^{42}=f(t^{39}+t^{40}) + g(t^{42}+t^{43}) +h$, where $f, g\in R$ and $h\in t^{44}\ol{R}$. Write $f=a+f_1$ and $g=b+g_1$, where $a,b\in k$ and $f_1, g_1\in R$ with $v(f_1), v(g_1) \ge 15$. Then 
\[
t^{42}-a(t^{39}+t^{40}) - b(t^{42}+t^{43}) = f_1(t^{39}+t^{40}) + g_1(t^{42}+t^{43}) +h\in t^{44}\ol{R}.
\]
This is impossible. Hence, $I_1 I_3 \ne (t^{15}+t^{16})I_3$. It follows that $\T(R)\supsetneq \I(R)$.
\end{ex}

\section{Trace ideals over fiber products} \label{section4}

In this section, we discuss trace ideals over fiber products of local rings as a trial for the case of non-domains.
Let 
\[
R=R_1\times_k R_2
\] be a fiber product of Noetherian local rings $(R_1,\n_1,k)$ and $(R_2,\n_2,k)$ over $k$, i.e. $R$ is a subring $\{(s,t)\in R_1\times R_2\mid \pi_1(s)=\pi_2(t)\}$ of a usual product $R_1\times R_2$, where $\pi_1\colon R_1 \to k$ and $\pi_2\colon R_2\to k$ are canonical surjections.
%Then $R$ is a Cohen--Macaulay local ring of dimension one (\cite{???}). 
Let $\fkm$ denote the maximal ideal of $R$. %and $k$ denote the residue field. 
The canonical maps $p_1\colon R\to R_1$ and $p_2\colon R\to R_2$ are surjective homomorphisms of rings. In addition, there are isomorphisms 
\begin{center}
	$i_1\colon\n_1 \cong \Ker p_2 = \fkn_1 \times (0)$ \ \  and \ \  $i_2\colon\n_2 \cong \Ker p_1=(0) \times \fkn_2$
\end{center} 
as $R$-modules.
And $\m$ has a decomposition $\m=\Ker p_2\oplus \Ker p_1$ as an $R$-module.

\begin{thm} \label{th51}
Let $(R_1,\n_1,k)$ and $(R_2,\n_2,k)$ be (not necessarily one--dimensional Cohen--Macaulay) local rings with positive depth. Let $R$ be a fiber product $R_1\times_k R_2$ of $R_1$ and $R_2$ over $k$.
Then 
\[
\T(R)=\{i_1(I)\oplus i_2(J)\mid I\in X_1,\ J\in X_2\}\cup\{R\},
\]
where $X_1$ and $X_2$ are defined as follows:
\begin{enumerate}[\rm(1)]
\item If $R_1$ (resp. $R_2$) is a discrete valuation ring, then $X_1=\{\fkn_1\}$ (resp. $X_2=\{\fkn_2\})$.
\item If $R_1$ (resp. $R_2$) is not a discrete valuation ring, then $X_1=\T(R_1)\setminus\{R_1\}$ (resp. $X_2=\T(R_2)\setminus\{R_2\}$).
\end{enumerate}
\end{thm}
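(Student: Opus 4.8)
The plan is to reduce the whole statement to componentwise colon computations inside $Q(R)=Q(R_1)\times Q(R_2)$, exploiting the idempotents $e_1=(1,0)$ and $e_2=(0,1)$ of $R_1\times R_2$ together with Lemma \ref{prelim1} — whose equivalence ``$I$ is a trace ideal'' $\Leftrightarrow$ ``$R:I=I:I$'' needs only the definition of a trace ideal and so holds for any regular ideal of any Noetherian ring; I would apply it to $R$, $R_1$, and $R_2$ alike. First I would record the basic structure: since $R_1,R_2$ have positive depth, $\fkn_1,\fkn_2$ contain non-zerodivisors, from which one sees that the non-zerodivisors of $R$ are exactly the pairs $(s,t)$ with $s,t$ non-zerodivisors of $R_1,R_2$, that $Q(R)=Q(R_1)\times Q(R_2)$, that $e_1,e_2\in Q(R)$, and that $\fkm=\fkn_1\times\fkn_2$ with $\Ker p_2=i_1(\fkn_1)=\fkn_1\times(0)$ and $\Ker p_1=i_2(\fkn_2)=(0)\times\fkn_2$.

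The computational heart is the following. For regular ideals $I_1\subseteq\fkn_1$ of $R_1$ and $I_2\subseteq\fkn_2$ of $R_2$, the set $\fka:=I_1\times I_2$ is a regular ideal of $R$ contained in $\fkm$, and
\[
R:\fka=(\fkn_1:I_1)\times(\fkn_2:I_2),\qquad \fka:\fka=(I_1:I_1)\times(I_2:I_2),
\]
the colons on the right being computed in $Q(R_1)$ and $Q(R_2)$. The second equality is immediate since $\fka$ is a ``rectangle''. The first is the delicate point: $(q_1,q_2)$ carries $\fka$ into $R_1\times R_2$ precisely when $q_jI_j\subseteq R_j$ for $j=1,2$, while the fibre-product compatibility $\pi_1(q_1a_1)=\pi_2(q_2a_2)$ — demanded for \emph{all} $a_1\in I_1$ and $a_2\in I_2$ — forces $\pi_j(q_jI_j)$ to be a single point of $k$, hence $0$, so in fact $q_jI_j\subseteq\fkn_j$. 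Comparing these formulas with Lemma \ref{prelim1}, $\fka$ is a trace ideal of $R$ if and only if $\fkn_j:I_j=I_j:I_j$ for $j=1,2$. I would then identify this condition: for a regular ideal $I_j\subseteq\fkn_j$, the equality $\fkn_j:I_j=I_j:I_j$ is equivalent to $I_j\in X_j$. If $R_j$ is not a discrete valuation ring it just says that $I_j$ is a trace ideal of $R_j$ — indeed if $qI_j\subseteq R_j$ but $qI_j\not\subseteq\fkn_j$ then $qI_j=R_j$, so $I_j$ is principal, $\fkn_j$ is principal, and $R_j$ is a discrete valuation ring, a contradiction; hence $R_j:I_j=\fkn_j:I_j=I_j:I_j$ and $I_j\in\T(R_j)\setminus\{R_j\}$. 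If $R_j$ is a discrete valuation ring, then $I_j=\fkn_j^m$ for some $m\ge1$ and the condition collapses to $m=1$, i.e. $I_j=\fkn_j$.

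Granting this, the inclusion ``$\supseteq$'' is immediate: $R$ is a trace ideal, and for $I\in X_1$, $J\in X_2$ the ideal $i_1(I)\oplus i_2(J)=I\times J$ is regular and satisfies $R:(I\times J)=(I\times J):(I\times J)$ by the above, hence is a trace ideal of $R$. For ``$\subseteq$'', take $I\in\T(R)$ with $I\ne R$; then $I\subseteq\fkm=\fkn_1\times\fkn_2$, so every $(a,b)\in I$ has $a\in\fkn_1$, whence $(a,0)\in R$ and $e_1I\subseteq R$. Thus $e_1\in R:I=I:I$ by Lemma \ref{prelim1}, and $e_2=1-e_1\in I:I$ as well, so $I=e_1I\oplus e_2I$ with $e_1I\subseteq R\cap(R_1\times(0))=\Ker p_2=i_1(\fkn_1)$ and $e_2I\subseteq i_2(\fkn_2)$. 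Writing $e_1I=i_1(I_1)$ and $e_2I=i_2(I_2)$, the ideals $I_j\subseteq\fkn_j$ of $R_j$ are regular because $I$ contains a non-zerodivisor $(s,t)$ with $s\in\fkn_1$, $t\in\fkn_2$; and $R:I=I:I$ combined with the colon formulas gives $\fkn_j:I_j=I_j:I_j$, hence $I_j\in X_j$, so $I=i_1(I_1)\oplus i_2(I_2)$ has the desired form.

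The main obstacle is the first identity $R:(I_1\times I_2)=(\fkn_1:I_1)\times(\fkn_2:I_2)$: recognizing that the fibre-product compatibility forces $\fkn_j$, not $R_j$, to appear on the right. This is exactly the phenomenon that separates the discrete valuation ring case from the rest and makes the case split defining $X_1$ and $X_2$ unavoidable; once it is in place, everything else is bookkeeping with the idempotents $e_1,e_2$ and with Lemma \ref{prelim1}.
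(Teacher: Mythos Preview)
Your argument is correct and takes a genuinely different route from the paper's proof.

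The paper argues module-theoretically. For ``$\subseteq$'' it uses that any regular trace ideal $L\subsetneq R$ satisfies $L=i_1p_1(L)\oplus i_2p_2(L)$ because the surjections $L\twoheadrightarrow i_jp_j(L)$ force $i_jp_j(L)\subseteq\tr_R(L)=L$; it then shows $p_1(L)\in X_1$ by analyzing an arbitrary $f\in\Hom_{R_1}(p_1(L),R_1)$, ruling out $\Im f=R_1$ by a representation-theoretic contradiction and then composing $i_1fp_1$ to see $\Im f\subseteq p_1(L)$. For ``$\supseteq$'' it shows $\Hom_R(L,R)=\Hom_R(L,\fkm)$ (no free summand) and kills the cross $\Hom$'s using positive depth. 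Colon ideals never appear.

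Your approach is arithmetic: you pass to $Q(R)=Q(R_1)\times Q(R_2)$, pick up the idempotents $e_1,e_2$, and reduce everything to the single identity $R:(I_1\times I_2)=(\fkn_1:I_1)\times(\fkn_2:I_2)$, which you obtain cleanly by setting one coordinate to $0$ in the fibre-product compatibility. This turns Lemma~\ref{prelim1} into the componentwise condition $\fkn_j:I_j=I_j:I_j$, and the decomposition $I=e_1I\oplus e_2I$ for a proper trace ideal comes for free from $e_1\in R:I=I:I$. What your approach buys is brevity and a transparent explanation of \emph{why} $\fkn_j$ rather than $R_j$ shows up (it is exactly the fibre condition), whence the DVR case split; what the paper's approach buys is that it never needs to identify $Q(R)$ or invoke Lemma~\ref{prelim1} outside the one-dimensional Cohen--Macaulay setting.

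One step in your write-up is compressed and worth spelling out: in the non-DVR case, from $qI_j=R_j$ you conclude ``$\fkn_j$ is principal''. This is \emph{not} a consequence of $qI_j=R_j$ alone (take $I_j=(x)$ in $k[[x,y]]$); it uses the standing hypothesis $\fkn_j:I_j=I_j:I_j$. Indeed, $qI_j=R_j$ gives $I_j=(c)$, hence $I_j:I_j=R_j$ and $\fkn_j:I_j=c^{-1}\fkn_j$; the hypothesis then forces $\fkn_j=(c)$. With that line added, your equivalence $\fkn_j:I_j=I_j:I_j\Leftrightarrow I_j\in X_j$ is fully justified (the converse direction being the trivial chain $I_j:I_j\subseteq\fkn_j:I_j\subseteq R_j:I_j=I_j:I_j$).
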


\begin{proof}
($\subseteq$): Let $L$ be an ideal in $\T(R)$ with $I\not=R$.
Then one has an equality $L=i_2p_2(L)\oplus i_1p_1(L)$.
Indeed, the inclusion $L\subseteq i_2p_2(L)\oplus i_1p_1(L)$ is clear.
Since there are surjections $i_1p_1\colon L \to i_1p_1(L)$ and $i_2p_2\colon L \to i_2p_2(L)$, it yields that $i_2p_2(L)\oplus i_1p_1(L)\subseteq \tr_R(L)=L$.
Thus we only need to know what $p_1(L)$ and $p_2(L)$ are. Note that both $p_1(L)$ and $p_2(L)$ are nonzero. Indeed, if $p_1(L)=0$, then $L=i_2p_2(L)$ is annihilated by $i_1(n_1)$. This means that $L$ is not a regular ideal of $R$.

(1): If $R_1$ is a discrete valuation ring, then $p_1(L)$ is isomorphic to $R_1\cong \n_1$. Thus, we have a surjection $L\to i_1(\n_1)(\subseteq \m)$.
Therefore, $i_1(\n_1)$ is contained in $\tr_R(L)=L$.
In particular, one obtains $\n_1 \supseteq p_1(L) \supseteq p_1(i_1(\n_1))=\n_1$.

(2): Suppose that $R_1$ is not a discrete valuation ring.
What we need to prove is that $p_1(L)$ belongs to $\T(R_1)\setminus \{R_1\}$.
In order to show this, let $f\colon p_1(L)\to R_1$ be a homomorphism of modules.
Assume that $\Im f=R_1$. Then, since there exists a surjection $R_1^{\oplus a} \to \n_1$ for some integer $a>0$, we obtain the surjective homomorphism $L^{\oplus a} \to R_1^{\oplus a} \to \n_1$. 
Thus, $i_1(\n_1)$ is contained in $\tr_R(L)(=L)$, which yields that $p_1(L)= \n_1$.
It follows that $f$ induces a surjection $\n_1 \to R_1$; hence, $R_1$ is a discrete valuation ring.
This contradicts our assumption.
We now see that an inclusion $\Im f\subseteq \n_1$ holds for any homomorphism $f\in\Hom_{R_1}(p_1(L), R_1)$. 
%$f\colon p_1(L) \to R_1$.
Take the composition $i_1 f p_1 \colon L \to R$.
We have $\Im(i_1 f p_1)\subseteq \tr_R(L)=L$.
Hence, we obtain that $p_1(L) \supseteq \Im (p_1 i_1 f p_1) = \Im (p_1 i_1 f) =\Im f$.
%$\Im(i_1 f p_1)=\Im (i_1 f)$, and hence $\Im(f)\subseteq p_1(L)$.
This means that $p_1(L)$ is a trace ideal of $R_1$.

($\supseteq$): Let $L=i_1(I)\oplus i_2(J)$, where $I\in X_1$ and $J\in X_2$. Then, since $\mu_R(L)=\mu_R(i_1(I)) + \mu_R(i_2(J))>1$, $L$ has no free summands. Hence, 
\[
\Hom_R(L, R) = \Hom_R(L, \fkm)=\Hom_R(i_1(I)\oplus i_2(J), i_1(\fkn_1) \oplus i_2(\fkn_2)).
\]
Assume that $f\in \Hom_R(i_1(I), i_2(\fkn_2))$. Then, since $i_1(I)$ is annihilated by $i_2(\fkn_2)$, $\Im f$ is also annihilated by $i_2(\fkn_2)$. By noting that $\depth R_2>0$, it follows that $f=0$. By the same argument, we have $\Hom_R(i_2(J), i_1(\fkn_1))=0$. Hence, 
\[
\Hom_R(L, \fkm)=\Hom_R(i_1(I), i_1(\fkn_1)) \oplus \Hom_R(i_2(J), i_2(\fkn_2)). 
\]
Therefore, it is enough to prove that 
\begin{equation}\label{eqn411}
	\begin{split}
	\Hom_R(i_1(I), i_1(\fkn_1)) &= \Hom_R(i_1(I), i_1(I)) \quad \text{and} \\ 
	\Hom_R(i_2(J), i_2(\fkn_2)) &= \Hom_R(i_2(J), i_2(J)).
	\end{split}
\end{equation} 
Indeed, \eqref{eqn411} shows that $\Hom_R(L, \fkm)=\Hom_R(L, L)$.

If $R_1$ (resp. $R_2$) is a discrete valuation ring, then $I=\fkn_1$ (resp. $J=\fkn_2$). Hence, \eqref{eqn411} holds. 
If $R_1$ (resp. $R_2$) is not a discrete valuation ring, then $I\in \T(R_1)\setminus\{R_1\}$ (resp. $J\in \T(R_2)\setminus\{R_2\}$). In any case, \eqref{eqn411} holds. 
This completes the proof.
\end{proof}

\begin{cor}
Let $R$ be a fiber product $R_1\times_k R_2$ of local rings $(R_1,\n_1,k)$ and $(R_2,\n_2,k)$ with positive depth over $k$.
Then $\T(R)$ is finite if and only if so are both $\T(R_1)$ and $\T(R_2)$.
\end{cor}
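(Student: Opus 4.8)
The plan is to deduce the corollary directly from the explicit description of $\T(R)$ furnished by Theorem \ref{th51}, whose hypotheses are exactly those assumed here. First I would observe that the assignment $(I,J)\mapsto i_1(I)\oplus i_2(J)$ is injective on $X_1\times X_2$: since $i_1$ and $i_2$ are $R$-module isomorphisms onto the complementary direct summands $\Ker p_2$ and $\Ker p_1$ of $\m$, an equality $i_1(I)\oplus i_2(J)=i_1(I')\oplus i_2(J')$ inside $\m$ forces $i_1(I)=i_1(I')$ and $i_2(J)=i_2(J')$, hence $I=I'$ and $J=J'$. Thus Theorem \ref{th51} exhibits a bijection between $\T(R)\setminus\{R\}$ and $X_1\times X_2$, so $\T(R)$ is finite if and only if both $X_1$ and $X_2$ are finite.

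Next I would translate the finiteness of $X_j$ into that of $\T(R_j)$, for $j\in\{1,2\}$. If $R_j$ is a discrete valuation ring, then $X_j=\{\fkn_j\}$ is a singleton, and moreover $\T(R_j)=\{R_j\}$ is finite, since over a discrete valuation ring every regular ideal is principal, hence free of rank one, so has trace ideal equal to the whole ring. If $R_j$ is not a discrete valuation ring, then $X_j=\T(R_j)\setminus\{R_j\}$ by Theorem \ref{th51}, and this set is finite precisely when $\T(R_j)$ is. In either case $X_j$ is finite if and only if $\T(R_j)$ is finite; combining this with the bijection of the previous paragraph yields the asserted equivalence.

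I do not expect a genuine obstacle, as the substantive content is already packaged in Theorem \ref{th51}; the only points that deserve a word of care are the injectivity of $(I,J)\mapsto i_1(I)\oplus i_2(J)$, which is what permits passing from the set-theoretic description of $\T(R)$ to a statement about cardinalities, together with the elementary observation that a discrete valuation ring admits only the trivial regular trace ideal.
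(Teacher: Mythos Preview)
Your proposal is correct and is precisely the argument the paper has in mind: the corollary is stated without proof immediately after Theorem \ref{th51}, and the intended deduction is exactly the one you give, namely that the description $\T(R)=\{i_1(I)\oplus i_2(J)\mid I\in X_1,\ J\in X_2\}\cup\{R\}$ yields a bijection $\T(R)\setminus\{R\}\cong X_1\times X_2$, and that each $X_j$ is finite iff $\T(R_j)$ is. Your extra care in checking injectivity of $(I,J)\mapsto i_1(I)\oplus i_2(J)$ and in handling the DVR case is appropriate and correct.
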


\begin{ex}
	Let $R$ be a fiber product 
	\[
	k[[t^{5}, t^{8}, t^{12}, t^{14}]]\times_k k[[t^{9}, t^{12}, t^{16}, t^{19}, t^{20}, t^{22}, t^{23}, t^{26}]].
	\]
	Then, 
it is clear that $R$ is not a domain. On the other hand, since both $\T(k[[t^{5}, t^{8}, t^{12}, t^{14}]])$ and $\T(k[[t^{9}, t^{12}, t^{16}, t^{19}, t^{20}, t^{22}, t^{23}, t^{26}]])$ are finite by Example \ref{e43}, $\T(R)$ is also finite.
\end{ex}

%\begin{ex}
%Let $R$ be a fiber product $k[[t^4,t^5,t^6,t^7]]\times_k k[[t^4,t^5,t^6,t^7]]$.
%Then, since $\T(k[[t^4,t^5,t^6,t^7]])$ is finite, $\T(R)$ is also finite.
%\end{ex}

\section{Some special reflexive modules}\label{section5}

Throughout this section, we employ Setup \ref{setup}.
Denote by $\mathrm{Ref}_1(R)$ the set of isomorphism classes of reflexive modules of rank one over $R$.
We say a fractional ideal $I$ is \textit{reflexive} if $R:(R:I)=I$.
Note that an ideal $I$ is reflexive exactly when its isomorphism class belongs to $\mathrm{Ref}_1(R)$.

As a first part of this section, we prove that $\mathrm{Ref}_1(R)$ is finite when the equality $\T(R)=\I(R)$ holds.

\begin{lem}\label{3.1}
Let $M$ be a reflexive $R$-module of rank one.
Then there exists a reflexive ideal $I$ of $R$ such that $I$ is isomorphic to $M$ and contains $\c$.
\end{lem}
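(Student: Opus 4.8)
The plan is to represent $M$ as a fractional ideal, transport reflexivity into the colon language, and then rescale so that the resulting ideal is trapped between $\c$ and $R$. Since $M$ is reflexive it is torsion-free, and being of rank one it embeds as a regular fractional ideal $I_0$ of $R$, namely the image of $M$ in $M\otimes_R Q(R)\cong Q(R)$. Under the identification $\Hom_R(-,R)=R:(-)$ recorded in the Convention, the biduality map $M\to\Hom_R(\Hom_R(M,R),R)$ becomes the inclusion $I_0\hookrightarrow R:(R:I_0)$, so reflexivity of $M$ says precisely that $R:(R:I_0)=I_0$. Because multiplying by a nonzero element of $Q(R)$ preserves the isomorphism class and commutes with all colons, it suffices to find a nonzero $q\in Q(R)$ for which $I:=qI_0$ satisfies $\c\subseteq I\subseteq R$; such an $I$ is then automatically reflexive and isomorphic to $M$.

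To produce $q$, put $J_0:=R:I_0$, so that $I_0=R:J_0$. As $J_0$ is a finitely generated $R$-submodule of $Q(R)$ and $v$ is non-negative on $R$, the value $\min\{v(x)\mid 0\ne x\in J_0\}$ is attained at one of finitely many generators of $J_0$; take $q$ to be such a generator. Replacing $J_0$ by $J:=q^{-1}J_0$, and correspondingly $I_0$ by $I:=qI_0=R:J$, we get $1\in J$, hence $R\subseteq J$, and $v(y)\ge 0$ for every $y\in J$, hence $J\subseteq\ol{R}$. From $R\subseteq J$ it follows that $I=R:J\subseteq R:R=R$, and from $J\subseteq\ol{R}$ together with $\c\ol{R}=\c$ (which holds since $\c=R:\ol{R}$ is stable under multiplication by $\ol{R}$) it follows that $\c J\subseteq\c\ol{R}=\c\subseteq R$, that is, $\c\subseteq R:J=I$. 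Hence $\c\subseteq I\subseteq R$, and $I$ is a regular (since $\c$ is regular) reflexive ideal isomorphic to $M$, as desired.

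The only step that is not pure formalism is arranging $I\subseteq R$ and $\c\subseteq I$ at the same time; the device of rescaling $J_0$ by the inverse of a value-minimal element, which forces $R\subseteq J\subseteq\ol{R}$, is precisely what makes both inclusions drop out simultaneously. Everything else is routine manipulation of colon ideals, relying only on the fact that $\ol{R}$ is a discrete valuation ring (so that a value-minimal element exists) and on the identification $\Hom_R(-,R)=R:(-)$.
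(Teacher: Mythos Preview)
Your proof is correct, and it takes a genuinely different route from the paper's. The paper represents $M$ by an ideal $J\subseteq R$, picks $q\in J$ of minimal value so that $(q)$ is a minimal reduction of $J$, and then invokes an external result (\cite[Theorem~3.5]{dms}) to conclude that $J$ is isomorphic to an ideal containing $\c$. Your argument is self-contained: instead of rescaling $I_0$ directly, you pass to the dual $J_0=R:I_0$, rescale by a value-minimal element of $J_0$ to force $R\subseteq J\subseteq\ol{R}$, and then dualize back to obtain $\c\subseteq I\subseteq R$. The paper's proof is shorter on the page because it outsources the work to \cite{dms}, while your approach exposes the full mechanism using only the DVR structure of $\ol{R}$ and elementary colon manipulations.
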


\begin{proof}
First note that $M$ is isomorphic to some nonzero ideal $J$ of $R$. Set $a_i=\min\{v(x)\mid~x\in J\}$ and take an element $q\in J$ such that $v(q)=a_i$.
Then both of the integral closures of $J$ and $(q)$ are equal to $I_i$.
Hence $(q)$ is a minimal reduction of $J$, that is, $J^{\ell+1}=qJ^\ell$ for some $\ell>0$. 
By \cite[Theorem 3.5]{dms}, $J$ is isomorphic to an ideal $I$ containing $\fkc$.
As $M\cong I$, it is clear that $I$ is reflexive.
\end{proof}

\begin{thm}\label{cor3.4}
Assume $\mathrm{T}(R)=\I(R)$.
Then there is an inclusion map from $\mathrm{Ref}_1(R)$ to $\I(R)\cup\{J_0^{(i)}\}_{i\in\{1,\dots,n-2\}}$.
In particular, $\mathrm{Ref}_1(R)$ is a finite set.
\end{thm}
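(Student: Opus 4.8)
The plan is to show that every rank-one reflexive ideal is isomorphic to a member of the finite set $\I(R)\cup\{J_0^{(i)}\}_{1\le i\le n-2}$ and that the resulting assignment on isomorphism classes is injective. By Lemma \ref{3.1} it suffices to treat a reflexive ideal $I$ of $R$ with $\fkc\subseteq I$; let $I_i$ be its integral closure, so that $a_i=\min\{v(x)\mid x\in I\}$ and $\fkc\subseteq I\subseteq I_i$. If $I=I_i$ we are done, and when $i\in\{0,n-1,n\}$ one has $I=I_i$ automatically (using $\ell_R(I_{n-1}/\fkc)=1$ and that an element of value $0$ is a unit). So assume $I\subsetneq I_i$; the claim is then that $I_{i+2}\subseteq I$, so that $1\le i\le n-2$ and, since $\ell_R(I_i/I_{i+2})=2$ by Remark \ref{remrem2.2}, $\ell_R(I_i/I)=1$, i.e.\ $I=(f)+I_{i+2}$ with $v(f)=a_i$. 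The key tool is the identity valid for any reflexive fractional ideal, coming from the colon calculus:
\[
I:I=\bigl(R:(R:I)\bigr):I=R:\bigl(I\,(R:I)\bigr)=R:\tr_R(I).
\]
Since $\tr_R(I)\in\T(R)=\I(R)$ is $I_j$ for some $j\le i$ (as $I\subseteq I\,(R:I)=\tr_R(I)$), this gives $I:I=R:I_j=I_j:I_j$ by Lemma \ref{prelim1}; thus $I$ is a module over the local overring $S:=I_j:I_j$, whose trace ideals are explicitly listed in Theorem \ref{tt}. Combining this $S$-module structure with the relations $I_lI_{l+2}=q_lI_{l+2}$ (available since $\T(R)=\I(R)$, Theorem \ref{th1}) in a descending induction patterned on the proof of Claim \ref{claim2insec2} in Theorem \ref{th1}, and starting from $I_n=\fkc\subseteq I$, one should obtain $I_{i+2}\subseteq I$. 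I expect this descent to be the main obstacle: unlike the trace-ideal case one has only $I:I=R:I_j$ rather than $R:I=I:I$, so the inductive step must be routed through $I_j$ (equivalently through $S$), and the hypothesis $\fkc\subseteq I$ is genuinely needed (it already rules out the principal reflexive ideals, which are then $I_0$).

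Next comes the normalization of the case $I=(f)+I_{i+2}$, $v(f)=a_i$, $1\le i\le n-2$. Writing $f$ modulo $I_{i+1}$ and then modulo $I_{i+2}$, and using $\fkm q_{i+1}\subseteq I_{i+2}$, one finds $I=J_\alpha^{(i)}=(q_i+\alpha q_{i+1})+I_{i+2}$ for some $\alpha\in R$. If $\alpha\in\fkm$ then $\alpha q_{i+1}\in I_{i+2}$, so $q_i\in J_\alpha^{(i)}$ and $J_\alpha^{(i)}=(q_i)+I_{i+2}=J_0^{(i)}$. If $\alpha\in R^\times$, set $u:=q_i^{-1}(q_i+\alpha q_{i+1})=1+\alpha q_i^{-1}q_{i+1}$; since $\T(R)=\I(R)$ gives $q_i^{-1}q_{i+1}I_{i+2}\subseteq q_i^{-1}I_iI_{i+2}=I_{i+2}$, we have $u\in I_{i+2}:I_{i+2}$, and because $v(u)=0$ and $I_{i+2}:I_{i+2}$ is a local overring of $R$ inside the discrete valuation ring $\ol R$, $u$ is a unit there; hence $uI_{i+2}=I_{i+2}$ and $uJ_0^{(i)}=(uq_i)+uI_{i+2}=J_\alpha^{(i)}$, so $J_\alpha^{(i)}\cong J_0^{(i)}$. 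In all cases $I$ is isomorphic to a member of $\I(R)\cup\{J_0^{(i)}\}_{1\le i\le n-2}$.

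Finally, the induced map $\mathrm{Ref}_1(R)\to\I(R)\cup\{J_0^{(i)}\}_{1\le i\le n-2}$ — a class going to the integrally closed ideal it contains if there is one, and otherwise to the relevant $J_0^{(i)}$ — is well defined and injective. Distinct $I_l$ are non-isomorphic because they are distinct trace ideals (Fact \ref{fact2.3zz}(2) together with the non-isomorphism of distinct trace ideals). If $J_0^{(i)}$ is not integrally closed it is not a trace ideal (as $\T(R)=\I(R)$), and it is not isomorphic to any $I_l$: an isomorphism $J_0^{(i)}=xI_l$ would force, on taking integral closures, $I_i=\overline{J_0^{(i)}}=x\overline{I_l}=xI_l=J_0^{(i)}$, which is absurd; likewise $J_0^{(i)}=xJ_0^{(i')}$ would force $I_i=xI_{i'}$, contradicting that $I_i,I_{i'}$ are distinct trace ideals for $i\ne i'$. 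Therefore $\mathrm{Ref}_1(R)$ embeds into a set of at most $2n-1$ elements and is in particular finite.
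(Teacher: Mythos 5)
Your overall skeleton --- reduce via Lemma \ref{3.1} to $\fkc\subseteq I$, show $I_{i+2}\subseteq I$, normalize $I$ to $J_\alpha^{(i)}$, and carry $J_\alpha^{(i)}$ to $J_0^{(i)}$ by multiplication with a unit of $I_{i+2}:I_{i+2}$ --- is the paper's, and your normalization step, your isomorphism $J_\alpha^{(i)}\cong J_0^{(i)}$, and the injectivity discussion at the end are all fine. But the central step, the containment $I_{i+2}\subseteq I$ for $1\le i\le n-3$, is exactly what you leave open, and the route you sketch does not close it. The identity $I:I=R:\tr_R(I)=I_j:I_j$ is correct, but a descent ``patterned on Claim \ref{claim2insec2}'' cannot be transplanted: that argument produces inclusions $q_i^{-1}I_l\subseteq R:I$ and then converts them into elements of $I$ via the equality $R:I=I:I$, which is precisely the trace-ideal property of $I$ and is unavailable here. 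For a merely reflexive $I$ one has only $I:I=R:I_j\subseteq R:I$, and knowing $q_i^{-1}I_l\subseteq R:I$ bounds $I=R:(R:I)$ from \emph{above}, not below; likewise the $S$-module structure gives $(I:I)I=I$ but produces no new elements of $I$.

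What is needed is an upper bound on $R:I$, which reflexivity then converts into a lower bound on $I$. This is how the paper proceeds: since $\fkc\subseteq I$ one has $R:I\subseteq R:\fkc=\ol R$, and for $q\in I$ with $v(q)=a_i$ one gets $q(R:I)\subseteq R\cap q\ol R\subseteq I_i$, i.e. $R:I\subseteq q^{-1}I_i$; then $(R:I)I_{i+2}\subseteq q^{-1}I_iI_{i+2}=I_{i+2}\subseteq R$ by Theorem \ref{th1} (this is where the hypothesis $\T(R)=\I(R)$ enters), so $I_{i+2}\subseteq R:(R:I)=I$. Substituting this short double-dual computation for your proposed descent makes the rest of your argument go through.
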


\begin{proof}
Let $I$ be a reflexive ideal of $R$ containing $\fkc$.
Take an integer $i$ and an element $q\in I$ such that $a_i=v(q)=\min\{v(f)\mid f\in I\}$.
The inequality $i\le n$ is obvious.
%\old{We see the assertion by proving the two claims bellow.}
\begin{claim} \label{c621}

We have either one of the following:
\begin{enumerate}[\rm(1)] 
	\item $I=I_i$ or 
	\item $i\le n-2$ and $I=J^{(i)}_\alpha$ for some $\alpha\in R$, where $J_\alpha^{(i)}$ denotes the ideal defined after Lemma \ref{lem2.5}.
\end{enumerate}
%\old{we have either $I=I_i$ or $i\le n-2$ and $I=J^{(i)}_\alpha$ for some $\alpha\in R$, where $J_\alpha^{(i)}$ denotes the ideal defined after Lemma \ref{lem2.5}.}
\end{claim}

\begin{claim} \label{c622}
If $i\le n-2$ and $I=J_\alpha^{(i)}$ for some $\alpha\in R$, then $I$ is isomorphic to $J_0^{(i)}$.
\end{claim}

\begin{proof}[Proof of Claim \ref{c621}]
The case where $n-2 \le i\le n$ is clear since $I$ contains $\fkc$.
So we may assume $i\le n-3$.
Since $I$ contains $\c$, $R:I\subseteq R:\c=\ol{R}$.
Then observe that $q(R:I)\subseteq q\ol{R}\cap R\subseteq\{x\in R\mid v(x)\ge v(q)\}=I_i$.
Therefore $I=R:(R:I)\supseteq R:q^{-1}I_i$.
Using Theorem \ref{t24} and the assumption $\mathrm{T}(R)=\I(R)$, we see that $q^{-1}I_iI_{i+2}\subseteq I_{i+2}\subseteq R$.
It follows that $R:q^{-1}I_i\supseteq I_{i+2}$.
We then have an inclusion $I\supseteq I_{i+2}$, which yields that either $I=I_i$ or $J_{\alpha}^{(i)}$ for some $\alpha \in R$.
\end{proof}
\begin{proof}[Proof of Claim \ref{c622}]
We set $x:=1+\alpha q^{-1}q'$, where $q'$ is an element taken as in the definition of $J_{\alpha}^{(i)}$.
Then $qx=q+\alpha q'$.
In view of Theorem \ref{t24}, the assumption $\mathrm{T}(R)=\I(R)$ implies $xI_{i+2}=I_{i+2}$. Indeed, $xI_{i+2}\subseteq I_{i+2}$ follows from $xI_{i+2}\subseteq I_{i+2}+ \alpha q^{-1}q'I_{i+2}$ and $\alpha q^{-1}q'I_{i+2}\subseteq \alpha q^{-1}I_iI_{i+2} \subseteq I_{i+2}$. On the other hand, the inclusion $xI_{i+2}\supseteq I_{i+2}$ follows from the observation that $xI_{i+2}$ contains $\fkc$ and all elements of order $a_j$ for $i \le j \le n$ since $v(x)=0$.

Thus we get $xJ_0^{(i)}= (xq)+xI_{i+2}=(q+\alpha q') + I_{i+2}= J_\alpha^{(i)}$.
This means that $J_0^{(i)}$ is isomorphic to $J_\alpha^{(i)}$ via the multiplication by $x$.
\end{proof}
By Claims \ref{c621} and \ref{c622}, reflexive ideals containing $\fkc$ are only either $I_j$ for $0\le j \le n$ or $J_0^{(i)}$ for $0 \le i \le n-2$ up to isomorphism. By combining this result with Lemma \ref{3.1}, a system of representatives of $\Ref_1(R)$ is a subset of $\I(R)\cup\{J_0^{(i)}\}_{i\in\{1,\dots,n-2\}}$.
\end{proof}

%%%%%%%%%%%%%%%%%%%%%%%%%%%%%%%%%%%%%%%%%%%%%%%%%%%%%%%%%%%%%%%%%%%%%%%%%%%%
Next we explore reflexive Ulrich modules over rings $R$ satisfying an equality $\m I_3=q I_3$ for some $q\in \m$.
Note that rings $R$ satisfying $\T(R)=\I(R)$ have the equality $\m I_3=q I_3$ (Theorem \ref{th1}).
Let us recall the notion of Ulrich modules.

\begin{dfn}(\cite[Definition 3.1]{GOTWY})
We say that a finitely generated $R$-module $M$ is an {\it Ulrich module} if $M$ is maximal Cohen--Macaulay (equivalently, torsion--free since $\dim R=\depth R=1$), and $\rme(M)=\mu_R(M)$, where $\rme(M)$ denotes the multiplicity of $M$ and $\mu_R(M)$ denotes the number of minimal generators of $M$. It is known that $M$ is Ulrich module if and only if $\m M=q M$, where $(q)$ is a minimal reduction of $\m$ (see \cite{GOTWY}).
\end{dfn}

In what follows, throughout this section, let $(q)$ be a minimal reduction of $\m$.

\begin{lem}{\rm (\cite{b})}\label{prelim3.13}
Let $M$ be a finitely generated reflexive $R$-module such that $M$ has no free summands. Then, $M$ can be regarded as an $\m:\m$-module. That is, by regarding $M$ as a submodule of $Q(R)\otimes_R M\cong Q(R)^{\rank_R(M)}$, we have $(\m:\m)M=M$.
\end{lem}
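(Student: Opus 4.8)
The plan is to translate reflexivity of $M$ into a concrete ``double-dual lattice'' equality inside $Q(R)^r$, and then use the absence of free summands to push the relevant products into $\m$. Since $R$ is a domain and $M$ is reflexive, $M$ is torsion-free, so I first fix a full-rank embedding $M\subseteq Q(R)^r$ with $r=\rank_R(M)$ (that is, $Q(R)\otimes_R M=Q(R)^r$). Writing $\langle v,w\rangle=\sum_{i=1}^r v_iw_i$ for the standard pairing on $Q(R)^r$, I would identify $M^{*}=\Hom_R(M,R)$ with the $R$-submodule $\{v\in Q(R)^r\mid \langle v,m\rangle\in R \text{ for all } m\in M\}$ of $Q(R)^r$, using that every $R$-linear map $M\to R$ extends uniquely to a $Q(R)$-linear functional on $Q(R)^r$. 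Applying this once more (and noting that $M^{*}$ is again full-rank in $Q(R)^r$, since $\Hom$ out of a finitely generated module commutes with localization), $M^{**}=\Hom_R(M^{*},R)$ is identified with $\{w\in Q(R)^r\mid \langle w,v\rangle\in R \text{ for all } v\in M^{*}\}$, and under these identifications the canonical biduality map $M\to M^{**}$ is simply the inclusion of submodules of $Q(R)^r$. Hence ``$M$ reflexive'' means precisely $M=\{w\in Q(R)^r\mid \langle w,v\rangle\in R \text{ for all } v\in M^{*}\}$.

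Next I would record what ``$M$ has no free summand'' gives: over the local ring $R$ this forces $\Im f\subseteq\m$ for every $f\in\Hom_R(M,R)$, because if some $\Im f$ were not contained in $\m$ it would equal $R$, making $f$ a split surjection onto $R$ and producing a free summand of $M$. Equivalently $\tr_R(M)\subseteq\m$, and in the lattice description this reads $\langle v,m\rangle\in\m$ for all $v\in M^{*}$ and all $m\in M$.

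With these two facts the conclusion follows by a one-line computation. Since $\m$ contains a non-zerodivisor, $S:=\m:\m$ is a ring contained in $Q(R)$, acting on $Q(R)^r$ by scalar multiplication; as $R\subseteq S$ we have $M\subseteq SM$. Conversely, let $s\in S$ and $m\in M$. For every $v\in M^{*}$ we have $\langle sm,v\rangle=s\langle m,v\rangle\in s\m\subseteq\m\subseteq R$, so $sm$ satisfies the defining condition of $M^{**}=M$; hence $sm\in M$. Thus $SM\subseteq M$, and combining the two inclusions gives $(\m:\m)M=M$, which is exactly the assertion that $M$ can be regarded as a module over $\m:\m$.

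The step requiring the most care — more a matter of careful bookkeeping than a real obstacle — is the first one: making the identifications of $M^{*}$ and $M^{**}$ with sublattices of $Q(R)^r$ precise, and checking that the canonical map $M\to M^{**}$ becomes literally the inclusion, so that reflexivity is rendered as the lattice equality used above. Once that is set up, both containments in $(\m:\m)M=M$ are immediate.
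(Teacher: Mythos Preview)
Your argument is correct. The paper does not supply its own proof of this lemma---it simply cites Bass \cite{b}---so there is nothing to compare against directly; your lattice computation is in fact the standard argument: the absence of a free summand gives $\langle M^{*},M\rangle\subseteq\m$, and reflexivity rewritten as $M=\{w\in Q(R)^{r}\mid \langle w,M^{*}\rangle\subseteq R\}$ then forces $(\m:\m)M\subseteq M$. An equivalent, Hom-theoretic phrasing (closer to what one finds in the literature) observes that $\Hom_R(M,R)=\Hom_R(M,\m)$ since $\tr_R(M)\subseteq\m$, and by symmetry of the pairing the same holds for $M^{*}$, so $M=M^{**}=\Hom_R(M^{*},\m)$ acquires a natural $\End_R(\m)=(\m:\m)$-action by post-composition; this is the same computation in different clothing.
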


\begin{lem}\label{lem2.13}
Let $M$ be an Ulrich $R$-module. Then $\Hom_R(M, R)$ is a reflexive Ulrich $R$-module.
\end{lem}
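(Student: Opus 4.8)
The plan is to verify separately that $N:=\Hom_R(M,R)$ is maximal Cohen--Macaulay, that it is reflexive, and that $\m N=qN$; the last condition, since $(q)$ is a minimal reduction of $\m$, is precisely what makes $N$ Ulrich by the characterization recalled above.

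That $N$ is maximal Cohen--Macaulay is routine: dualizing a surjection $R^{\oplus m}\twoheadrightarrow M$ embeds $N$ into $R^{\oplus m}$, so $N$ is torsion-free, and it is nonzero because $N\otimes_R Q(R)\cong\Hom_{Q(R)}(M\otimes_R Q(R),Q(R))$ has the same positive dimension over $Q(R)$ as $M\otimes_R Q(R)$; a nonzero torsion-free module over the one--dimensional local domain $R$ has depth $1$. For reflexivity I intend to invoke the standard fact that the $R$-dual of any finitely generated module over a domain is reflexive: the evaluation map of $N$ is injective (as $N$ is torsion-free), it is a split monomorphism, a retraction being the $R$-dual of the evaluation map $M\to\Hom_R(\Hom_R(M,R),R)$, and the complementary summand of its image is the $R$-dual of $\mathrm{coker}\big(M\to\Hom_R(\Hom_R(M,R),R)\big)$, which is a torsion module and hence has zero $R$-dual; so the evaluation map of $N$ is an isomorphism.

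The real content is the equality $\m N=qN$. Since $q\in\m$ we have $qN\subseteq\m N$, so it is enough to show $\m(N/qN)=0$. Because $M$ is Ulrich, $\m M=qM$, hence $M/qM$ is annihilated by $\m$ and therefore $M/qM\cong k^{\,\mu_R(M)}$. Apply $\Hom_R(-,R)$ to $0\to qM\to M\to M/qM\to 0$: since $M/qM$ is torsion, $\Hom_R(M/qM,R)=0$, so the restriction map $\Hom_R(M,R)\to\Hom_R(qM,R)$ is injective with cokernel embedding into $\Ext^1_R(M/qM,R)$. Multiplication by $q$ is an isomorphism $M\xrightarrow{\ \sim\ }qM$ (here $M$ is torsion-free and $q\ne 0$), and precomposing with it identifies $\Hom_R(qM,R)$ with $N$ in such a way that the restriction map above becomes multiplication by $q$ on $N$. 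Consequently $N/qN$ embeds into $\Ext^1_R(M/qM,R)\cong\Ext^1_R(k,R)^{\,\mu_R(M)}$, which is annihilated by $\m$; therefore $\m(N/qN)=0$, i.e.\ $\m N=qN$, and $N$ is Ulrich.

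The one point requiring a little care is the last piece of bookkeeping — checking that, after the identification $\Hom_R(qM,R)\cong N$, the restriction map really becomes multiplication by $q$ on $N$ — but this is a short diagram chase. Everything else is either standard homological algebra or the facts about Ulrich modules already recorded, so I anticipate no genuine obstacle.
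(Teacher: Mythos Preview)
Your proof is correct and follows essentially the same route as the paper: both dualize the short exact sequence $0\to M\xrightarrow{q}M\to M/qM\to 0$ to embed $N/qN$ into $\Ext^1_R(M/qM,R)$, which is annihilated by $\m$ since $M/qM$ is a $k$-vector space, yielding $\m N=qN$. The only difference is presentational: you spell out the maximal Cohen--Macaulay and reflexivity verifications explicitly (the latter via the standard retraction argument for duals), whereas the paper dispatches reflexivity by citing a reference.
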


\begin{proof}
By applying the $R$-dual to $0 \to M \xrightarrow{q} M \to M/q M \to 0$, we obtain an exact sequence
\[
0 \to \Hom_R(M, R) \xrightarrow{q} \Hom_R(M, R) \to \Ext_R^1(M/qM, R).
\]
Note that $\Ext_R^1(M/qM, R)$ is a free $R/\m$-module since $\m M=q M$.
Hence, the above exact sequence proves that $\Hom_R(M, R)/q \Hom_R(M, R)$ is a free $R/(q)$-module.
It follows that $\m \Hom_R(M, R) \subseteq q \Hom_R(M, R)$. Hence, $\Hom_R(M, R)$ is a Ulrich $R$-module.
The reflexivity of $\Hom_R(M, R)$ follows from a well-known fact, see \cite[Lemma 4.1]{HKS} for example.
\end{proof}

\begin{lem}\label{lem2.14}
Set $S=\m:\m$.
If $M$ is a reflexive Ulrich $R$-module, then $M$ is a reflexive $S$-module.
\end{lem}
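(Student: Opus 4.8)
The plan is to reduce reflexivity over $S$ to reflexivity over $R$ by identifying the $S$-dual of $M$ with a $q$-twist of its $R$-dual; the twist is exactly what makes the Ulrich hypothesis indispensable.

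First I would dispose of the degenerate case: if $R$ is a discrete valuation ring then $S=\m:\m=R$ and there is nothing to prove, so assume $R\neq\ol R$. Since $M$ is Ulrich we have $\m M=qM$, whence $(\m:\m)M\subseteq(q^{-1}\m)M=q^{-1}(qM)=M$ (using $\m:\m\subseteq q^{-1}\m$ from Lemma~\ref{lem2.9}), so $M$ is an $S$-module; alternatively one notes that $M$ has no free $R$-summand (a summand $R$ would give $\m=qR$, forcing $R$ to be a DVR) and invokes Lemma~\ref{prelim3.13}. I would also record the elementary computation that, because $R\neq\ol R$, the conductor of $S$ into $R$ is $R:_{Q(R)}S=\m$: the inclusion $\m\subseteq R:_QS$ follows from $\m S\subseteq\m\subseteq R$, while any $x\in R:_QS$ lies in $R$ and cannot be a unit, else $S=x^{-1}(xS)\subseteq R$.

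The technical core is the identity
\[
\Hom_S(M,S)=q^{-1}\Hom_R(M,R)
\]
inside $\Hom_R(M,Q(R))$, valid for every Ulrich $R$-module $M$ regarded as an $S$-module. I would prove it as follows. For any $S$-module $N$, an $R$-linear map $N\to R$ extends $Q(R)$-linearly and is therefore automatically $S$-linear, and its image lies in $R:_QS=\m$; hence $\Hom_R(N,R)=\Hom_R(N,\m)$ and $\Hom_S(N,S)=\Hom_R(N,S)$. Now for $\psi\in\Hom_R(M,Q(R))$ one has $\psi(M)\subseteq S$ if and only if $\psi(M)\subseteq q^{-1}\m$: the forward implication is clear since $S\subseteq q^{-1}\m$, and conversely $\psi(M)\subseteq q^{-1}\m$ gives $\m\,\psi(M)=\psi(\m M)=\psi(qM)=q\,\psi(M)\subseteq\m$ — this is the one and only place the Ulrich equality $\m M=qM$ enters — so $\psi(M)\subseteq\m:_Q\m=S$. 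Since $\{\psi:\psi(M)\subseteq q^{-1}\m\}=q^{-1}\Hom_R(M,\m)=q^{-1}\Hom_R(M,R)$, the identity follows.

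Granting the identity, the conclusion is immediate. By Lemma~\ref{lem2.13}, $M^{*}:=\Hom_R(M,R)$ is again a reflexive Ulrich $R$-module, hence also an $S$-module, and $\Hom_S(M,S)=q^{-1}M^{*}\cong M^{*}$ is reflexive Ulrich over $R$ as well. Applying the identity to $M^{*}$ and using that $M$ is $R$-reflexive gives $\Hom_S(M^{*},S)=q^{-1}\Hom_R(M^{*},R)=q^{-1}\Hom_R(\Hom_R(M,R),R)=q^{-1}M$. Therefore
\[
\Hom_S\bigl(\Hom_S(M,S),S\bigr)=\Hom_S(q^{-1}M^{*},S)=q\cdot\Hom_S(M^{*},S)=q\cdot q^{-1}M=M,
\]
and tracking the identifications (all taking place inside $Q(R)\otimes_R M$) shows the canonical map $M\to\Hom_S(\Hom_S(M,S),S)$ is this isomorphism, so $M$ is reflexive over $S$. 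The main obstacle is precisely the displayed identity $\Hom_S(M,S)=q^{-1}\Hom_R(M,R)$; once it is in place the rest is formal bookkeeping, and it is there that both the Ulrich hypothesis and — through $R:_QS=\m$ — the one-dimensional analytically irreducible setting are genuinely used, so I would isolate it as a standalone step.
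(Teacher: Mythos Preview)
Your proof is correct and reaches the same destination as the paper---namely the isomorphism $\Hom_S(M,S)\cong\Hom_R(M,R)$, applied twice via Lemma~\ref{lem2.13}---but you derive that isomorphism by a different, more concrete route. The paper uses tensor--hom adjunction: it writes $\Hom_S(M,S)=\Hom_S(M,\Hom_R(\m,R))\cong\Hom_R(\m\otimes_S M,R)$, observes that the kernel of $\m\otimes_S M\twoheadrightarrow \m M$ has finite length and hence is invisible to the $R$-dual, and concludes $\Hom_R(\m\otimes_S M,R)\cong\Hom_R(\m M,R)=\Hom_R(qM,R)\cong\Hom_R(M,R)$. You instead work entirely inside $\Hom_R(M,Q(R))$ and prove the sharper \emph{equality} $\Hom_S(M,S)=q^{-1}\Hom_R(M,R)$, using $R:_{Q(R)}S=\m$, $S\subseteq q^{-1}\m$, and the Ulrich identity $\m M=qM$. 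Your approach avoids the tensor product and the finite-length-kernel step in favor of a short fractional-ideal computation; making the $q$-shift explicit also lets you see directly that the $S$-biduality map is the identity, whereas the paper closes by citing the fact that modules arising as duals are reflexive. One small remark: your assertion that ``for any $S$-module $N$, an $R$-linear map $N\to R$ extends $Q(R)$-linearly'' tacitly requires $N$ to be torsion-free---harmless here, since you only apply it to $M$ and $M^*$, but worth stating.
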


\begin{proof}
By Lemma \ref{prelim3.13}, $M$ can be regarded as an $S$-module.
Let $X$ be the kernel of the canonical surjective $S$-homomorphism $\m \otimes_S M \to \m M;\  a\otimes x \mapsto ax$ for $a\in \m$ and $x\in M$.
Note that $X$ is of finite length as an $R$-module since there are equalities \[
\rank_R (X)=\rank_R (\m \otimes_S M)- \rank_R (\m M)=0.
\]
Hence, by applying the $R$-dual to $0 \to X \to \m \otimes_S M \to \m M \to 0$, we obtain an isomorphism 
$
\Hom_R(M\otimes_S \m, R) \cong \Hom_R(\m M, R)$.
Therefore, we obtain that 
\begin{align*}
\Hom_S(M, S) =& \Hom_S(M, \Hom_R(\m, R)) \cong \Hom_R(M\otimes_S \m, R) \cong \Hom_R(\m M, R)\\
=& \Hom_R(q M, R) \cong \Hom_R(M, R).
\end{align*}
By noting that $\Hom_R(M, R)$ is again a reflexive Ulrich $R$-module by Lemma \ref{lem2.13}, we obtain that 
\[
\Hom_S(\Hom_S(M, S), S) \cong \Hom_S(\Hom_R(M, R), S) \cong \Hom_R(\Hom_R(M, R), R) \cong M.
\]
Hence, $M$ is reflexive as an $S$-module (\cite[Lemma 4.1]{HKS}).
\end{proof}

We now characterize reflexive Ulrich $R$-modules in terms of the endomorphism algebra $\m:\m$ of $\m$.

\begin{thm} \label{th59}
Suppose that an equality
$\m I_3=q I_3$ holds.
%$\mathrm{T}(R)=\mathcal{I}(R)$.
Set $S=\m:\m$. Let $M$ be a finitely generated $R$-module such that $R$ and $S$ are not in the direct summand of $M$.
Then, the following are equivalent.
\begin{enumerate}[\rm(1)] 
%\item $M$ is a reflexive $R$-module such that $q' M\subseteq t^{a_1} M$.
\item $M$ is a reflexive Ulrich $R$-module.
\item $M$ is a reflexive $S$-module.
\end{enumerate}
\end{thm}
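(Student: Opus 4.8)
The plan is to prove the two implications separately, using the chain of lemmas just established. For the implication $(1)\Rightarrow(2)$ there is essentially nothing new to do: if $M$ is a reflexive Ulrich $R$-module, then Lemma~\ref{lem2.14} gives directly that $M$ is a reflexive $S$-module, and the hypothesis $\m I_3 = qI_3$ is not even needed here. The content of the theorem is therefore the converse $(2)\Rightarrow(1)$, and this is where the assumption $\m I_3 = qI_3$ must be used.

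For $(2)\Rightarrow(1)$, suppose $M$ is a reflexive $S$-module, where $S=\m:\m$. First I would record that such an $M$ is in particular a finitely generated torsion-free $S$-module, hence torsion-free (equivalently maximal Cohen--Macaulay) over $R$; so the issue is purely to verify the Ulrich condition $\m M = qM$. The idea is to exploit that $S$, being the endomorphism ring of $\m$, is itself governed by the integrally closed ideals $I_j$ and their products. Since $R$ and $S$ are not direct summands of $M$, I would first reduce to understanding $M$ as a module over $S$ and relate $\m M$ to $\mathfrak{n}_S M$ or to $(qS)M$, where $\mathfrak{n}_S$ is the Jacobson radical of the (semilocal, in fact local by the remark that all intermediate rings between $R$ and $\ol R$ are local) ring $S$. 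The key computation is that $\m S = \m$ (since $\m\subseteq S$ and $S\m\subseteq\m$ by definition of $S=\m:\m$), and more importantly that $\m\cdot S$-module structure interacts with $q$ via the hypothesis: from $\m I_3 = qI_3$ and Lemma~\ref{prelim3} one extracts $\m L = qL$ for all regular trace ideals $L\subseteq\tr_R(I_3)$, and in particular $q^{-1}\m \subseteq I_3:I_3 \subseteq \m:\m = S$, so that $\m\subseteq qS$. Combined with $q\in\m$, this gives $\m S = qS$ as ideals of $S$. Then for any $S$-module $M$ one gets $\m M = (\m S)M = qSM = qM$, which is exactly the Ulrich condition.

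So the skeleton is: (a) reduce $(2)\Rightarrow(1)$ to checking $\m M = qM$; (b) prove the ring-level identity $\m S = qS$ using $\m I_3 = qI_3$, Lemma~\ref{prelim3}, and Lemma~\ref{lem2.9} to land $q^{-1}\m$ inside $S$; (c) conclude $\m M = qM$ formally, and note reflexivity and torsion-freeness over $R$ come for free from the $S$-reflexivity plus Lemma~\ref{prelim3.13}/the module-finiteness of $S$ over $R$. The reverse direction $(1)\Rightarrow(2)$ is then just a citation of Lemma~\ref{lem2.14}.

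The step I expect to be the main obstacle is (b), the passage from $\m I_3 = qI_3$ to $\m S = qS$. The subtlety is that $\m I_3 = qI_3$ only controls the product with the single ideal $I_3$, whereas $S = \m:\m$ is defined via $\m$ itself; bridging this requires knowing that $\m$ is a trace ideal contained in $\tr_R(I_3)$ (true: $\m = I_1$ is integrally closed, hence a trace ideal by Fact~\ref{fact2.3zz}(2), and $I_3\subseteq I_1$ so $\tr_R(I_3)\supseteq I_3\ni$ enough, but one actually wants $\m\subseteq\tr_R(I_3)$, which needs a short argument), and then invoking Lemma~\ref{prelim3} with $L=\m$. One must be a little careful about the case distinctions coming from the size of $n$: if $n\le 2$ the hypothesis and conclusion are degenerate (and $\m:\m$ may equal $\ol R$ or behave specially), so I would treat small $n$ directly, and for $n\ge 3$ run the argument above. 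Handling these edge cases cleanly, and making sure the containment $\m\subseteq\tr_R(I_3)$ genuinely holds so that Lemma~\ref{prelim3} applies, is the delicate point.
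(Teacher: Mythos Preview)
Your step (b) contains a genuine error: the containment $I_3:I_3 \subseteq \m:\m$ goes the wrong way. By Lemma~\ref{prelim2}, since $I_3\subseteq\m$ are both trace ideals, one has $\m:\m\subseteq I_3:I_3$, not the reverse. Consequently, from $\m I_3=qI_3$ you get $q^{-1}\m\subseteq I_3:I_3$, but this does \emph{not} give $q^{-1}\m\subseteq S$. In fact $\m\subseteq qS$ is equivalent to $q^{-1}\m\cdot\m\subseteq\m$, i.e.\ $\m^2=q\m$; so your identity $\m S=qS$ holds precisely when $R$ has minimal multiplicity, which is not part of the hypothesis (and fails in Example~\ref{e43}, for instance). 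Your formal deduction $\m M=(\m S)M=qM$ therefore only covers this special case.

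The paper's proof handles this by a case split. If $\m^2=q\m$, your argument is exactly right: $q^{-1}\m=S$ and $\m M=qM$ follows. If $\m^2\ne q\m$, the paper uses the so-far-unused hypothesis that $S$ is not a direct summand of $M$: by Lemma~\ref{prelim3.13} (applied over $S$ to the reflexive $S$-module $M$), $M$ becomes a module over $\fkn:\fkn$, where $\fkn$ is the maximal ideal of $S$. Lemma~\ref{lem2.10} (with $I=\m$, $J=I_3$) then identifies $\fkn=q^{-1}I_3$, so $\fkn:\fkn=I_3:I_3$, and now the inclusion $q^{-1}\m\subseteq I_3:I_3$ that you correctly derived yields $q^{-1}\m M\subseteq (I_3:I_3)M=M$, i.e.\ $\m M=qM$. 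So the missing idea is exactly this second application of Lemma~\ref{prelim3.13} to climb one ring higher. (A minor point: reflexivity of $M$ over $R$ is not automatic from module-finiteness of $S$; the paper invokes \cite[Theorem~1.3(1)]{ko}.)
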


\begin{proof}
%(1) $\Rightarrow$ (2): By Lemma \ref{prelim3.13}, $M$ can be regarded as an $S$-module. Hence, by Lemma \ref{lem2.10}, we have $q^{-1} I_3M \subseteq SM=M$.  By noting that $t^{a_2-a_1}M \subseteq M$ by the  assumption, we obtain that $t^{-a_1} I_1M \subseteq M$. Hence, $I_1 M=t^{a_1} M$, that is, $M$ is an Ulrich $R$-module.

(1) $\Rightarrow$ (2): This follows by Lemma \ref{lem2.14}.

(2)  $\Rightarrow$ (1): Suppose that $M$ is a reflexive $S$-module. Then $M$ is reflexive as an $R$-module by \cite[Theorem 1.3(1)]{ko}. Thus, we have only to show that $M$ is an Ulrich $R$-module.

Let $\fkn$ be the maximal ideal of $S$.
Since $S$ is not in the direct summand of $M$, $M$ can be regarded as an $\fkn:\fkn$-module by Lemma \ref{prelim3.13}. 
Suppose that $\m^2\ne q\m$. Then, by Lemma \ref{lem2.10}, $\fkn:\fkn=q^{-1}I_3:q^{-1}I_3=I_3:I_3$. Hence, by Theorem \ref{t24}, we have $q^{-1}I_1M \subseteq (I_3:I_3)M=M$. Hence, $M$ is an Ulrich $R$-module. 

Suppose that $\m^2=q\m$. Then, by Lemma \ref{lem2.9}, $q^{-1}\m=S$. Hence, $q^{-1}\m M=SM=M$, that is, $M$ is an Ulrich $R$-module. 
\end{proof}

As an application, we obtain the finiteness of reflexive Ulrich $R$-modules up to isomorphism when $n$ is small.
Before showing it, we put a lemma.

\begin{lem}\label{rem5.10}
Suppose that $R$ is not a discrete valuation ring. 
Let $S=\fkm:\fkm$ and $\fkc_S$ denote the conductor of $S$. Then, 
$\ell_S(S/\fkc_S) < \ell_R(R/\fkc)$. Furthermore, $\ell_S(S/\fkc_S) = \ell_R(R/\fkc)-1$ if and only if $R$ has minimal multiplicity.
\end{lem}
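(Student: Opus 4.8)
The plan is to derive the whole statement from a single length identity attached to the square of inclusions $\fkc\subseteq R\subseteq S$ and $\fkc\subseteq\fkc_S\subseteq S$, where $S:=\fkm:\fkm$. First I would record the standing facts. Since $R$ is not a discrete valuation ring, $R\subsetneq S\subseteq\ol{R}$; moreover $S$ is a local ring (a subring of the local ring $\ol{R}$, module-finite over $R$), and since the canonical map $k\to\ol{R}/\fkn$ is an isomorphism, the residue field of $S$ is again $k$. Hence an $S$-composition series of a finite length $S$-module is also an $R$-composition series, so $\ell_S(M)=\ell_R(M)$ for every $S$-module $M$ of finite length. Adding lengths along the two chains above and using $\ell_S(S/\fkc_S)=\ell_R(S/\fkc_S)$ gives
\[
\ell_R(R/\fkc)-\ell_S(S/\fkc_S)=\ell_R(\fkc_S/\fkc)-\ell_R(S/R).
\]
Thus the lemma reduces to proving $\ell_R(\fkc_S/\fkc)-\ell_R(S/R)\ge 1$, with equality precisely when $R$ has minimal multiplicity.

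The computational core is the identity $\fkc_S=q^{-1}\fkc$ for any $q\in\fkm$ with $v(q)=a_1\,(=\rme(R))$. I would prove it by two inclusions. For ``$\subseteq$'': from $\fkc_S\ol{R}\subseteq S$ and $S\fkm\subseteq\fkm$ one gets $(\fkc_S\fkm)\ol{R}\subseteq\fkm\subseteq R$, hence $\fkc_S\fkm\subseteq R:\ol{R}=\fkc$, and since $q\in\fkm$ this yields $q\fkc_S\subseteq\fkc$. For ``$\supseteq$'': since $a_1$ is the least value attained on $\fkm$ we have $\fkm\subseteq q\ol{R}$, so $q^{-1}\fkc\fkm\subseteq\fkc\ol{R}=\fkc\subseteq\fkm$; therefore $q^{-1}\fkc\subseteq\fkm:\fkm=S$, and as $q^{-1}\fkc$ is an $\ol{R}$-module this forces $q^{-1}\fkc\subseteq S:\ol{R}=\fkc_S$. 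Multiplying by $q$ and comparing with $R/qR$ then gives $\ell_R(\fkc_S/\fkc)=\ell_R(\fkc/q\fkc)=\ell_R(R/qR)=v(q)=\rme(R)$.

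Finally I would bound $\ell_R(S/R)$. From $q\in\fkm$ we get $S=\fkm:\fkm\subseteq\fkm:(q)=q^{-1}\fkm$, and multiplication by $q$ identifies $q^{-1}\fkm/R$ with $\fkm/qR$, of length $\ell_R(R/qR)-\ell_R(R/\fkm)=\rme(R)-1$. Since $R\subsetneq S$ this gives $1\le\ell_R(S/R)\le\rme(R)-1$, and substituting into the displayed identity yields $\ell_S(S/\fkc_S)\le\ell_R(R/\fkc)-1<\ell_R(R/\fkc)$. For the equality case, the same identity shows $\ell_S(S/\fkc_S)=\ell_R(R/\fkc)-1$ iff $\ell_R(S/R)=\rme(R)-1$ iff $S=q^{-1}\fkm$ (as $S\subseteq q^{-1}\fkm$ and their lengths over $R$ then agree), i.e. iff $\fkm:\fkm=q^{-1}\fkm$; by Lemma \ref{lem2.9} this is equivalent to $\fkm^2=q\fkm$, which is in turn equivalent to $R$ having minimal multiplicity by a short count ($\fkm^2=q\fkm$ gives $\mu_R(\fkm)=\ell_R(\fkm/\fkm^2)=\ell_R(\fkm/q\fkm)=v(q)=\rme(R)$, and conversely minimal multiplicity gives $\ell_R(\fkm^2/q\fkm)=\ell_R(\fkm/q\fkm)-\ell_R(\fkm/\fkm^2)=0$). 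The one delicate point is the conductor identity $\fkc_S=q^{-1}\fkc$; once it is established everything else is length bookkeeping, and no hypothesis on $k$ beyond the standing one is needed.
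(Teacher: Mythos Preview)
Your proof is correct and follows essentially the same approach as the paper: both establish the key identity $\fkc_S=q^{-1}\fkc$ (the paper via $\fkc_S=(R:\fkm):\ol{R}=R:\fkm\ol{R}=R:q\ol{R}$, you via two explicit inclusions) and then reduce the equality case to $\fkm:\fkm=q^{-1}\fkm$, i.e.\ $\fkm^2=q\fkm$ by Lemma~\ref{lem2.9}. The paper's length count is slightly shorter --- it multiplies $S/\fkc_S$ by $q$ to get $\ell_S(S/\fkc_S)=\ell_R(qS/\fkc)\le\ell_R(\fkm/\fkc)=\ell_R(R/\fkc)-1$ directly from $qS\subseteq\fkm$ --- but this is exactly your inequality $\ell_R(S/R)\le\rme(R)-1$ rewritten, so the arguments coincide.
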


\begin{proof}
Note that $\fkc_S=S:\ol{S}=(R:\fkm): \ol{R}=R:\fkm\ol{R}$. Therefore, by noting that $\fkm\ol{R}=q \ol{R}$, we obtain that $\fkc_S=R:q\ol{R}=q^{-1} \fkc$. It follows that $\ell_S(S/\fkc_S) =\ell_R(S/\fkc_S)=\ell_R(qS/\fkc)\le \ell_R(\fkm/\fkc)=\ell_R(R/\fkc)-1$, where the third inequality follows from $qS\subseteq \fkm$.

The equality $\ell_S(S/\fkc_S) = \ell_R(R/\fkc)-1$ is equivalent to saying that $qS=\fkm$. This is also equivalent to saying that $\fkm^2=q\fkm$ by Lemma \ref{lem2.9}.
\end{proof}

\begin{cor}
Assume that either of the following holds:
\begin{enumerate}[\rm(1)]
\item $n\le 3$.
\item $n=4$, $\m I_3=q I_3$, and $R$ is not of minimal multiplicity.
\end{enumerate}
Then there exist only finitely many reflexive Ulrich $R$-modules up to isomorphism.
\end{cor}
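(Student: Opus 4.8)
The plan is to transport the problem, by way of Theorem~\ref{th59}, to the single ring $S=\m:\m$, and then to recognize $S$ as an Arf ring — for which finiteness of the set of reflexive modules is already available, by \cite[Theorem 3.5]{ik} and \cite[Corollary 3.5]{d}.

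First I would dispose of the case in which $R$ is itself Arf: then $\Ref(R)$ is finite by the cited results, and since reflexive Ulrich modules are in particular reflexive we are done. This already covers hypothesis~(1) when $n\le 2$ and when $n=3$ with $R$ of minimal multiplicity. Indeed, $n\le 2$ gives $I_1=I_{n-1}$, hence $\m^2=q\m$ by Lemma~\ref{lem2.5}; since, by Lemma~\ref{rem5.10}, the conductor colength of $\m:\m$ is strictly smaller (so stays $\le 1$), each iterated endomorphism ring $\m:\m$ again has minimal multiplicity and $R$ is Arf. The case $n=3$ with minimal multiplicity is identical, applying Lemma~\ref{lem2.5} now to $I_2=I_{n-1}$ and the previous step to $\m:\m$ (whose conductor colength is then $2$). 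So from now on I may assume that $R$ is not Arf; in both cases of the corollary this means $n\in\{3,4\}$, $R$ is not of minimal multiplicity, and $\m I_3=qI_3$ holds — it is assumed when $n=4$, and for $n=3$ it follows from Corollary~\ref{cor23}, Theorem~\ref{th1}, and Proposition~\ref{pp}\,(2)$\Rightarrow$(1) applied to $q$ (note $v(q)=a_1$).

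Now set $S=\m:\m$. Since $R$ is not of minimal multiplicity — in particular not a discrete valuation ring — neither $R$ nor $S$ is an Ulrich $R$-module: $\m R\neq qR$, while $\m S=qS$ would force $\m:\m=q^{-1}\m$, hence $\m^2=q\m$, by Lemma~\ref{lem2.9}. A direct summand of an Ulrich module is Ulrich, so every reflexive Ulrich $R$-module $M$ has neither $R$ nor $S$ as a direct summand; therefore Theorem~\ref{th59} applies and realizes $M$ as a reflexive $S$-module. An $R$-isomorphism between $S$-stable fractional modules inside $Q(R)=Q(S)$ is automatically $S$-linear, so this gives an injection from the isomorphism classes of reflexive Ulrich $R$-modules into those of reflexive $S$-modules, and it is enough to see that $\Ref(S)$ is finite. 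By Lemma~\ref{rem5.10}, $\ell_S(S/\fkc_S)<\ell_R(R/\fkc)=n$, with equality to $n-1$ only if $R$ has minimal multiplicity; as it does not, $\ell_S(S/\fkc_S)\le n-2\le 2$, and a one-dimensional analytically irreducible local domain whose conductor has colength at most $2$ is Arf — once more, because it and all its iterated rings $\m:\m$ have minimal multiplicity. Hence $S$ is Arf, $\Ref(S)$ is finite, and we are done.

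I expect the main obstacle to be the middle step — verifying that a reflexive Ulrich $R$-module can have neither $R$ nor $S$ as a direct summand, which is exactly where the hypothesis that $R$ is not of minimal multiplicity is used — together with keeping the minimal-multiplicity dichotomy straight throughout. The external inputs (Theorem~\ref{th59}, Lemma~\ref{rem5.10}, and the chain ``conductor colength $\le 2$ $\Rightarrow$ Arf $\Rightarrow$ $\Ref$ finite'') are routine; the real work lies in assembling them and in the two elementary verifications that forcing $n$ to be small makes $R$, respectively $S$, an Arf ring.
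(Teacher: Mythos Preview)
Your proof is correct and follows essentially the same route as the paper: reduce via Theorem~\ref{th59} to showing that $S=\m:\m$ has only finitely many reflexive modules, then use Lemma~\ref{rem5.10} to see $\ell_S(S/\fkc_S)\le 2$, conclude $S$ is Arf, and invoke \cite{ik}. Your version is in fact more explicit than the paper's in two places the paper leaves implicit --- the small cases $n\le 2$ (where $I_3$ is undefined and you instead observe $R$ is already Arf) and the verification that neither $R$ nor $S$ can occur as a direct summand of a reflexive Ulrich module when $R$ lacks minimal multiplicity --- but the underlying argument is the same.
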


\begin{proof}
Set $S=\m:\m$.
By Theorem \ref{th59}, it is enough to show that there exist only finitely many reflexive $S$-modules up to isomorphism.

By Lemma \ref{rem5.10}, $\ell_S(S/\c_S)\le 2$, where $\c_S$ is the conductor of $S$. 
Then, by Lemma \ref{lem2.5}, $S$ has minimal multiplicity. Let $\fkn$ be the maximal ideal of $S$, and set $S_1=\fkn:\fkn$. Then, $\ell_{S_1} (S_1/\fkc_{S_1}) \le 1$. It follows that $S_1$ again has  minimal multiplicity by Lemma \ref{lem2.5}. Therefore, $S_1$ or the endomorphism algebra of the maximal ideal of $S_1$ is a discrete valuation ring.
In any case, we obtain that $S$ is an Arf ring by \cite{Lipman}.

In particular, there exist only finitely many reflexive $S$-modules up to isomorphism by \cite[Corollary 3.6]{ik}.
\end{proof}
%%%%%%%%%%%%%%%%%%%%%%%%%%%%%%%%%%%%%%%%%%%%%%%%%%%%%%%%%%%%%%%%%%%%%%%%%%%%

\section{Reflexive ideals in numerical semigroup rings with small non-gaps} \label{section6}

The purpose of this section is to explore the relation between the finiteness of $\mathrm{T}(R)$ and that of $\mathrm{Ref}_1(R)$. We maintain the notations of Section \ref{section2}. We already saw that both $\mathrm{T}(R)$ and $\mathrm{Ref}_1(R)$ are finite if $n=\ell_R(R/\fkc)\le 3$ (Corollary \ref{cor23} and \cite[Theorem 6.8]{dms}). Thus, we focus on the case of $n=4$. The goal of this section is to prove Theorem \ref{thm4.1}. Let us prepare notations to describe Theorem \ref{thm4.1}. We say that an ideal $I$ is {\it monomial} if $I$ is generated by monomial elements. Set 
\[
\RT(R) = \{ I\in \T(R) \mid \text{$I$ is reflexive}\}.
\]

%Note that by Lemma \ref{3.1}, we can choose a set of representatives of $\Ref_1(R)$ as a subset of $\{\text{ideals containing $\fkc$}\}$.

\begin{thm}\label{thm4.1}
Suppose that $n=4$ and $k$ is infinite. Then the following conditions are equivalent.
\begin{enumerate}[\rm(1)] 
\item For all $I\in \Ref_1(R)$, $I$ is isomorphic to some monomial ideal containing $\fkc$.
\item $\Ref_1(R)$ is finite.
\item $\RT(R)$ is finite.
\item Either one of the following holds true:
\begin{enumerate}[\rm(i)] 
\item $a_2-a_1+a_3\ge a_4$, that is, $\T(R)$ is finite.
\item $2a_3 - a_1 < a_4$.
\end{enumerate}
\end{enumerate}
\end{thm}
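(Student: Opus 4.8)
The plan is to prove the chain of implications $(1)\Rightarrow(2)\Rightarrow(3)\Rightarrow(4)\Rightarrow(1)$, leaning on the structural results already established for $n=4$. The implications $(1)\Rightarrow(2)$ and $(2)\Rightarrow(3)$ are essentially formal: there are only finitely many monomial ideals containing $\fkc$ (since $\fkc=I_4$ has finite colength), so (1) forces $\Ref_1(R)$ to be finite; and since distinct trace ideals are non-isomorphic, the reflexive trace ideals inject into $\Ref_1(R)$, whence (3) follows from (2). For $(3)\Rightarrow(4)$, I would argue by contraposition: assuming both (i) and (ii) fail, i.e. $a_2-a_1+a_3<a_4$ and $2a_3-a_1\ge a_4$, I want to exhibit infinitely many pairwise non-isomorphic reflexive trace ideals. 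The natural candidates are the ideals $J_\alpha^{(i)}$ of Proposition \ref{ppp} (with $i=1$, since $a_2-a_1+a_3<a_4$ says precisely $I_1I_3\ne q I_3$, and one checks $I_2I_4=q'I_4$ always holds as $i+2=4=n$); by Proposition \ref{ppp} these are infinitely many non-isomorphic trace ideals when $k$ is infinite, and the condition $2a_3-a_1\ge a_4$ should be exactly what guarantees that (enough of) these $J_\alpha^{(1)}$ are reflexive rather than collapsing under $I\mapsto R:(R:I)$. Verifying reflexivity of $J_\alpha^{(1)}$ under the hypothesis $2a_3-a_1\ge a_4$ is the computational heart of this direction: one computes $R:J_\alpha^{(1)}$ explicitly using $R:\fkc=\ol R$ and the valuation combinatorics, then computes $R:(R:J_\alpha^{(1)})$ and checks it returns $J_\alpha^{(1)}$.

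For $(4)\Rightarrow(1)$, I would split into the two cases of hypothesis (4). If (i) holds, then $\T(R)=\I(R)$ by Corollary \ref{cor32}, and Theorem \ref{cor3.4} already gives an injection $\Ref_1(R)\hookrightarrow \I(R)\cup\{J_0^{(i)}\}_{i\in\{1,\dots,n-2\}}=\{I_0,\dots,I_4\}\cup\{J_0^{(1)},J_0^{(2)}\}$; since all the $I_j$ are monomial ideals containing $\fkc$, it remains to show $J_0^{(1)}$ and $J_0^{(2)}$ are each isomorphic to a monomial ideal containing $\fkc$ — but $J_0^{(i)}=(t^{a_i})+I_{i+2}$ is already monomial and contains $\fkc=I_4$, so (1) follows. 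If instead (ii) holds, i.e. $2a_3-a_1<a_4$, then $\T(R)$ may be infinite, and I need to show directly that every reflexive ideal is isomorphic to a monomial ideal containing $\fkc$. By Lemma \ref{3.1} it suffices to analyze a reflexive ideal $I$ with $\fkc\subseteq I$; letting $a_i=\min\{v(x)\mid x\in I\}$ and $q\in I$ with $v(q)=a_i$, the cases $i\in\{2,3,4\}$ force $I$ to be one of the monomial ideals $I_2,I_3,I_4$ by colength reasons, so the crux is $i=1$. Here I would show, using $2a_3-a_1<a_4$, that the reflexive hull $R:(R:I)$ of any candidate $I$ with $v$-minimum $a_1$ actually equals a monomial ideal (one of $I_1$, or $\fkm$-primary monomial ideals like $(t^{a_1})+I_3$, $(t^{a_1},t^{a_3})+\fkc$, etc.): the point is that when $2a_3-a_1<a_4$, the value set $v(R:(R:I))$ is large enough that non-monomial generators become redundant modulo $\fkc$.

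The main obstacle I anticipate is the direction $(3)\Rightarrow(4)$, specifically the reflexivity computation for $J_\alpha^{(1)}$ under $2a_3-a_1\ge a_4$ (and symmetrically, in $(4)\Rightarrow(1)$ case (ii), controlling the reflexive hull of an arbitrary ideal with $v$-minimum $a_1$). These require careful bookkeeping with the numerical semigroup $H=\{0<a_1<a_2<a_3<a_4<a_4+1<\cdots\}$ and the dual modules $R:I$, which live in $\ol R=k[[t]]$; the inequalities (i) and (ii) must be shown to be precisely the dividing line between "the reflexive hull kills the extra parameter $\alpha$" and "it does not." I would organize this by first recording, as a preliminary lemma, explicit descriptions of $R:I_j$ and $R:J_\alpha^{(1)}$ as $\ol R$-fractional-ideal-like objects, reducing everything to checking membership of specific powers of $t$. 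A secondary subtlety is making sure the $\m$-primary monomial ideals that appear (not just the integrally closed $I_j$) are accounted for in statement (1) — "monomial ideal containing $\fkc$" is the right level of generality, and I would enumerate them for $n=4$ to confirm finiteness.
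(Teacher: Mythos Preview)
Your overall architecture matches the paper's exactly: the same cycle $(1)\Rightarrow(2)\Rightarrow(3)\Rightarrow(4)\Rightarrow(1)$, the same use of the $J_\alpha^{(1)}$ from Proposition~\ref{ppp} for $(3)\Rightarrow(4)$, and the same appeal to Theorem~\ref{cor3.4} for $(4)(\mathrm{i})\Rightarrow(1)$. The reflexivity computation you flag as the ``computational heart'' of $(3)\Rightarrow(4)$ is indeed what the paper does, producing an explicit element $g=t^{a_3-a_1}(1+x+\cdots+x^\ell)\in R:J_\alpha^{(1)}$ (with $x=-\alpha t^{a_2-a_1}$) whose presence forces $R:(R:J_\alpha^{(1)})=J_\alpha^{(1)}$; your plan to compute $R:J_\alpha^{(1)}$ directly would get there.

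There is, however, a genuine gap in your plan for $(4)(\mathrm{ii})\Rightarrow(1)$. You propose to show that for every ideal $I$ with $\fkc\subseteq I$ and $v$-minimum $a_1$, the reflexive hull $R:(R:I)$ \emph{equals} a monomial ideal. This is false. Under $(\mathrm{ii})$ there exist non-monomial \emph{reflexive} ideals with $v$-minimum $a_1$: for instance $I=(t^{a_1}+\alpha t^{a_2}+\beta t^{a_3})+\fkc$ with $(\alpha,\beta)\ne(0,0)$ is reflexive (one checks $(t^{a_1})+\fkc$ is reflexive under $2a_3-a_1<a_4$, and $I$ is isomorphic to it), so $R:(R:I)=I$ is not monomial. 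A subtler example: when $a_1+a_3=2a_2$, the ideal $(t^{a_1}-\beta^2 t^{a_3},\,t^{a_2}+\beta t^{a_3})+\fkc$ is reflexive and non-monomial for $\beta\ne 0$. In both cases what is true is that $I$ is \emph{isomorphic} to a monomial ideal, via multiplication by a unit $1+x\in\ol R^\times$ with $v(x)>0$. The paper organizes this by enumerating the five shapes $(\mathrm{a})$--$(\mathrm{e})$ an ideal between $\fkc$ and $I_1$ can take, and then either exhibiting such an isomorphism (cases $(\mathrm{b})$, $(\mathrm{c})$, and the special subcase $(\mathrm{e})$ with $a_1+a_3=2a_2$, $\alpha=-\beta^2$) or showing $R:(R:I)=I_1$ (the remaining subcases of $(\mathrm{d})$ and $(\mathrm{e})$). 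Your heuristic ``the value set of $R:(R:I)$ is large enough that non-monomial generators become redundant'' is the right intuition for the latter subcases but not for the former; you need both mechanisms. A minor related oversight: for $i=2$ the ideal need not be $I_2$; it can be $(t^{a_2}+\alpha t^{a_3})+\fkc$, which again is only \emph{isomorphic} to a monomial ideal.
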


To prove Theorem \ref{thm4.1}, we note several lemmas.

\begin{lem}\label{rem4.2}
Let $I$ be an ideal of $R$ containing $\fkc$. Then, $R:I\subseteq \ol{R}$.
\end{lem}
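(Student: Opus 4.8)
\textbf{Proof plan for Lemma~\ref{rem4.2}.}
The statement asks us to show that if $I$ is an ideal of $R$ containing the conductor $\fkc = R:\ol{R}$, then $R:I \subseteq \ol{R}$. The key observation is that the colon operation reverses inclusions: from $\fkc \subseteq I$ we immediately get $R:I \subseteq R:\fkc$. So the whole matter reduces to identifying $R:\fkc$, and the plan is to show $R:\fkc = \ol{R}$.

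The inclusion $\ol{R} \subseteq R:\fkc$ is essentially the definition of the conductor: $\fkc = R:\ol{R}$ means precisely that $\fkc \cdot \ol{R} \subseteq R$, hence $\ol{R} \subseteq R:\fkc$. For the reverse inclusion $R:\fkc \subseteq \ol{R}$, I would use the fact that $\fkc$ contains a non-zerodivisor of $R$ (it is a regular ideal, since $\ol{R}$ is a finitely generated $R$-module and $R$ is a one-dimensional Cohen--Macaulay local ring, so $\fkc$ contains a power of any sufficiently high-order element). Given $x \in R:\fkc$, we have $x\fkc \subseteq R$; since $\fkc$ is an $\ol{R}$-ideal (indeed $\fkc = \fkc\,\ol{R}$, as $\fkc$ is the largest common ideal of $R$ and $\ol{R}$), the module $\fkc$ is a faithful finitely generated $\ol{R}$-module on which $x$ acts by multiplication sending it into $R \subseteq \ol{R}$, hence $x\,\fkc \subseteq \fkc$ after multiplying once more by $\fkc$, or more directly: $x\,\fkc\,\ol{R} = x\,\fkc \subseteq R$, so $x \in R : \fkc\ol{R} = R:\fkc$ is integral over $R$ by the determinantal trick applied to the faithful module $\fkc$. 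Therefore $x \in \ol{R}$, which gives $R:\fkc \subseteq \ol{R}$.

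Combining, $R:\fkc = \ol{R}$, and so $R:I \subseteq R:\fkc = \ol{R}$, as desired. I do not anticipate a genuine obstacle here; the only point requiring a little care is the standard fact that $R:\fkc = \ol{R}$ (equivalently that every element sending the conductor into $R$ is integral over $R$), which follows from the determinantal characterization of integral dependence applied to the conductor viewed as a faithful $\ol{R}$-module, together with the fact that $\fkc$ is an ideal of $\ol{R}$. One could alternatively cite this directly as a well-known property of conductors in the one-dimensional analytically unramified setting already in force in the paper.
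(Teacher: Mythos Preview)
Your argument is correct: you reduce to showing $R:\fkc=\ol{R}$, and for the nontrivial inclusion you use that $\fkc$ is an $\ol{R}$-ideal to conclude $x\fkc\subseteq\fkc$ and then invoke the determinantal trick. (Your write-up is a bit tangled at the step from $x\fkc\subseteq R$ to $x\fkc\subseteq\fkc$; the clean way is: $x\fkc$ is an $\ol{R}$-submodule of $R$, hence for every $y\in x\fkc$ one has $y\ol{R}\subseteq R$, i.e.\ $y\in R:\ol{R}=\fkc$.)

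The paper takes a shorter, more concrete route. In the numerical semigroup setting of that section one has $\fkc=t^{a_n}\ol{R}$, and the paper simply computes
\[
R:\fkc \;=\; R:t^{a_n}\ol{R} \;=\; t^{-a_n}(R:\ol{R}) \;=\; t^{-a_n}\fkc \;=\; \ol{R}.
\]
So the paper exploits the explicit principal generator of $\fkc$ over the DVR $\ol{R}$, giving a one-line calculation. Your approach is more general---it proves $R:\fkc=\ol{R}$ for any one-dimensional analytically unramified local ring without using the uniformizer---at the cost of invoking Cayley--Hamilton. Either is perfectly adequate here.
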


\begin{proof}
Since $\fkc \subseteq I$, we obtain that $R:I \subseteq R:\fkc=R:t^{a_n}\ol{R}=t^{-a_n}(R:\ol{R})=\ol{R}$.
\end{proof}

\begin{lem}\label{lem4.2}
Let $I=(f) + \fkc$ be an ideal of $R$, where $f\in R$. Then $I\cong (t^{v(f)}) + \fkc$.
\end{lem}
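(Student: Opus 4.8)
The plan is to realize the claimed isomorphism by multiplication by a single unit of the discrete valuation ring $\ol{R}$. We may assume $f\neq 0$, so that $v(f)$ makes sense; set $m:=v(f)$, and note $m\in H$ since $f\in R$, so that $t^m\in R$ and $(t^m)+\fkc$ is a genuine ideal of $R$. The element to use is $u:=t^m/f\in Q(R)$. Since $v(u)=m-v(f)=0$, both $u$ and $u^{-1}=f/t^m$ lie in $\ol{R}$; in other words $u$ is a unit of $\ol{R}$.

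The first step is to observe that multiplication by $u$ is an $R$-module isomorphism of $I$ onto the fractional ideal $uI$, because $u$ is a non-zerodivisor of $Q(R)$. The second step is to compute $uI=u(f)+u\fkc=(t^m)+u\fkc$. The remaining step is to show $u\fkc=\fkc$: here I would use the standard fact that the conductor $\fkc=R:\ol{R}$ is an ideal of $\ol{R}$, so that $u\in\ol{R}$ gives $u\fkc\subseteq\fkc$, while $u^{-1}\in\ol{R}$ gives $\fkc=u^{-1}(u\fkc)\subseteq u\fkc$; hence $u\fkc=\fkc$. Combining, $uI=(t^m)+\fkc=(t^{v(f)})+\fkc$, which is exactly the desired isomorphism.

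There is no real obstacle; the only points to watch are the degenerate case $f=0$ (where $I=\fkc$ and there is nothing to prove) and the elementary fact that $\fkc$ is stable under $\ol{R}$, which is immediate from the definition of the conductor. The argument also handles the case $v(f)\ge a_n$ uniformly, where both ideals simply coincide with $\fkc$.
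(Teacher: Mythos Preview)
Your proof is correct and follows essentially the same approach as the paper: both arguments exhibit the isomorphism as multiplication by the unit $f/t^{v(f)}$ (or its inverse) of $\ol{R}$, and both rely on the key fact that $\fkc$ is an $\ol{R}$-ideal, hence stable under multiplication by such units. The paper writes $f=t^{v(f)}(1+x)$ with $v(x)\ge 1$ and multiplies $(t^{v(f)})+\fkc$ by $1+x$; you simply multiply $I$ by the inverse $u=t^{v(f)}/f$, which is a minor variant and arguably slightly cleaner since it avoids the tacit normalization of the leading coefficient of $f$.
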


\begin{proof}
%We may assume that $f\not\in \fkc$. Set $a_i=v(f)$ for $0\le i \le n-1$. 
%Then, we can replace $f$ with the form 
%\[
%t^{a_i} + \alpha_{i+1}t^{a_{i+1}} + \alpha_{i+2}t^{a_{i+2}} + \cdots + \alpha_{n-1}t^{a_{n-1}},
%\]
%where $\alpha_{i+1}, \dots, \alpha_{n-1} \in k$. 
$f$ can be written in the form $t^{v(f)} + t^{v(f)}x$, where $x\in \ol{R}$ with $v(x)\ge 1$. Hence, $\fkc=(1+x)\fkc$ and 
\[
(t^{v(f)}) + \fkc \cong (1+x) [(t^{v(f)}) + \fkc] = (f) + (1+x)\fkc = I.
\]
\end{proof}

\begin{lem}\label{lem4.3}
Let $I$ be an ideal of $R$. Let $a_i=\min \{v(f) \in H \mid f\in I\}$. Then 
\begin{enumerate}[\rm(1)] 
\item $R + t^{a_n - a_i} \ol{R} \subseteq R:I$.
\item $R:[R + t^{a_n - a_i} \ol{R}]=I_i$. Hence, $I\subseteq R:(R:I)\subseteq I_i$.
\end{enumerate}
\end{lem}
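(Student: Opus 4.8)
The statement to prove is Lemma~\ref{lem4.3}: for an ideal $I$ of $R=k[[H]]$ with $a_i=\min\{v(f)\mid f\in I\}$, we have (1) $R+t^{a_n-a_i}\ol{R}\subseteq R:I$ and (2) $R:[R+t^{a_n-a_i}\ol{R}]=I_i$, whence $I\subseteq R:(R:I)\subseteq I_i$.

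\medskip

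The plan is as follows. First I would treat part (1). Since $R\subseteq R:I$ trivially (as $I$ is an ideal), it suffices to show $t^{a_n-a_i}\ol{R}\subseteq R:I$, i.e. $t^{a_n-a_i}\ol{R}\cdot I\subseteq R$. Pick any $g\in I$; then $v(g)\ge a_i$, so $v(t^{a_n-a_i}g)\ge a_n-a_i+a_i=a_n$, and since every element of $Q(R)$ of value $\ge a_n$ lies in $\fkc=I_n\subseteq R$, we get $t^{a_n-a_i}g\in R$. More precisely $t^{a_n-a_i}g\in\fkc=t^{a_n}\ol R$, hence $t^{a_n-a_i}\ol R\cdot g\subseteq t^{a_n}\ol R\cdot\ol R\cdot t^{-a_i}$... cleaner: $t^{a_n-a_i}g\in\fkc$ and $\fkc$ is an $\ol R$-module, so $t^{a_n-a_i}\ol R\, g\subseteq\fkc\subseteq R$. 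Since this holds for all generators $g$ of $I$, part (1) follows.

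\medskip

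For part (2), I would compute $R:[R+t^{a_n-a_i}\ol{R}]=(R:R)\cap(R:t^{a_n-a_i}\ol{R})=R\cap t^{-(a_n-a_i)}(R:\ol{R})=R\cap t^{-(a_n-a_i)}\fkc=R\cap t^{-(a_n-a_i)}t^{a_n}\ol{R}=R\cap t^{a_i}\ol{R}$. Now $R\cap t^{a_i}\ol{R}=\{x\in R\mid v(x)\ge a_i\}=I_i$ by the very definition of $I_i$ given in Setup/Remark. This gives the equality. The final chain of inclusions then follows: $I\subseteq R:(R:I)$ is the canonical biduality inclusion, and applying the contravariant functor $R:(-)$ to the inclusion $R+t^{a_n-a_i}\ol{R}\subseteq R:I$ from part (1) reverses it to $R:(R:I)\subseteq R:[R+t^{a_n-a_i}\ol{R}]=I_i$.

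\medskip

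I do not expect a serious obstacle here; the only points requiring care are the standard identities $R:(J_1+J_2)=(R:J_1)\cap(R:J_2)$ and $R:(t^m\ol R)=t^{-m}(R:\ol R)=t^{-m}\fkc$, together with the observation that $R:\ol R=\fkc=t^{a_n}\ol R$ (which uses $a_{n+j}=a_n+j$ for $j\ge 0$, so that $\fkc$ as a fractional ideal equals $t^{a_n}\ol R$). One should also make sure that $R:I$ behaves well as a colon in $Q(R)$, but this is exactly the setup in the Convention. So the proof is essentially a direct valuation-theoretic computation, and the write-up should be short.
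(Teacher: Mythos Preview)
Your proposal is correct and follows essentially the same route as the paper: both argue (1) by noting $t^{a_n-a_i}I\subseteq\fkc$ via valuations and then using that $\fkc$ is an $\ol{R}$-module, and both prove (2) by splitting the colon as an intersection, computing $R:t^{a_n-a_i}\ol{R}=t^{a_i}\ol{R}$, and identifying $R\cap t^{a_i}\ol{R}=I_i$. The only cosmetic difference is that you spell out the biduality inclusion $I\subseteq R:(R:I)$ explicitly, whereas the paper leaves it implicit.
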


\begin{proof}
(1): $R \subseteq R:I$ is trivial. Note that $t^{a_n - a_i} I \subseteq t^{a_n} \ol{R} = \fkc$ since $v(f)\ge a_i$ for all $f\in I$. Hence, $t^{a_n - a_i} I \ol{R}\subseteq R$, that is, $t^{a_n - a_i} \ol{R} \subseteq R:I$.

(2): Note that $R:[R + t^{a_n - a_i} \ol{R}]=(R:R)\cap (R:t^{a_n - a_i} \ol{R})$. On the other hand, we obtain that 
\[
R:t^{a_n - a_i} \ol{R}=t^{a_i - a_n} (R:\ol{R})=t^{a_i - a_n} t^{a_n} \ol{R}=t^{a_i} \ol{R}.
\]
Hence, $R:[R + t^{a_n - a_i} \ol{R}]=R\cap t^{a_i} \ol{R}=I_i$. Therefore, by (1), we obtain that $R:(R:I) \subseteq R:[R + t^{a_n - a_i} \ol{R}]=I_i$. 
\end{proof}

\begin{lem} \label{l65}
Assume that $n=4$ and $a_2-a_1+a_3\not\in H$.
Then the following hold true.
\begin{enumerate}[\rm(1)]
\item For each $\alpha \in R$, $J_\alpha^{(1)}:=(t^{a_1}+\alpha t^{a_2})+I_3\in \T(R)$.
\item Let $\alpha, \beta\in k$. If $\alpha \ne \beta$, then $J_\alpha^{(1)} \ne J_\beta^{(1)}$.
%\item If $k$ is infinite, then $\T(R)$ is an infinite set.
\end{enumerate}
\end{lem}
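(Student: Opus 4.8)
The plan is to deduce Lemma \ref{l65} from Proposition \ref{ppp} applied with $i=1$ and the monomial choices $q=t^{a_1}$, $q'=t^{a_2}$; with these choices $J_\alpha^{(1)}=(q+\alpha q')+I_{i+2}=(t^{a_1}+\alpha t^{a_2})+I_3$ is exactly the ideal in the statement. Thus it suffices to verify the two hypotheses of Proposition \ref{ppp} in this case, namely $I_1I_3\ne t^{a_1}I_3$ and $I_2I_4=t^{a_2}I_4$.

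For the first, I would argue by contraposition using (the proof of) the implication (2)$\Rightarrow$(3) of Theorem \ref{t24}: if $I_1I_3=t^{a_1}I_3$ held, then taking $t^{a_2}\in I_2$ and $t^{a_3}\in I_3$ gives $t^{-a_1}t^{a_2}t^{a_3}\in I_3\subseteq R$, so $a_2-a_1+a_3\in H$, contrary to hypothesis; hence $I_1I_3\ne t^{a_1}I_3$. (Alternatively one may phrase this via Proposition \ref{pp}, which also yields $I_1I_3\ne rI_3$ for every $r$ with $v(r)=a_1$.) For the second, since $n=4$ we have $I_2I_4=I_{n-2}I_n$, and, as recorded in the proofs of Corollary \ref{cc} and Theorem \ref{t24}, the equality $I_{n-2}I_n=pI_n$ holds for every $p\in R$ with $v(p)=a_{n-2}$; taking $p=t^{a_2}$ gives $I_2I_4=t^{a_2}I_4$.

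Granting these, part (1) is immediate from Proposition \ref{ppp}(1). For part (2), Proposition \ref{ppp}(2) gives $J_\alpha^{(1)}\not\supseteq J_\beta^{(1)}$ whenever $\alpha-\beta\notin\fkm$; lifting $\alpha,\beta\in k$ to $R$, the assumption $\alpha\ne\beta$ in $k$ forces $\alpha-\beta$ to be a unit of $R$, so $\alpha-\beta\notin\fkm$, whence $J_\alpha^{(1)}\not\supseteq J_\beta^{(1)}$ and, by symmetry, $J_\beta^{(1)}\not\supseteq J_\alpha^{(1)}$; in particular $J_\alpha^{(1)}\ne J_\beta^{(1)}$. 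The only step requiring any care is the translation of the gap condition $a_2-a_1+a_3\notin H$ into the ideal-theoretic condition $I_1I_3\ne t^{a_1}I_3$; this is not really an obstacle, but is the place where one should quote the correct direction of Theorem \ref{t24} (or Proposition \ref{pp}) rather than reprove it, after which everything else is a direct application of Proposition \ref{ppp}.
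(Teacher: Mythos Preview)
Your proof is correct and follows exactly the same route as the paper: verify the two hypotheses of Proposition \ref{ppp} for $i=1$, $q=t^{a_1}$, $q'=t^{a_2}$ (namely $I_1I_3\ne t^{a_1}I_3$ from the gap condition and $I_2I_4=t^{a_2}I_4$ since $n=4$), then read off (1) and (2) directly. Your write-up simply spells out the details the paper leaves implicit; note also that for (2) the non-containment $J_\alpha^{(1)}\not\supseteq J_\beta^{(1)}$ already gives $J_\alpha^{(1)}\ne J_\beta^{(1)}$ without invoking symmetry.
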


\begin{proof}
Since $n=4$, the equality $I_2I_4=t^{a_2}I_4$ holds.
On the other hand, the inequality $I_1I_3\not=t^{a_1}I_3$ follows by the assumption $a_2-a_1+a_3\not\in H$.
So we may apply Proposition \ref{ppp}.
\end{proof}

Now we prove Theorem \ref{thm4.1}.

\begin{proof}[Proof of Theorem \ref{thm4.1}]
(1)$\Rightarrow$(2): This is clear. 

(2)$\Rightarrow$(3): Recall that for $I, J\in \T(R)$, $I=J$ if $I\cong J$; see \cite[Corollary 1.2(a)]{hr} for example. Hence, we can regard $\RT(R)$ as a subset of $\Ref_1(R)$. Thus, (2)$\Rightarrow$(3) holds.

(3)$\Rightarrow$(4): Suppose that $a_2-a_1+a_3< a_4$ and $2a_3 - a_1 \ge a_4$. It is enough to prove that $\RT(R)$ is infinite. Let $\alpha\in k$ and $I=(t^{a_1} + \alpha t^{a_2}, t^{a_3}) + \fkc$. Then, it is enough to show that $I\in \Ref_1(R)$. Indeed, by noting that $a_3<a_2-a_1+a_3< a_4$ implies that $a_2-a_1+a_3\not\in H$, we have  $I=J_{\alpha}^{(1)}\in \T(R)$ by Lemma \ref{l65}(1). We further prove that $I=J_{\alpha}^{(1)}$ is a reflexive ideal for each $\alpha\in k$. Then, we complete the proof since $\RT(R)$ is infinite by Lemma \ref{l65}(2).

Set $f=t^{a_1} + \alpha t^{a_2}$ and $x=-\alpha t^{a_2-a_1}$. Then $f=t^{a_1} (1-x)$. Set $g=t^{a_3-a_1}(1+x+\cdots +x^\ell)$, where $\ell\ge a_4$. We obtain that 
\begin{align*}
fg=t^{a_3}(1-x^{\ell+1}), \quad t^{a_3}g=t^{2a_3-a_1}(1+x+\cdots +x^\ell), \quad \text{and} \quad g\fkc\subseteq \fkc.
\end{align*}
Since we assume that $2a_3 - a_1 \ge a_4$, it follows that $g\in R:I$. By Lemma \ref{lem4.3}(1), $R + t^{a_n - a_1} \ol{R} + (g) \subseteq R:I$. Hence, $R:(R:I) \subseteq R:[R + t^{a_n - a_1} \ol{R} + (g)]=I_1 \cap (R:g)$ by Lemma \ref{lem4.3}(2). Let $h\in I_1 \cap (R:g)$. We can write $h=d_1t^{a_1} + d_2t^{a_2} + d_3t^{a_3} + \cdots$, where $d_i\in k$. Then, 
\begin{align*}
gh &\equiv t^{a_3-a_1}(1+x+\cdots +x^\ell) (d_1t^{a_1} + d_2t^{a_2} + d_3t^{a_3}) & (\mod \fkc)\\
&\equiv (1+x+\cdots +x^\ell)(d_1t^{a_3} + d_2t^{a_2+a_3-a_1} ) & (\mod \fkc).
\end{align*}
By noting that $v(x)=a_2-a_1$, the $gh$'s coefficient of degree $a_2+a_3-a_1$ is $-\alpha d_1 +d_2$. On the other hand, we have $gh\in R$ and $a_2-a_1+a_3\not\in H$. Hence, we obtain that $-\alpha d_1 +d_2=0$. It follows that 
\[
h= d_1(t^{a_1} + \alpha t^{a_2}) + d_3t^{a_3} + \cdots \quad  \in (t^{a_1} + \alpha t^{a_2}, t^{a_3}) + \fkc=I.
\]
Hence, $I_1 \cap (R:g)\subseteq I$. In conclusion, we obtain that $I\subseteq R:(R:I)\subseteq I_1 \cap (R:g)\subseteq I$. Hence $I$ is a reflexive ideal.

(4)(i)$\Rightarrow$(1): This follows from Theorem \ref{cor3.4}.

(4)(ii)$\Rightarrow$(1): Suppose that $I$ is a reflexive ideal. By Lemma \ref{3.1}, we may assume that $\fkc \subseteq I$. Then $I$ forms one of the following. Let $\alpha, \beta \in k$.
\begin{enumerate}[\rm(a)] 
\item $I=I_0, I_1, I_2, I_3, I_4$.
\item $I=(t^{a_2} + \alpha t^{a_3}) + \fkc$.
\item $I=(t^{a_1} + \alpha t^{a_2} + \beta t^{a_3}) + \fkc$.
\item $I=(t^{a_1} + \alpha t^{a_2}, t^{a_3}) + \fkc$.
\item $I=(t^{a_1} + \alpha t^{a_3}, t^{a_2} + \beta t^{a_3}) + \fkc$.
\end{enumerate}

For the case (a), there is nothing to prove. By Lemma \ref{lem4.2}, in the cases (b) and (c), $I$ is isomorphic to some monomial ideal containing $\fkc$. Thus, it is enough to prove the following claims:

\begin{claim}\label{claim2}
%Suppose that $n=4$. 
Suppose that $2a_3 - a_1 < a_4$. Let $I=(t^{a_1} + \alpha t^{a_2}, t^{a_3}) + \fkc$. Then $R:(R:I)=I_1$. 
\end{claim}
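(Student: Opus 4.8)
The plan is to establish the two inclusions of $R:(R:I)=I_1$. The inclusion $\subseteq$ is immediate: writing $f:=t^{a_1}+\alpha t^{a_2}$, the generator $f$ of $I$ has value $a_1$ while every element of $I$ has value $\ge a_1$, so $a_1=\min\{v(x)\mid x\in I\}$, and Lemma \ref{lem4.3}(2) gives $R:(R:I)\subseteq I_1$. For the converse I would prove the stronger equality $R:I=R:\fkm$; granting this, applying $\Hom_R(-,R)$ yields $R:(R:I)=R:(R:\fkm)\supseteq\fkm=I_1$, which together with the first inclusion finishes the proof. The inclusion $R:\fkm\subseteq R:I$ is clear from $I\subseteq\fkm$, so everything reduces to showing $R:I\subseteq R:\fkm$.

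To prove $R:I\subseteq R:\fkm$, take $g\in R:I$. By Lemma \ref{rem4.2} we have $g\in\ol R=k[[t]]$; write $g=c_0+h$ with $c_0\in k$ and $h:=g-c_0$, so either $h=0$ or $v(h)\ge 1$, and it suffices to treat the latter case. From $t^{a_3}\in I$ we get $g t^{a_3}\in R$, hence $h t^{a_3}=g t^{a_3}-c_0 t^{a_3}\in R$; since $H$ contains no element strictly between $a_3$ and $a_4$, the value $v(h)+a_3$ of $h t^{a_3}$ must be $\ge a_4$, so $v(h)\ge a_4-a_3$. From $f\in I$ we get $g f\in R$, and since $c_0 f\in\fkm\subseteq R$ we obtain $h f=h t^{a_1}+\alpha h t^{a_2}\in R$. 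Because $v(h t^{a_1})=v(h)+a_1<v(h)+a_2=v(h t^{a_2})$, the leading monomial of $h f$ is $c_{v(h)}t^{v(h)+a_1}$ with nonzero coefficient, so $v(h f)=v(h)+a_1\in H$. Combining $v(h)\ge a_4-a_3$ with the hypothesis $2a_3-a_1<a_4$ gives $v(h)+a_1\ge a_4-a_3+a_1>a_3$, and as this is an element of $H$ lying above $a_3$ it must be $\ge a_4$; hence $v(h)\ge a_4-a_1$. Then every element of $h\fkm$ has value $\ge a_4$, so $h\fkm\subseteq\fkc\subseteq\fkm$, whence $g\fkm=c_0\fkm+h\fkm\subseteq\fkm\subseteq R$, i.e.\ $g\in R:\fkm$.

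The bookkeeping ingredients — $I\subseteq\fkm$, $c_0 f\in\fkm$, $\fkc=\{x\in\ol R\mid v(x)\ge a_4\}$, and the elementary value estimates — are routine consequences of Setup \ref{setup} and Lemmas \ref{rem4.2} and \ref{lem4.3}. The one delicate point, and the only place where the hypothesis $2a_3-a_1<a_4$ enters, is the passage from $h f\in R$ to $v(h)\ge a_4-a_1$: one must check that no term of $\alpha h t^{a_2}$ can reach the low degree $v(h)+a_1$ and cancel the leading monomial $c_{v(h)}t^{v(h)+a_1}$ of $hf$, and that the hypothesis is precisely what forces $v(h)+a_1>a_3$, so that membership in $H$ pushes $v(hf)$ all the way up to the conductor. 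I expect this non-cancellation-and-gap step to be the main obstacle; the remainder is straightforward manipulation of values.
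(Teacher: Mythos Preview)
Your proof is correct and follows essentially the same approach as the paper. Both arguments reduce to showing that any $g\in R:I$, written as $g=c_0+h$ with $c_0\in k$ and $v(h)\ge 1$, must satisfy $v(h)\ge a_4-a_1$; the paper packages the conclusion as $R:I=R+t^{a_4-a_1}\ol{R}$ and invokes Lemma~\ref{lem4.3}(2), while you package it as $R:I\subseteq R:\fkm$ and use the bidual inclusion $\fkm\subseteq R:(R:\fkm)$, but the core valuation estimate is identical (the paper merely swaps the order in which the two generator conditions $ht^{a_3}\in R$ and $hf\in R$ are applied). One small remark: the ``non-cancellation'' step you flag as delicate is in fact automatic, since $v$ is a valuation on $\ol{R}$ and hence $v(hf)=v(h)+v(f)=v(h)+a_1$; the only genuine content is the gap argument forcing $v(h)+a_1>a_3$.
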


\begin{claim}\label{claim3}
%Suppose that $n=4$. 
Suppose that $2a_3 - a_1 < a_4$. Let $I=(t^{a_1} + \alpha t^{a_3}, t^{a_2} + \beta t^{a_3}) + \fkc$. Then the following hold true.
\begin{enumerate}[\rm(d-1)] 
\item If $a_1 + a_3 \ne 2a_2$, then $R:(R:I)=I_1$.
\item If $a_1 + a_3 = 2a_2$ and $\alpha\ne -\beta^2$, then $R:(R:I)=I_1$.
\item If $a_1 + a_3 = 2a_2$ and $\alpha = -\beta^2$, then $I\cong (t^{a_1}, t^{a_2}) + \fkc$.
\end{enumerate}
\end{claim}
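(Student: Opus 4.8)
The plan is to treat the three assertions separately: (d-3) by an explicit change of generators, and (d-1), (d-2) by a power-series bidual computation. For (d-3), assume $a_1+a_3=2a_2$ and $\alpha=-\beta^2$, and put $d=a_2-a_1=a_3-a_2$, so $a_3-a_1=2d$. Then I would factor $t^{a_1}+\alpha t^{a_3}=t^{a_1}(1-\beta^2t^{2d})=t^{a_1}(1-\beta t^d)(1+\beta t^d)$ and $t^{a_2}+\beta t^{a_3}=t^{a_2}(1+\beta t^d)$; the element $u:=(1+\beta t^d)^{-1}$ is a unit of $\ol R=k[[t]]$, hence of $Q(R)$, and $u\fkc=\fkc$ because $\fkc=t^{a_4}\ol R$ is an $\ol R$-module (compare the proof of Lemma \ref{lem4.2}). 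Since $u(t^{a_1}+\alpha t^{a_3})=t^{a_1}-\beta t^{a_2}$ and $u(t^{a_2}+\beta t^{a_3})=t^{a_2}$, multiplication by $u$ is an $R$-isomorphism $I\xrightarrow{\sim}uI=(t^{a_1}-\beta t^{a_2})R+t^{a_2}R+\fkc=(t^{a_1},t^{a_2})+\fkc$, the required monomial ideal containing $\fkc$.

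For (d-1) and (d-2) I want $R:(R:I)=I_1$. Since every element of $I=(f,g)+\fkc$ has value $\ge a_1$ while $f=t^{a_1}+\alpha t^{a_3}$ has value $a_1$, Lemma \ref{lem4.3}(2) gives $I\subseteq R:(R:I)\subseteq I_1$, and the same lemma applied to $I_1$ gives $R:(R:I_1)=I_1$; so it is enough to show $R:I\subseteq R:I_1$, because then $I_1=R:(R:I_1)\subseteq R:(R:I)\subseteq I_1$. Fix $x\in R:I$. By Lemma \ref{rem4.2}, $x\in\ol R$, and writing $x=\sum_{j\ge0}c_jt^j$, a direct check (using that the gaps of $H$ above $a_1$ are $\{a_1+1,\dots,a_2-1\}\cup\{a_2+1,\dots,a_3-1\}\cup\{a_3+1,\dots,a_4-1\}$ and that everything of value $\ge a_4$ lies in $\fkc$) shows $x\in R:I_1$ if and only if $c_1=c_2=\cdots=c_{a_4-a_1-1}=0$. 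So that is what I would deduce from the conditions $xf\in R$ and $xg\in R$.

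Reading off the vanishing of the coefficients of $xf=xt^{a_1}+\alpha xt^{a_3}$ at the gaps of $H$ yields $c_j=0$ for $1\le j\le a_3-a_1-1$ with $j\ne a_2-a_1$, together with the recursions $c_j=-\alpha\,c_{j-(a_3-a_1)}$ for $a_3-a_1+1\le j\le a_4-a_1-1$. The hypothesis $2a_3-a_1<a_4$, i.e. $a_4-a_3-1\ge a_3-a_1$, also forces $2a_2-a_1<a_4$ and $a_2+a_3-a_1<a_4$, and is precisely what keeps all exponents that occur inside the window $(a_3,a_4)$ of consecutive gaps; iterating these recursions together with the parallel ones coming from $xg=xt^{a_2}+\beta xt^{a_3}\in R$ then forces $c_{a_3-a_1}=0$ and, in fact, $c_j=0$ for every $1\le j\le a_4-a_1-1$ with the single possible exception $j=a_2-a_1$. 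To kill this last coefficient I would use $xg\in R$ once more: in case (d-1), where $a_1+a_3\ne2a_2$, the exponent $2a_2-a_1$ is a gap of $H$ and the coefficient of $xg$ there is $c_{a_2-a_1}+\beta\,c_{2a_2-a_1-a_3}=0$ with $c_{2a_2-a_1-a_3}$ already known to vanish; in case (d-2), where $a_1+a_3=2a_2$, I would instead look at the gap $3a_2-2a_1$, where the $xg$-coefficient combines with the $xf$-recursions into a scalar multiple of $c_{a_2-a_1}$ whose coefficient vanishes exactly when $\alpha=-\beta^2$. Since $\alpha\ne-\beta^2$ in (d-2), in both cases $c_{a_2-a_1}=0$, hence $x\in R:I_1$, which finishes (d-1) and (d-2).

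The hard part will be the bookkeeping in the previous paragraph: one must check that each exponent that shows up --- those of the form $a_2-a_1+\ell(a_3-a_1)$, and $2a_2-a_1$, $3a_2-2a_1$, $2a_3-a_1$ --- really lies in a gap of $H$ below $a_4$ (this is where $2a_3-a_1<a_4$ and $a_2<a_3$ get used repeatedly), and one must separately dispose of the degenerate subcases $\alpha=0$ and $\beta=0$, in which one generator of $I$ is already a monomial and the corresponding relations degenerate, so as to be sure that $c_{a_2-a_1}$ is the only surviving coefficient and that the scalar multiplying it is identified correctly. The remaining case $a_1+a_3=2a_2$, $\alpha=-\beta^2$ is exactly the one where that scalar is zero, which is why it has to be handled separately as (d-3).
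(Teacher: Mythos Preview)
Your overall plan is sound and, for (d-2) and (d-3), essentially matches the paper's argument; your (d-3) is identical up to replacing the unit $1+\beta t^d$ by its inverse. Two points deserve comment.

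For (d-1) the paper takes a considerably shorter route: rather than a coefficient-by-coefficient analysis, it argues purely by valuation. Writing $g=c_0+g'$ with $c_0\in k$ and $v(g')>0$, the conditions $g'(t^{a_1}+\alpha t^{a_3})\in R$ and $g'(t^{a_2}+\beta t^{a_3})\in R$ force $v(g')+a_1\in H$ and $v(g')+a_2\in H$. Setting $v(g')+a_1=a_i$ with $i\ge 2$, one then has $a_i-a_1+a_2\in H$; the hypotheses $a_1+a_3\ne 2a_2$ and $2a_3-a_1<a_4$ rule out $i=2$ and $i=3$ directly, so $v(g')\ge a_4-a_1$ and $g'\in t^{a_4-a_1}\ol R$. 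This bypasses all of your recursion bookkeeping.

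For (d-2) your identification of the decisive gap is slightly off. At $m=3a_2-2a_1=a_1+3s$ (with $s=a_2-a_1$) the $xg$-coefficient is $c_{2s}+\beta c_s$, but the $xf$-recursion does \emph{not} determine $c_{2s}$, since $a_1+2s=a_3\in H$. What actually produces the factor $\alpha+\beta^2$ is the combination of the relations at $a_1+3s$ (from both $xf$ and $xg$, giving $c_{3s}=-\alpha c_s$ and $c_{2s}=-\beta c_s$) with the $xg$-relation at $a_1+4s=2a_3-a_1$ (giving $c_{3s}+\beta c_{2s}=0$); substituting yields $(\alpha+\beta^2)c_s=0$. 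The paper packages this more uniformly: for every $x$ with $x,x+s\notin H$ it writes down four linear relations among $c_{x-a_1-2s},c_{x-a_1-s},c_{x-a_1},c_{x-a_1+s}$, eliminates to get $(\alpha+\beta^2)c_{x-a_1-2s}=0$, and then lets $x$ range over $[a_3+1,a_4-s-1]$ to kill every $c_j$ with $1\le j\le a_4-a_1-1$ at once, rather than first reducing to the single free parameter $c_{a_2-a_1}$. Your strategy works too, but note that your intermediate claim ``the single possible exception $j=a_2-a_1$'' is imprecise: before the final step the surviving coefficients are all the $c_{a_2-a_1+\ell(a_3-a_1)}$, each a scalar multiple of $c_{a_2-a_1}$.
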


\begin{proof}[Proof of Claim \ref{claim2}]
It is enough to prove that $R:I \subseteq R + t^{a_4 - a_1} \ol{R}$. Indeed, if $R:I \subseteq R + t^{a_4 - a_1} \ol{R}$, then we have $R:I = R + t^{a_4 - a_1} \ol{R}$ by Lemma \ref{lem4.3}(1). Hence, $R:(R:I)=I_1$ by Lemma \ref{lem4.3}(2).

Let $g\in R:I$. Then, by Lemma \ref{rem4.2}, we can write $g=c_0 + g'$, where $c_0\in k$ and $g'\in R:I$ such that $v(g')>0$. Then $g'(t^{a_1} + \alpha t^{a_2})\in R$ and $g't^{a_3}\in R$ since $g'I\subseteq R$. This proves that 
\[
v(g') + a_1\in H \quad \text{and} \quad v(g') + a_3\in H.
\]
Hence, we have $v(g')+a_1=a_i$ for some $i\ge 2$, and $a_i - a_1 + a_3\in H$.
%\begin{align*}
%a_i - a_1 + a_3\in H.
%\end{align*} 
On the other hand, by the assumption, we have $a_3 < a_3 + a_2 - a_1<2a_3 - a_1 < a_4$. Thus, $a_3 + a_2 - a_1, 2a_3 - a_1\not\in H$. This proves that $i\ne 2,3$. Therefore, $v(g')\ge a_4 - a_1$, that is, $g'\in t^{a_4 - a_1}\ol{R}$. It follows that $g=c_0 + g' \in R + t^{a_4 - a_1} \ol{R}$.
\end{proof}

\begin{proof}[Proof of Claim \ref{claim3}]
(d-1): This proof proceeds in the same way as the proof of Claim \ref{claim2}. %By the same reason as Claim \ref{claim2} that 
As we explain in the beginning of the proof of Claim \ref{claim2}, it is enough to prove that $R:I \subseteq R + t^{a_4 - a_1} \ol{R}$. Let $g\in R:I$ and write $g=c_0 + g'$, where $c_0\in k$ and $g'\in R:I$ such that $v(g')>0$. 
Then $g'(t^{a_1} + \alpha t^{a_3})\in R$ and $g'(t^{a_2} + \beta t^{a_3})\in R$ since $g'I\subseteq R$. This proves that 
\[
v(g') + a_1\in H \quad \text{and} \quad v(g') + a_2\in H.
\]
Hence, we have $v(g')+a_1=a_i$ for some $i\ge 2$, and $a_i - a_1 + a_2=a_j$ for some $j\ge 3$. We show that $i\ge 4$. Assume that $i=2$. Then $2a_2 - a_1=a_j$ for some $j\ge 3$. By the assumption of (d-1), we obtain that $j\ne 3$. But, because $2a_2-a_1<2a_3 - a_1 < a_4$, $j\ge 4$ is also impossible. Thus, $i\ne 2$.
Assume that $i=3$. Then $a_3 - a_1 + a_2=a_j$ for some $j\ge 3$. Since $a_3<a_3 - a_1 + a_2$, $j\ne 3$. It follows that $a_3 - a_1 + a_2\ge a_4$. This contradicts for the assumption $2a_3 - a_1 < a_4$. Therefore, $i\ge 4$. 
It follows that $v(g')\ge a_4 - a_1$, that is, $g'\in t^{a_4 - a_1}\ol{R}$. Hence, $g=c_0 + g' \in R + t^{a_n - a_1} \ol{R}$.

(d-2): Set $s=a_2-a_1$. By the assumptions, $a_3=2a_2-a_1=a_1 +2s$ and $a_3+2s=2a_3-a_1<a_4$. %$a_3<a_1 +3s<a_1 +4s=2a_3-a_1<a_4$.
Hence, we obtain that 
\begin{align}\label{Hform}
%a_2=a_1 + s, \quad a_3=a_1 +2s, \quad a_1 +3s\not\in H, \quad \text{and} \quad a_1 +4s\not\in H.
a_2=a_1 + s, \quad a_3=a_1 +2s, \quad \text{and} \quad a_4-a_3\ge 2s+1.
\end{align}
Set $f_1=t^{a_1} + \alpha t^{a_1+2s}$ and $f_2=t^{a_1+s} + \beta t^{a_1+2s}$. Then $R:I=(R:f_1)\cap (R:f_2) \cap \ol{R}$ by Lemma \ref{rem4.2}. Let $g\in R:I$, and write $g=c_0+c_1t + c_2 t^2 + \cdots$, where $c_i\in k$. Then,
%\begin{align}
%f_1g=&(t^{a_1} + \alpha t^{a_1+2s})(c_0+c_1t + c_2 t^2 + \cdots)\\
%f_2g=&(t^{a_1+s} + \beta t^{a_1+2s})(c_0+c_1t + c_2 t^2 + \cdots).
%\end{align}
%Hence, 
for all $x\ge a_1 + 2s$, we obtain that 
\begin{align}\label{coef}
\begin{split}
\text{(the  $f_1g$'s coefficient of degree $x$)} &= \text{$c_{x-a_1} + \alpha c_{x-(a_1+2s)}$}\\ 
\text{(the  $f_2g$'s coefficient of degree $x$)} &= \text{$c_{x-(a_1+s)} + \beta c_{x-(a_1+2s)}$}.
\end{split}
\end{align}

Here, suppose that $x, x+s\not\in H$. By \eqref{coef}, we obtain that
\begin{align}\label{eq1}
c_{x-a_1} + \alpha c_{x-(a_1+2s)}&=0\\ \label{eq2}
c_{x+s-a_1} + \alpha c_{x+s-(a_1+2s)}&=0\\ \label{eq3}
c_{x-(a_1+s)} + \beta c_{x-(a_1+2s)}&=0\\ \label{eq4}
c_{x+s-(a_1+s)} + \beta c_{x+s-(a_1+2s)}&=0.
\end{align}
By \eqref{eq1}, \eqref{eq4}, and \eqref{eq3}, we have 
\begin{align}\label{eq5}
-\alpha c_{x-a_1-2s}=c_{x-a_1} = -\beta c_{x-a_1-s} = \beta^2 c_{x-a_1-2s}.
\end{align}
Therefore, since we assume that $\alpha\ne -\beta^2$, we obtain that $c_{x-a_1-2s}=0$. It follows that 
\begin{align}\label{eqfinal}
c_{x-a_1+s}=c_{x-a_1}=c_{x-a_1-s}=c_{x-a_1-2s}=0
\end{align}
by \eqref{eq1}-\eqref{eq4}. That is, if $x, x+s\not\in H$, then we have \eqref{eqfinal}. 

On the other hand, $x, x+s\not\in H$ holds for all $a_3+1\le x \le a_4-s-1$. Note that the number of (consecutive) integers between $a_3+1$ and $a_4-s-1$ is $a_4-s-1-a_3\ge s$ by \eqref{Hform}. Therefore, the fact that \eqref{eqfinal} holds for all $x=a_3+1, \dots, a_4-s-1$ turns out that 
\[
c_{(a_3+1)-a_1-2s}=\cdots =c_{(a_4-s-1)-a_1+s}=0.
\]
By noting that $(a_3+1)-a_1-2s=1$ and $(a_4-s-1)-a_1+s=a_4-a_1-1$ due to \eqref{Hform}, we obtain that 
\[
g=c_0 + c_{a_4-a_1} t^{a_4-a_1} + c_{a_4-a_1+1} t^{a_4-a_1+1} + \cdots \in R+t^{a_4-a_1}\ol{R}.
\]
Therefore, by combining this result with Lemma \ref{lem4.2}, $R:(R:I)=R:(R+t^{a_4-a_1}\ol{R})=I_1$.

(d-3): Suppose that $a_1 + a_3 = 2a_2$ and $\alpha = -\beta^2$. Set $s=a_2-a_1$. Note that we have \eqref{Hform}. Hence, 
\begin{align*}
(t^{a_1}, t^{a_2}) + \fkc=&(t^{a_1}, t^{a_1+s}) + \fkc = (t^{a_1} - \beta t^{a_1+s}, t^{a_1+s}) + \fkc \\
\cong &  (1+\beta t^s)[(t^{a_1} - \beta t^{a_1+s}, t^{a_1+s}) + \fkc]\\
%=&((1+\beta t^s)(t^{a_1} - \beta t^{a_1+s}), (1+\beta t^s)t^{a_1+s}) + (1+\beta t^s)\fkc \\
=& (t^{a_1} - \beta^2 t^{a_1+2s}, t^{a_1+s}+\beta t^{a_1+2s}) + (1+\beta t^s)\fkc \\
=& (t^{a_1} +\alpha t^{a_1+2s}, t^{a_1+s}+\beta t^{a_1+2s})  + \fkc \\
=& I.
\end{align*}
\end{proof}

By Claims \ref{claim2} and \ref{claim3}, in the cases (d) and (e), a reflexive ideal $I$ is isomorphic to some monomial ideal containing $\fkc$, respectively. Therefore, for each cases (a)-(e), $I$ is isomorphic to some monomial ideal containing $\fkc$.
\end{proof}

\begin{ex}\label{ex76}
	Let $e\ge 5$ be an integer and set $R=k[[t^e, t^{e+1}, t^{e+2}, t^i \mid 2e+5 \le i \le 3e-1]]$, a numerical semigroup ring over an infinite field $k$. Then $\T(R)$ is infinite, but $\Ref_1(R)$ is finite.
\end{ex}

\begin{proof}
	This is the case where $n=4$, $a_1=e$, $a_2=e+1$, $a_3=e+2$, and $a_4=2e$. It follows that $2a_3-a_1=e+4<2e=a_4$ and $a_2-a_1+a_3=e+3<2e=a_4$.
	Hence, the conclusion follows from Corollary \ref{cor32} and Theorem \ref{thm4.1}.
\end{proof}

We note one of the easiest examples arising from Example \ref{ex76}.

\begin{ex}
Let $R=k[[t^5, t^6, t^7]]$ be a numerical semigroup ring over an infinite field $k$. Then $\T(R)$ is infinite, but $\Ref_1(R)$ is finite.
\end{ex}

%\begin{proof}
%This is the case that $n=4$ and $a_1=5$, $a_2=6$, $a_3=7$, $a_4=10$. Hence, the conclusion follows from Theorems \ref{t24} and \ref{thm4.1}.
%\end{proof}

\begin{ex}\label{xxx}
	Let $e\ge 4$ be an integer and set $R=k[[t^e, t^{e+1}, t^{2e-2}, t^i \mid 2e+3 \le i \le 3e-3]]$, a numerical semigroup ring over an infinite field $k$. Then $\Ref_1(R)$ (and hence $\T(R)$) is infinite.
\end{ex}

\begin{proof}
	This is the case where $n=4$, $a_1=e$, $a_2=e+1$, $a_3=2e-2$, and $a_4=2e$. It follows that $2a_3-a_1=3e-4\ge 2e=a_4$ since $e\ge 4$. We also have $a_2-a_1+a_3=2e-1<2e=a_4$. 
	Hence, the conclusion follows from Corollary \ref{cor32} and Theorem \ref{thm4.1}.
\end{proof}

%%%%%%%%%%%%%%%%%%%%%%%%%%%%%%%%%%%%%%%%%%%%%%%%%%%%%%%%%%%%%%%%%%%%%%%%%%%%

%%%%%%%%%%%%%%%%%%%%%%%%%%%%%%%%%%%%%%%%%%%%%%%%%%%%%%%%

%%%%%%%%%%%%%%%%%%%%%%%%%%%%%%%%%%%%%%%%%%%%%%%%%%%%%%%%
\end{document}